\documentclass[a4paper,10pt]{article}

\def\expensiveFigures{1}
\def\longtitle{Efficient presolving methods for the influence maximization problem}

\newcommand{\cmipoptauthors}{%
	Wei-Kun Chen
}

\makeatletter
\DeclareRobustCommand*{\escapeus}[1]{%
	\begingroup\@activeus\scantokens{#1\endinput}\endgroup}
\begingroup\lccode`\~=`\_\relax
\lowercase{\endgroup\def\@activeus{\catcode`\_=\active \let~\_}}
\makeatother
\usepackage{setspace}
\usepackage[figuresright]{rotating}
\usepackage[utf8]{inputenc}
\usepackage[font=small, labelfont=bf, margin=1cm]{caption}
\usepackage{amsmath,amsfonts,amssymb}
\usepackage{tabularx,booktabs}
\usepackage{url}
\usepackage{makecell}
\usepackage{setspace}
\usepackage{amsmath}
\usepackage{graphicx}
\usepackage{epstopdf}
\usepackage[textsize=small,textwidth=3.3cm]{todonotes}

\def\keywordname{{\bfseries Keywords}}%
\def\keywords#1{\par\addvspace\medskipamount{\rightskip=0pt plus1cm
		\def\and{\ifhmode\unskip\nobreak\fi\ $\cdot$
		}\noindent\keywordname\enspace\ignorespaces#1\par}}

\def\MSCname{{\bfseries Mathematics Subject Classification}}%
\def\MSC#1{\par\addvspace\medskipamount{\rightskip=0pt plus1cm
		\def\and{\ifhmode\unskip\nobreak\fi\ $\cdot$
		}\noindent\MSCname\enspace\ignorespaces#1\par}}

\usepackage{xspace}
\usepackage{etex}
\usepackage{enumitem}
\usepackage{dsfont}
\usepackage{mathtools}
\usepackage[boxed,ruled,linesnumbered]{algorithm2e}

\SetCommentSty{AlgoCommentStyle}
\usepackage{changes}
\usepackage{longtable}
\usepackage{tabularx}

\usepackage{geometry}
\usepackage{amsthm}
\usepackage{multirow}

\usepackage[breaklinks,colorlinks,citecolor=blue,linkcolor=blue,hyperfootnotes=false,
pdftitle={\longtitle},
pdfauthor={\cmipoptauthors},
pdfsubject={\longtitle}]{hyperref}

\usepackage[capitalize]{cleveref}
\usepackage[numbers]{natbib}

\usepackage{tikz}
\usepackage{tikz-3dplot}
\usepackage{tikzscale}
\usetikzlibrary{arrows}
\usetikzlibrary{shapes}
\usetikzlibrary{snakes}
\usetikzlibrary{patterns}
\usetikzlibrary{backgrounds,topaths}
\usetikzlibrary{arrows,decorations.markings,automata}

\usepackage{pgfplots}
\usepackage{subcaption}
\usepackage{textcomp}
\usepgfplotslibrary{groupplots}
\usepgfplotslibrary{units}

\usepackage{graphicx}
\usepackage{microtype}
\newcommand{\maxlength}{\mathbf{MaxReacSize}}

\newcommand{\st}{\,:\,}

\newcommand{\G}{\mathcal{G}}
\newcommand{\V}{\mathcal{V}}

\newcommand{\A}{\mathcal{A}}
\newcommand{\R}{\mathcal{R}}

%\newcommand{\R}{\mathds{R}}

 %used in the SCIP-SDP part

% performance tables
\usepackage{siunitx}
\sisetup{add-decimal-zero,
	round-precision=2,
	round-mode=places,
	detect-all}

%{$\geq$\,#1}

\usepackage{xparse}
\ExplSyntaxOn
\def\myround#1{\num{\fp_eval:n {round(#1, 2)}}}
\ExplSyntaxOff

% colors for the version to version benchmark
\definecolor{c1}{HTML}{000060}
\definecolor{c2}{HTML}{0000FF}
\definecolor{c3}{HTML}{36648B}
\definecolor{c4}{HTML}{4682B4}
\definecolor{c5}{HTML}{5CACEE}
\definecolor{c6}{HTML}{00FFFF}
\definecolor{c7}{HTML}{008888}
\definecolor{c8}{HTML}{00DD99}
\definecolor{c9}{HTML}{527B10}
\definecolor{c10}{HTML}{7BC618}
\definecolor{c11}{HTML}{33AA00}

% properties for MINLP performance diagram

\definecolor{scipoldcol}{HTML}{36648B}
\definecolor{scipnewcol}{HTML}{7BC618}

% components of the SCIP Optimization Suite

 % camel case according to Thorsten

% general macros

% other solvers / modeling environments

% other software

% benchmark libraries

% for presolving levels

% variable bound graph

\definecolor{darkgreen}{HTML}{008800}

% heuristics

% decomposition based dual simplex

% \renewcommand{\span}{\text{span}}

 %used in PolySCIP section
 %used in PolySCIP section
 %used in PolySCIP section
 %used in PolySCIP section

% symmetry

% theorems
\theoremstyle{plain}

% enumerate/itemize settings
%\setlist[itemize]{leftmargin=3ex,parsep=0ex,topsep=0.5ex}

\setlist[itemize]{leftmargin=3.45ex}
\setlist[itemize,1]{label=$\bullet$,itemsep=0ex,topsep=0.9ex}
\setlist[itemize,2]{label=$\cdot$,topsep=0.5ex,leftmargin=2.75ex}
\setlist[enumerate]{leftmargin=3ex,itemsep=0.1ex,parsep=1ex,topsep=0.9ex}

\definecolor{tabcolor}{HTML}{6666AA}
\definecolor{f1}{HTML}{000060}
\definecolor{f2}{HTML}{0000FF}
\definecolor{f3}{HTML}{36648B}
\definecolor{f4}{HTML}{4682B4}
\definecolor{f5}{HTML}{5CACEE}
\definecolor{f6}{HTML}{00FFFF}
\definecolor{f7}{HTML}{00DD99}
\definecolor{f8}{HTML}{008888}
\definecolor{f9}{HTML}{000000}

% floats for code snippets
\usepackage{float}
\newfloat{program}{thp}{lop}
\floatname{program}{Program}
\crefname{program}{program}{programs}

% disable annoyingly expensive graphics for faster compilation
\newcommand{\inputExpensiveFigure}[1]{
	\ifthenelse{\expensiveFigures = 1}{\input{#1}}{}
}

% style of title page
\usepackage{titling}
\setlength\droptitle{-2em}
\pretitle{\begin{center}\linespread{1.05}\Large}
	\posttitle{\par\end{center}\vskip 0.5em}
\preauthor{\begin{center}\linespread{1.15}\large \lineskip 0.6em\begin{tabular}[t]{c}}
		\postauthor{\end{tabular}\par\end{center}\vskip 0.6em}
\predate{\begin{center}}
	\postdate{\par\end{center}}
\setlength\thanksmarkwidth{.75em}
\setlength\thanksmargin{0em}
\thanksmarkseries{arabic}

% style of headings
\usepackage[]{titlesec}
\usepackage{etoolbox}
\makeatletter
\patchcmd{\ttlh@hang}{\parindent\z@}{\parindent\z@\leavevmode}{}{}
\patchcmd{\ttlh@hang}{\noindent}{}{}{}
\makeatother
\titleformat{\section}
{\normalfont\large\bfseries}{\thesection}{0.9ex}{}
\titleformat{\subsection}
{\normalfont\normalsize\bfseries}{\thesubsection}{0.9ex}{}
\titleformat{\subsubsection}
{\normalfont\normalsize\upshape}{\thesubsubsection}{0.9ex}{}
\titleformat{\paragraph}[runin]
{\normalfont\normalsize\itshape}{\theparagraph}{1em}{}
\titleformat{\subparagraph}[runin]
{\normalfont\normalsize\itshape}{\theparagraph}{1em}{}
\titlespacing*{\section}     {0pt}{21dd plus 8pt minus 4pt}{10.5dd}
\titlespacing*{\subsection}   {0pt}{21dd plus 8pt minus 4pt}{10.5dd}
\titlespacing*{\subsubsection}{0pt}{19dd plus 8pt minus 4pt}{10.5dd}
\titlespacing*{\paragraph}   {0pt}{13pt plus 8pt minus 4pt}{1em}
\titlespacing*{\subparagraph}   {0pt}{13pt plus 8pt minus 4pt}{1em}

% paper-specific packages
\usepackage{supertabular}

% paper-specific macros
\newcommand{\bz}{{\boldsymbol{z}}}

% abbreviations

\newtheorem{theorem}{Theorem}[section]

\newtheorem{remark}[theorem]{Remark}
\newtheorem{observation}[theorem]{Observation}
\newtheorem{proposition}[theorem]{Proposition}

\newtheorem{lemma}[theorem]{Lemma}

\newcommand{\rev}[1]{{\color{black}#1}}

\newcommand{\revv}[1]{{#1}}

\newcommand{\Default}{{\texttt{Default}}\xspace}
\newcommand{\SCNA}{{\texttt{SCNA}}\xspace}
\newcommand{\INA}{{\texttt{SCNA+INA}}}

\newcommand{\sharpS}{{\texttt{\#S}}\xspace}
\newcommand{\Time}{{\texttt{T}}\xspace}
\newcommand{\Gap}{{\texttt{Gap}}\xspace}
\newcommand{\sharpN}{{\texttt{\#}{\texttt{N}}}\xspace}
\newcommand{\sharpC}{{\texttt{\#}{\texttt{C}}}\xspace}
\newcommand{\PTime}{{\texttt{PT}}\xspace}
\newcommand{\SepaTime}{{\texttt{ST}}\xspace}
\newcommand{\DeltaZ}{{\Delta\texttt{Z}}}
\newcommand{\DeltaR}{{\Delta\texttt{R}}}
\newcommand{\DeltaV}{{\Delta\texttt{V}}}
\newcommand{\DeltaA}{{\Delta\texttt{A}}}
\newcommand{\DeltaZINA}{{\Delta\texttt{Z}_\texttt{INA}}}
\newcommand{\DeltaRINA}{{\Delta\texttt{R}_\texttt{INA}}}
\newcommand{\DeltaZSCNA}{{\Delta\texttt{Z}_\texttt{SCNA}}}
\newcommand{\DeltaRSCNA}{{\Delta\texttt{R}_\texttt{SCNA}}}
\newcommand{\Mem}{{\texttt{M}}}
\newcommand{\TINA}{{\texttt{T}_\texttt{INA}}}

\begin{document}
	
	%\ifthenelse{\guidelines = 1}{\input{guidelines}}{}
	
	%%%%%%%%%%%%%%%%%%%%%%%%%%%%%%%%%%%%%%%%%%%%%%%%%%%%%%%%%%%%%%%%%%%%%%%%
	\title{\longtitle}
	%%%%%%%%%%%%%%%%%%%%%%%%%%%%%%%%%%%%%%%%%%%%%%%%%%%%%%%%%%%%%%%%%%%%%%%%
	
	\author{
		Sheng-Jie Chen\thanks{\itshape Academy of Mathematics and Systems Science, Chinese Academy of Sciences, Beijing 100190, China; School of Mathematical Sciences, University of Chinese Academy of Sciences, Beijing 100049, China, \nolinkurl{{shengjie_chen,dyh}@lsec.cc.ac.cn}\vspace*{0.5ex}}~,\hspace{3pt}
		Wei-Kun Chen\thanks{\itshape School of Mathematics and Statistics/Beijing Key Laboratory on MCAACI, Beijing Institute of Technology, Beijing 100081, China, \nolinkurl{chenweikun@bit.edu.cn}\vspace*{0.5ex}}
		\href{https://orcid.org/0000-0003-4147-1346}{\hspace{3pt}\includegraphics[width=10pt]{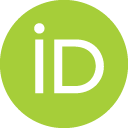}\hspace{3pt}},\\
		Yu-Hong Dai\thanksmark{1}	\href{https://orcid.org/0000-0002-6932-9512}{\hspace{3pt}\includegraphics[width=10pt]{ORCIDiD_icon128x128.png}},
		Jian-Hua Yuan\thanks{\itshape School of Science, Beijing University of Posts and Telecommunications, Beijing 100876, China, \nolinkurl{{jianhuayuan,houshan_zhang}@bupt.edu.cn}}\,,~
		Hou-Shan Zhang\thanksmark{3}
	}
	
	\date{\today}

	\newgeometry{left=38mm,right=38mm,top=35mm}

	\maketitle
	
	\begin{abstract}
		We consider the influence maximization problem (IMP) 
		which asks for identifying a limited number of key individuals to spread influence in a \revv{network} such that the expected number of influenced individuals is maximized.
		The stochastic maximal covering location problem (SMCLP) formulation is a mixed integer programming formulation that effectively approximates the IMP by the Monte-Carlo sampling.
		%However, for IMPs with a large-scale network or a large number of samplings, the SMCLP formulation cannot be solved efficiently by existing exact algorithms due to its large problem size.  
		For IMPs with a large-scale network or a large number of samplings, however, the SMCLP formulation cannot be efficiently solved by existing exact algorithms due to its large problem size.  
		In this paper, we \revv{attempt to develop} presolving methods to reduce the problem size and hence enhance the capability of employing exact algorithms in solving large-scale IMPs.
		In particular, we propose two effective presolving methods, called strongly connected nodes aggregation (SCNA) and isomorphic nodes aggregation (INA), respectively.
		The SCNA enables to build a new SMCLP formulation that is potentially much more compact than the existing one, and the INA further eliminates variables and constraints in the SMCLP formulation.
		A theoretical analysis on two special cases of the IMP is provided to demonstrate the strength of the SCNA and INA in reducing the problem size of the SMCLP formulation.
		\revv{We} integrate the proposed presolving methods, SCNA and INA, into the Benders decomposition algorithm, which is recognized as one of the state-of-the-art exact algorithms for solving the IMP.
		\revv{We show that the proposed SCNA and INA provide the possibility to develop a much faster separation algorithm for the Benders cuts}. 
		Numerical results demonstrate that with the SCNA and INA, the Benders decomposition algorithm is much more effective in solving the IMP in terms of solution time.
		
		\keywords{Benders decomposition \and Influence maximization \and Integer programming \and Presolving methods  \and Stochastic programming}
		\MSC{90C10  \and 90C15}
		
	\end{abstract}

	%%%%%%%%%%%%%%%%%%%%%%%%%%%%%%%%%%%%%%%%%%%%%%%%%%%%%%%%%%%%%%%%%%%%%%%%
	\section{Introduction}
	\label{sect:introduction}
	%\todo[inline]{SC@WC: After reviewing other related papers, I have found many descriptions using ``influence maximization problem under (the) ICM/LTM''.
	%Besides, in our paper, we use ``experiments for the ICM/LTM'' as the subsection title, and ``for the ICM/LTM'' when we do some settings and analysis in numerical results.
	%About this point, I notice that there are indeed some (relatively fewer) articles using ``experiments for (the) ICM/LTM''.\\
	%WC@SC: I have unified the style.}
	Nowadays, with the popularity of online social network sites such as Facebook, Instagram, and Twitter, 
	propagation of influence in social networks has received more and more attention.
	Promotion of products, ideas, and specific behavior patterns can all be viewed as propagation of influence.
	In practice, influence spreads among individuals through the so-called ``word-of-mouth'' exchanges.
	{Individuals with more social connections can be seen as more influential, 
		which means they are more likely to exert influence on others.}
	In this setting, one related optimization problem, called the \emph{influence maximization problem} (IMP), 
	is to select a limited number of key individuals as a seed set, denoted as $\mathcal{S}$, to trigger a spread process in the social network 
	such that the expected number of influenced individuals is maximized after the spread.
	Mathematically, the IMP can be written as
	\begin{equation}\label{originalIMP}
		\max_{\mathcal{S}\subseteq\mathcal{V},~|\mathcal{S}|\leq K} \sigma(\mathcal{S}),
	\end{equation}
	where $\mathcal{V}$ is the set of individuals in the social network, $\mathcal{S}\subseteq\mathcal{V}$ ($|\mathcal{S}|\leq K\in \rev{\mathbb{Z}_{++}}$) is the seed set of key individuals that need to be identified, and $\sigma(\mathcal{S})$ is the influence function 
	measuring the expected number of individuals in the social network that can be influenced by the individuals in the seed set $\mathcal{S}$.

	\revv{The IMP plays a crucial role in various social network applications, such as viral marketing \cite{Chen2010,domingos2001mining} and rumor control \cite{budak2011limiting,he2012influence}. 
		%the spread of health information in people [R10]. 
		For a comprehensive overview of this topic, we refer to \cite{R1Chen2013}.  
		Aside from the applications in social networks, similar concepts have been  investigated in other areas, including the spread of epidemics \cite{R3borrero2021scalable,R2dreyer2009irreversible}, network monitoring \cite{Leskovec2007}, habitat conservation \cite{R4sheldon2012maximizing}, and analysis of protein-interaction networks \cite{R5jo2016influence}.}
	Online \revv{network} tools enable \revv{collection of} a huge number of individuals and a huge amount of
	information about the network structures, and hence provide good opportunities to address these problems. 
	However, they also lead to large-scale \revv{networks} (with
	millions/billions of nodes and arcs), presenting new challenges \revv{in solving} large-scale IMPs.
	Therefore, \revv{development of} efficient algorithms to obtain a high-quality solution for large-scale
	IMPs is greatly needed.
	
	%
	% 
	%%%%%%%%%%%%%%%%%%%%%%%%%%%%%%%%%%%%%%%%%%%%%%%%%%%%%%%%%%%%%%%%%%%%%%%%
	\subsection{Literature review}
	Kempe et al. \cite{Kempe2003} first proposed the discrete optimization problem formulation \eqref{originalIMP} for the IMP.
	Depending on different influence diffusion processes, they introduced two fundamental influence propagation models for the IMP: the {\emph{independent cascade model}} (ICM) and the {\emph{linear threshold model}} (LTM). 
	They showed that under both the ICM and LTM, the IMP is NP-hard, indicating that achieving an optimal solution of the IMP is challenging for the large-scale cases.
	By proving that the influence function $\sigma(\mathcal{S})$ is monotone and submodular, they were able to design a greedy algorithm, which starts with $\mathcal{S}=\varnothing$ and iteratively adds the individual with maximal marginal gain, 
	with an approximation ratio of $(1-1/e)$ (here $e$ denotes the base of the natural logarithm).
	Unfortunately, as shown in \cite{Chen2010,Chen2010scalable}, given a fixed seed set $\mathcal{S}$, it is \texttt{\#}P-hard to compute $\sigma(\mathcal{S})$ exactly.
	\rev{Therefore, Kempe et al. \cite{Kempe2003} proposed Monte-Carlo sampling, which provides a subset of equiprobable scenarios, of a reasonable size, to estimate $\sigma(\mathcal{S})$ (each scenario is represented by a \emph{live-arc graph}).}
	Following \cite{Kempe2003}, many researchers focused on \revv{the improvement of} the greedy algorithm and \revv{the development of} other heuristic algorithms for solving the IMP. 
	Specifically, Leskovec et al. \cite{Leskovec2007} utilized the submodularity of $\sigma(\mathcal{S})$ and presented an improved greedy algorithm called  \emph{cost-effective lazy forward}.
	According to their numerical results, their method is almost 700 times faster than the basic greedy algorithm of Kempe et al. \cite{Kempe2003}. 
	Chen et al. \cite{HEP} proposed another algorithmic enhancement for the greedy algorithm which reduces the graph searching time on computing \revv{the marginal gain (achieved by adding an individual into a seed set)}.
	Furthermore, they developed a much more efficient algorithm, called  \emph{degree discount}, for the IMP under the ICM that nearly matches the performance of the greedy algorithm.
	The \emph{two-phase influence maximization} \cite{Tang2014} and the \emph{influence maximization via martingales} \cite{Tang2015} heuristic algorithms also deserve special attention.
	They can not only guarantee an approximation ratio of  $(1-1/e)$, but also enable to solve large-scale IMPs in nearly linear time.
	We refer to \cite{Chen2010,Cheng2013,Galhotra2016,Kimura2006} for more greedy or heuristic algorithms for solving the IMP and \cite{Li2018} for a detailed comparison among different heuristic algorithms.
	
	Most heuristic algorithms find a suboptimal solution for the IMP with some worst case  guarantees.
	However, in some applications, it is crucial to identify an optimal solution instead of just a suboptimal one; see \cite{Guney2019}.
	As a result, using exact algorithms to solve the IMP has attracted more and more attention recently.
	{In most exact algorithms, a \emph{mixed integer programming} (MIP) formulation is established.}
	In particular, Wu and K{\"u}{\c{c}}{\"u}kyavuz \cite{Wu2017} transferred the IMP into the so-called \emph{two-stage stochastic submodular MIP model} and proposed a \emph{delayed constraint generation} algorithm to solve the problem to optimality.
	The computational results indicate that their algorithm is more efficient than the basic greedy algorithm in \cite{Kempe2003}, especially when $K$ is large.
	\rev{Given a collection of scenarios $\Omega$, G\"uney \cite{Guney2019}, G\"uney et al. \cite{Guney2020}, and Li et al. \cite{Li2019} formulated the IMP as a \emph{stochastic maximal covering location problem} (SMCLP) with $\mathcal{O}(|\mathcal{V}||\Omega|)$ variables and linear constraints.}
	%
	%The SMCLP can be solved to global optimality using standard MIP solvers.
	% 
	Following this line, G\"{u}ney et al. \cite{Guney2020} developed a reformulation of the SMCLP and proposed a {\emph{Benders decomposition}} (BD) algorithm.
	Their experiment results show that the BD algorithm outperforms the one in \cite{Wu2017} by several orders of magnitude in terms of solution time.
	We refer to \cite{Fischetti2018,Gunnec2020,Kahr2020,kahr2022impact,Reghavan2019} for employing exact algorithms in solving several variants of the IMP.

	However, due to the NP-hardness of the IMP, the above exact algorithms are still inefficient, especially when the size of the \revv{network} or the number of scenarios is large. 
	\emph{Presolving} \cite{Gurobipresolve} is an appealing strategy to address this issue. 
	It removes redundant information and strengthens the model formulation with the aim of improving the performance of the subsequent solution procedure (e.g., the branch-and-cut or the BD approach).
	Indeed, presolving has been recognized as a standard routine of the state-of-the-art MIP solvers.
	%(e.g., CPLEX \cite{Cplex} and GUROBI \cite{Gurobi}).
	%
	For problems with specific structures, developing customized presolving methods is often much more effective; see \cite{Borndorfer1998,Church2003,Heinz2013,Ljubic2012presolve} for using customized presolving methods to solve various problems. 
	In terms of the IMP, few articles are devoted to \revv{the development of} customized presolving methods.
	To the best of our knowledge, only two simple presolving methods have been developed in the literature \cite{Guney2020,Kahr2020} and they have been proved to be beneficial to solving the IMP in certain cases.
	\rev{Consequently, it is crucial to develop more customized presolving methods to further enhance the capability of using exact algorithms to solve large-scale IMPs.}

	\subsection{Contributions and outline}
	In this paper, we attempt to develop more presolving methods based on the SMCLP formulation to improve the solution efficiency for solving the IMP. 
	The main contributions of this paper are summarized as follows.
	\begin{itemize}
		\item 
		By exploiting the problem structure of the IMP, we propose two new presolving methods including (i) the \emph{strongly connected nodes aggregation} (SCNA) which aggregates nodes in each \emph{strongly connected component} (SCC), in a given live-arc graph, into a single virtual node; 
		and (ii) the \emph{isomorphic nodes aggregation} (INA), which extends the above idea to using \emph{isomorphic} nodes among different live-arc graphs for aggregation (two nodes in different live-arc graphs are called isomorphic if the nodes that can influence them are identical in the corresponding live-arc graphs).
		We show that the proposed presolving methods can effectively reduce the problem size of the SMCLP formulation. 
		In particular, after applying the SCNA, the IMP can be built on new live-arc graphs obtained by aggregating all SCCs in the original live-arc graphs, leading to a potentially much smaller SMCLP formulation.
		%	\todo[inline]{SC@WC: More INA description here? e.g., add ``Based on this, the INA could further detect the redundant information among different scenarios and reduce more variables and constraints.''.\\
		%	WC@SC: good suggestion. I shall think about this later.}
		\item To demonstrate the strength of the proposed SCNA and INA in reducing the problem size of the SMCLP formulation, we provide a theoretical analysis on two special cases of the IMP: 
		one is built on the one-way bipartite \revv{network} under the LTM
		and the other one is built on the complete \revv{network} under the ICM.
		%
		%		In both the two cases, we demonstrate the reduction effectiveness of the proposed SCNA and INA.
		%prove that the problem size in the reduced SMCLP formulation depends only on the graph size but not on the number of scenarios.
		For the first one, we give upper bounds, which are  linear with the size of the \revv{network} but independent of the number of \revv{scenarios}, for the numbers of variables and constraints in the reduced SMCLP formulation (obtained by applying the proposed presolving methods).
		For the second one, we provide a lower bound for the probability that after applying the SCNA and INA, there are only $|\mathcal{V}|+1$ variables and two linear constraints in the reduced SMCLP formulation.
		We show that such a probability can tend to one under certain conditions. 
		%		that with increasing the size of graph, the number of constraints in the reduced SMCLP formulation converges to $2$ in probability.
		%The two special cases could provide insights on more general graphs which are dense and large or contain many one-way bipartite subgraphs.
		%Furthermore, the linear programming (LP) relaxation is shown to be tight and the problem is proved to be strongly polynomial-time solvable.
		%	\item {\color{red}{
		%	We study a generalization of the IMP, in which, more generalized objective function and constraints are considered.
		%	We show that the proposed SCNA and INA can be extended and apply to this generalization.}}
		%\todo[inline]{Think about where to put this sentences. Maybe we not need to add these discussions here.\\
		%SC@WC: Ok.\\
		%SC@WC: I think we might only keep the sentence in the last paragraph of the section as this generalization might not be our main contributions.}
		\item We integrate the SCNA and INA into the BD algorithm \cite{Guney2020}, which is recognized as one of the state-of-the-art exact algorithms to solve the IMP.
		We show that the proposed SCNA and INA provide the possibility to develop a much faster separation algorithm for the Benders cuts, as compared with the one in \cite{Guney2020}.
		\item Extensive numerical results on real-world \revv{networks} demonstrate that (i) the proposed SCNA and INA are quite effective in reducing the problem size of the SMCLP formulation; 
		(ii) 
		when integrating them into the BD algorithm, they can effectively speed up the solution procedure of the IMP.
	\end{itemize}

	The remainder of the paper is organized as follows. Section \ref{sect:pd} briefly reviews two fundamental influence propagation models (ICM and LTM) and the SMCLP formulation for the IMP.
	Section \ref{sect:presolving} presents the SCNA and INA and \revv{Section} \ref{sect:analysis} further shows their theoretical strength in reducing the problem size of the SMCLP formulation. 
	Section \ref{sect: al} describes the implementation of the \revv{INA} and the integration of \revv{the SCNA and INA} with the BD algorithm.
	Section \ref{sect: CR} provides the computational results.
	Section \ref{sect: EGP} studies a generalization of the IMP and shows that the proposed SCNA and INA can also be applied under some realistic conditions.
	Finally, \revv{Section} \ref{Sect:conclusions} gives some concluding remarks.

	\section{Propagation models and problem formulation}
	\label{sect:pd}
	In this section, we briefly review the propagation models and the SMCLP formulation for the IMP \cite{Guney2019,Guney2020,Li2019}.
	We use a directed graph $\mathcal{G}=(\mathcal{V},\mathcal{A})$ to refer to a \revv{network}, 
	in which a node $i \in \mathcal{V}$ represents the individual involved in the influence spread,
	and an arc $(i,j)\in\mathcal{A}$ represents that individual $i$ has the potential ability to influence (or activate) individual $j$. 
	The spread of influence in a given \revv{network} $\mathcal{G}$ needs to obey certain propagation rules.
	In \cite{Kempe2003}, Kempe et al. provided the following two fundamental influence propagation models called ICM and LTM. 
	\begin{itemize}
		\item 
		In the ICM, each arc $(i,j)\in\mathcal{A}$ is assigned an activation probability $\pi_{ij}$. The propagation process starts with a given seed set $\mathcal{S}$.
		If node $i$ has been activated at the beginning of step $t$, then during step $t$, it has a single chance to activate its (inactive) neighbor node $j$ with probability $\pi_{ij}$ independently. 
		\rev{If the activation is unsuccessful, node $i$ has no chance to influence node $j$ any more.}
		%	\todo[inline]{SC@WC: For each activated node, it does not try to influence its neighbors with a single chance during each step $t$.
		%	Actually, it only has one chance during the whole spread process.
		%	I suggest to using the following sentence here.\\
		%	`If the attempt is unsuccessful, node $i$ has no chance to influence node $j$ any more.''\\
		%	The sentence mainly comes from paper \cite{Guney2019}.}
		%
		Besides, for those nodes that are successfully activated during step $t$, they will remain active and attempt to activate their inactive neighbor nodes during step $t+1$. 
		When no more inactive nodes are activated, the diffusion process is terminated. %
		\item In the LTM, each arc $(i,j)\in\mathcal{A}$ is associated with a predefined weight $ b_{ij}\geq 0$ satisfying $\sum_{i\st(i,j)\in\mathcal{A}}b_{ij}\leq1$ for all $j \in \mathcal{V}$.   
		\revv{In addition}, each node \revv{$j\in\mathcal{V}$} selects a threshold value \revv{$\theta_j$}  randomly chosen from $[0,1]$ before the propagation process.
		Let $\mathcal{S}_t$ denote the activated nodes set at the beginning of step $t$ ($\mathcal{S}_0:=\mathcal{S}$ is the seed set). 
		Then an inactive node $j\in\mathcal{V}$ can be activated during step $t$ if and only if $\sum_{i\in \mathcal{S}_t}b_{ij}\geq\theta_j$, i.e., the total contribution of its active neighbors' influence weights exceeds its threshold value $\theta_j$. 
		In analogy to the ICM, if \revv{a node} is successfully activated during some step, it will remain active during the following steps, and the entire propagation process stops until no more nodes can be activated.
	\end{itemize}
	\rev{Given a seed set $\mathcal{S}$, the results (i.e., the distributions of influenced nodes) returned by the ICM and LTM can be different \cite{R1Chen2013}.
		For a comparison on the performance of the two models in different applications, we refer to \cite{akrouf2013social,li2017}.}
	
	Although the influence spread under the ICM or LTM is a stochastic process, Kempe et al. \cite{Kempe2003} showed that it can be equivalently converted into a (discrete) deterministic process. 
	More specifically, let $\Omega$ be the set of all possible scenarios of influence spread. Each scenario $\omega\in\Omega$ corresponds to a subgraph $\mathcal{G}^{\omega}=(\mathcal{V},\mathcal{A}^{\omega})$ of $\mathcal{G}$ (called a {live-arc} graph) with a probability $p^{\omega}$~(satisfying $\sum_{\omega\in\Omega}p^{\omega}=1$).
	Here arc $(i,j)\in\mathcal{A}^{\omega}$ indicates that in scenario $\omega$, if node $i$ is activated during the influence spread, then node $j$ must be activated by it.
	Let $\sigma^{\omega}(\mathcal{S})$ represent the number of  activated nodes in $\mathcal{G}^{\omega}$.
	Then, \revv{$\sigma^{\omega}(\mathcal{S})=|\left\{i\in\mathcal{V}\,:\, \text{there~exists~a~directed~path~in~graph~$\mathcal{G}^{\omega}$~from~a~node~in}~\mathcal{S}~\text{to~node}~i\right\}|$}. 
	%\todo[inline]{SC: I suggest to change the above $i$ in the definition of $\sigma^\omega(\mathcal{S})$ to ``$j$''.}
	%
	The influence function $\sigma(\mathcal{S})$ can equivalently be calculated by
	\begin{equation}\label{triggermodel}
		\sigma(\mathcal{S})=\sum_{\omega\in\Omega}p^{\omega}\sigma^{\omega}(\mathcal{S}).
	\end{equation}
	%
	%Kempe et al. \cite{Kempe2003} showed that the IMPs under both ICM and LTM can be represented via live-arc graphs. 
	We next discuss the computation of the probability $p^\omega$ of each scenario $\omega$ and the number of scenarios under the ICM or LTM.
	Under the ICM,  to construct a live-arc graph $\mathcal{G}^{\omega}$, each arc $(i,j) \in \mathcal{A}$ is independently determined to be live with probability $\pi_{ij}$.
	Hence, the probability of $\mathcal{G}^{\omega}$ is $p^{\omega}=\prod_{(i,j)\in\mathcal{A}^{\omega}}\pi_{ij}\prod_{(i,j)\in\mathcal{A}\backslash\mathcal{A}^{\omega}}(1-\pi_{ij})$ and the number of all possible live-arc graphs is $2^{|\mathcal{A}|}$.
	Under the LTM, for each node $j\in \mathcal{V}$, we select at most \revv{one incoming arc $(i,j)$ in $\G$} to be live with probability $b_{ij}$, and do not select any arc with the probability $1-\sum_{i\,:\,(i,j)\in\mathcal{A}}b_{ij}$.
	As a result, (i) each live-arc graph $\mathcal{G}^{\omega}$ has a probability $p^{\omega}=\prod_{j\in\mathcal{V}}I_j$, where $I_j:=b_{ij}$ if  $(i,j)\in\mathcal{A}^{\omega}$; 
	and $I_j :=1-\sum_{i\,:\,(i,j)\in\mathcal{A}}b_{ij}$, otherwise; and (ii) the number of all possible live-arc graphs is $\prod_{\revv{j} \in \mathcal{V}} \revv{(n_j+1)}$, where $\rev{n_j}$ denotes the number of incoming arcs of node \revv{$j$} in graph $\mathcal{G}$.
	It is worthwhile remarking that the stochastic IMP under the ICM or LTM  is equivalent to the IMP constructed via a finite number of live-arc graphs (in the sense that the distributions of nodes influenced by a given seed set are equivalent); see Kempe et al. \cite{Kempe2003}.

	Given a \revv{network} $\mathcal{G}$ and a set of scenarios $\Omega$, we next review the SMCLP formulation for the IMP \cite{Guney2019,Guney2020,Li2019}.
	First, for each scenario $\omega\in\Omega$ and node $i\in\mathcal{V}$, we denote $\mathcal{R}(\mathcal{G}^{\omega},i)$ as the reachability set of nodes that can activate node $i$ in live-arc graph $\mathcal{G}^{\omega}$ (i.e., \revv{$\mathcal{R}(\mathcal{G}^{\omega},i)=\{j\in\mathcal{V}\,:\,\text{there~exists~a~directed~path~in~graph~$\mathcal{G}^{\omega}$~from~node~}j~\text{to~node}~i\}$}).
	Then, for each $\omega\in\Omega$ and $i\in\mathcal{V}$, we introduce binary variables $y_i$ and $z_i^{\omega}$ to denote whether node $i$ is selected as a seed node and whether node $i$ can be activated in scenario $\omega$, respectively, i.e., 
	\begin{equation*}
		\begin{aligned}
			y_i =&\left\{\begin{array}{ll}1,
				& {\text{if~node}}~ i\in\mathcal{V} ~{\text{is chosen as a seed node}};~\\
				0,& {\text{otherwise;}}\end{array}\right.\\
			z_i^{\omega} =&\left\{\begin{array}{ll}1,
				& {\text{if~node}}~ i\in\mathcal{V} ~{\text{can be activated in scenario }}\omega\in\Omega;~\\
				0,& {\text{otherwise}}.\end{array}\right.
		\end{aligned}
	\end{equation*}
	Using the above notations, the authors in \cite{Guney2019,Guney2020,Li2019} formulated the IMP as the following SMCLP:
	\begin{subequations}\label{IMP}
		\begin{align}
			\max_{\boldsymbol{y},\,\boldsymbol{z}}\ & \sum_{\omega\in\Omega}p^{\omega}\sum_{i\in\mathcal{V}}z_i^{\omega} \label{maxspread}\\
			\hbox{s.t.}\ & \sum_{j\in \mathcal{R}(\mathcal{G}^{\omega},i)} y_j\geq z_i^{\omega},&&\forall ~\omega\in\Omega,~\forall ~i\in \mathcal{V}, \label{connectioncons} \\
			& \sum_{j\in \mathcal{V}}y_j \leq K, &&\label{budgetcons}\\
			& y_j \in\{0,1\} \label{ybincons}, &&  \forall ~j\in \mathcal{V},\\
			& z_i^{\omega} \in \{0,1\} \label{zbincons}, && \forall~ \omega\in\Omega,~\forall~ i\in \mathcal{V}.
		\end{align}
	\end{subequations}
	In this formulation, the objective function \eqref{maxspread} maximizes the expected number of influenced nodes in the \revv{network} $\mathcal{G}$.
	Reachability constraints \eqref{connectioncons} indicate that if node $i$ in scenario $\omega$  can be activated, then at least one of the nodes in its reachability set $\mathcal{R}(\mathcal{G}^{\omega},i)$ is chosen as a seed node. 
	Constraint \eqref{budgetcons} limits the cardinality of the set of seed nodes up to $K$. 
	Finally, constraints \eqref{ybincons} and \eqref{zbincons} restrict variables $\boldsymbol{y}$ and $\boldsymbol{z}$ to be binary.
	
	Unfortunately, formulation \eqref{IMP} is computationally intractable for the network with realistic dimensions due to the huge number of scenarios (for the IMP under the ICM and LTM, the numbers of all possible scenarios are both exponential).
	Hence, the Monte-Carlo sampling approach is often used to approximate the influence diffusion process in which a reasonable size of equiprobable scenarios set $\Omega'\subseteq \Omega$ is generated, 
	and the objective function in \eqref{maxspread} is replaced by $\sum_{\omega\in \Omega'}p'_\omega\sum_{i\in \mathcal{V}}z_i^\omega $ where $p'_\omega = 1/|\Omega'|$ and hence $\sum_{\omega\in \Omega'}p'_\omega=1$ \cite{Guney2019,Wu2017}.
	The rationale behind this is that from the approximation result in \cite{Kleywegt2002}, the probability of obtaining an optimal solution of the IMP \eqref{IMP} by solving the sampling version of the IMP converges to one exponentially fast as $|\Omega'| \rightarrow \infty$. 
	In practice, however, the selection of \revv{the number of scenarios} $|\Omega'|$ is crucial for the approximation quality of the sampling version of the IMP. 
	In general, the larger the $|\Omega'|$, the smaller the approximation error is. 
	We refer to \revv{Section 4.6 of \cite{Guney2019}  and Section 5.2 of \cite{Kahr2020}} for the empirical studies of the effect of \revv{the number of scenarios} on the approximation quality of the sampling version of the IMP \revv{and a variant of the IMP}, respectively. 
	Here we also want to highlight that the problem size of the sampling version of the IMP also grows linearly with the number of scenarios $|\Omega'|$ (as both numbers of variables and constraints are $\mathcal{O}(|\mathcal{V}||\Omega'|)$).
	This further makes it difficult to solve the problem by standard MIP solvers or the BD approach in \cite{Guney2020}, 
	especially when the size of graph $\mathcal{G}$ is also large.
	In the next section, we shall resolve this difficulty by proposing two new presolving methods to reduce the problem size of the SMCLP formulation \eqref{IMP}.
	
	In the remaining of this paper, we will consider the sampling version of the IMP.
	For simplicity of notations, we continue to use $\Omega$ and $p^{\omega}$ to represent the set of sampling scenarios and the probability of occurrence of \rev{sampling} scenario $\omega$, respectively.
	%	a large $|\Omega'|$ also leads to a large problem size of the sampling version of the IMP.
	%	Indeed, the (indeed, both numbers of variables and constraints are $\mathcal{O}(|\Omega||\mathcal{V}|)$)
	%From the approximation result in \cite{Kleywegt2002}, the probability of obtaining the optimal solution of the IMP \eqref{IMP} by solving the sampling version of the IMP converges to one exponentially fast as $|\Omega'| \rightarrow \infty$. 
	%Therefore, it can be expected that with the increasing number of scenarios $|\Omega'|$, the approximation error tends to be zero.
	
	%
	
	%However, the problem size of formulation \eqref{IMP} is still too large.
	%Indeed, both numbers of variables and constraints are $\mathcal{O}(|\Omega||\mathcal{V}|)$,
	%making it difficult to solve the problem by standard MIP solvers or the BD approach in \cite{Guney2020}, especially when the number of scenarios $|\Omega|$ or the sizes of graphs $\mathcal{G}^\omega$ is large.
	%%
	%In the next section, we shall resolve this difficulty by proposing two new presolving methods to reduce the problem size.
	
	\section{Two presolving methods}\label{sect:presolving}
	In this section, by exploiting the problem structure of formulation \eqref{IMP}, we propose two presolving methods to reduce the problem size of formulation \eqref{IMP}.
	Specifically, \revv{Section} \ref{sect: SCNA} studies the SCNA which aggregates the nodes in each SCC, in a given live-arc graph, into a single node, and \revv{Section} \ref{sect: INA} investigates the INA which extends the idea of the SCNA to applying isomorphic nodes aggregations among different live-arc graphs.

	\subsection{Strongly connected nodes aggregation}\label{sect: SCNA}
	\begin{figure}[H]
		\centering
		\includegraphics[width=2.1in,height=1.3in]{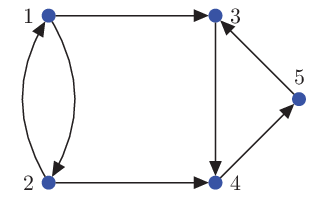}
		\caption{The example live-arc graph which includes two SCCs \{1,2\} and \{3,4,5\}.}
		\label{fig:SCNATikz1}
	\end{figure}

	In this subsection, we present a presolving method by considering the SCCs in a given live-arc graph.
	To begin with, we consider the example live-arc graph (corresponds to some scenario $\omega\in\Omega$) in Figure \ref{fig:SCNATikz1}.
	In this graph, there exists a directed path (an arc) from node $1$ to node $2$, 
	and as a result, if node $1$ is activated by some seed node, node $2$ can also be activated.
	Conversely, the fact that there exists a directed path (an arc) from node $2$ to node $1$ implies that if node $2$ is activated by some seed node, node $1$ can also be activated.
	This means that either (i) nodes $1$ and $2$ are simultaneously activated  by some seed node; or (ii) neither of them can be activated.
	Consequently, $z_1^\omega = z_2^\omega$ must hold in formulation \eqref{IMP}.
	This reveals some redundancy in formulation \eqref{IMP} as it uses two variables $z_1^\omega$ and $z_2^\omega$ 
	and two constraints in \eqref{connectioncons} without considering $z_1^\omega = z_2^\omega$ .
	Indeed, to simplify the problem formulation, we can remove variable $z_1^\omega$  and its corresponding reachability constraint in \eqref{connectioncons} and add the objective coefficient of variable $z_1^\omega$ into that of variable $z_2^\omega$.
	
	In general, for any given two strongly connected nodes $i_1,~i_2\in\mathcal{V}$ in a live-arc graph  $\mathcal{G}^\omega$ (i.e., there exists a directed path from node $i_1$ to node $i_2$ in $\mathcal{G}^{\omega}$ and vice versa), 
	we can remove one of the two variables and the corresponding reachability constraint in \eqref{connectioncons} from formulation \eqref{IMP}.
	%
	%In general, we have the following statement.
	%\begin{proposition}[\bf{strongly connected nodes aggregation}]
	%	\label{sccobservation}
	%	For any given two nodes $i_1,~i_2\in\mathcal{V}$ and a scenario $\omega\in\Omega$, if $i_1$ and $i_2$ are strongly connected in live-arc graph $\mathcal{G}^{\omega}$ (i.e., there is a path from node $i_1$ to node $i_2$ in $\mathcal{G}^{\omega}$ and vice versa), then setting $z_{i_1}^{\omega}=z_{i_2}^{\omega}$ in formulation \eqref{IMP} does not change the optimal value.
	%\end{proposition}
	%According to Proposition \ref{sccobservation}, we can remove variable $z^\omega_{i_2}$ (or $z^\omega_{i_1}$) and the associated constraint in \eqref{connectioncons}, and add the objective coefficient of variable $z_{i_2}^\omega$ (or $z^\omega_{i_1}$) into that of variable $z_{i_1}^\omega$ (or $z^\omega_{i_2}$) in the objective function.
	%
	Notice that for a given SCC in a live-arc graph, as all of its nodes are strongly connected, we can recursively apply the above argument until there remains only a single variable and a single constraint in \eqref{connectioncons} associated with this SCC.
	This provides us with the following presolving method.
	\begin{itemize}
		\item []
		{\bf SCNA}. For each live-arc graph $\mathcal{G}^\omega$, let $\{\mathcal{SC}_u^\omega\}_{u=1}^{\revv{s_\omega}}$, $\revv{s_\omega} \in \mathbb{Z}_{++}$, be all its SCCs. 
		For each SCC $\mathcal{SC}_u^\omega$, variables $z_{\revv{i}}^{\omega}$, $\revv{i} \in \mathcal{SC}_u^\omega$, 
		are first substituted by a new variable $z_u^\omega$ with its objective coefficient being $p^\omega |\mathcal{SC}_u^\omega|$.
		Then, all but one of constraints in \eqref{connectioncons} associated with variable $z_u^\omega$ are removed from the SMCLP formulation \eqref{IMP}.
	\end{itemize}
	To implement the SCNA, we only need to identify all SCCs in all live-arc graphs $\mathcal{G}^{\omega}$, $\omega \in \Omega$.
	For each graph $\mathcal{G}^\omega$, this can be done in linear time  $\mathcal{O}(|\mathcal{V}|+|\mathcal{A}^{\omega}|)$ using, e.g., the Kosaraju-Sharir's algorithm \cite{Sharir}.
	Consequently, the overall complexity to implement the SCNA for formulation \eqref{IMP} is $\mathcal{O}(\sum_{\omega\in \Omega}(|\mathcal{V}|+|\mathcal{A}^\omega|))
	$.
	%
	%After identifying all SCCs, we aggregate the nodes in each SCC into a (virtual) single node and transform live-arc graph $\mathcal{G}^\omega$ into the compact and directed acyclic live-arc graph $\bar{\mathcal{G}}^{\omega}=(\bar{\mathcal{V}}^{\omega},\bar{\mathcal{A}}^{\omega})$, which leads to the reduced formulation \eqref{strongIMP}.
	%%
	%In the following, we shall concentrate on the algorithm to detect the reductions of the INA.
	
	%The above presolving method is called SCNA in the following.
	
	\begin{figure}[H]
		\centering
		\includegraphics[width=2.2in,height=0.36in]{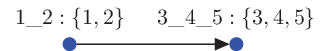}
		\caption{The compact live-arc graph obtained by aggregating each SCC into a single node of the graph in Figure \ref{fig:SCNATikz1}. Notice that there are only two nodes $1\_2$~(corresponds to SCC $\{1,2\}$) and $3\_4\_5$ (corresponds to SCC $\{3,4,5\}$) and one arc between these two nodes.}
		\label{fig:SCNATikz2}
	\end{figure}
	
	After applying the SCNA, the IMP can be equivalently constructed based on a new set of live-arc graphs, which are potentially much more compact than the original live-arc graphs.
	To be more specific, by aggregating each SCC of $\mathcal{G}^{\omega}$ into a single node with the weight being the size of the SCC, we can get a  directed acyclic graph, denoted as $\bar{\mathcal{G}}^{\omega}=(\bar{\mathcal{V}}^{\omega},\bar{\mathcal{A}}^{\omega})$.
	Each node $u$ in $\bar{\mathcal{V}}^{\omega}$ represents a distinct SCC in the original live-arc graph $\mathcal{G}^\omega$ and each arc $(u,v) \in \bar{\mathcal{A}}^{\omega}$ denotes that there exists an arc $(i,j)\in\A^{\omega}$ with $i\in\mathcal{SC}_u^{\omega}$ and $j\in\mathcal{SC}_v^{\omega}$ in the original live-arc graph $\mathcal{G}^\omega$
	(see Figure \ref{fig:SCNATikz2} for an example of this transformation of the graph in Figure \ref{fig:SCNATikz1}).
	%For notational purpose, let $\mathcal{SC}_v^{\omega}$ denote the SCC in $\mathcal{G}^{\omega}$ which is associated with node $v\in\bar{\mathcal{V}}^{\omega}$. 
	As the reachability sets of the nodes inside a given SCC $\mathcal{SC}_u^{\omega}$ are identical, we use the notation $\mathcal{R}(\mathcal{G}^\omega,\mathcal{SC}_u^{\omega}) $ to represent the reachability set of this SCC, which is equal to each $\mathcal{R}(\mathcal{G}^\omega,\revv{i}) $, $\revv{i} \in \mathcal{SC}_u^{\omega} $.
	It follows immediately that 
	\begin{equation}
		\label{relation_reachset}
		\mathcal{R}(\mathcal{G}^\omega,\mathcal{SC}_u^{\omega}) = \bigcup_{v \in \mathcal{R}(\bar{\mathcal{G}}^\omega, u)}\mathcal{SC}_v^\omega,
	\end{equation} 
	where $\mathcal{R}(\bar{\mathcal{G}}^\omega, u)$ is the reachability set of node $u$ in live-arc graph $\bar{\mathcal{G}}^\omega$.
	Then constraints \eqref{connectioncons} reduce to 
	\begin{equation}
		\label{strongconnectioncons}
		\sum_{v\in\mathcal{R}(\bar{\mathcal{G}}^{\omega},u)}y(\mathcal{SC}_v^{\omega})\geq z_u^{\omega},~\forall~ \omega\in\Omega, ~\forall~ u\in\bar{\mathcal{V}}^{\omega}, 
	\end{equation}
	where $y(\mathcal{SC}_v^{\omega}) := \sum_{j \in \mathcal{SC}_v^{\omega}} y_j$.
	%
	%For notational convenience, for each $\omega\in\Omega$ and $k\in\bar{\mathcal{V}}^{\omega}$, we denote $\sum_{j\in\mathcal{SC}_k^{\omega}}y_j$ as $I_k^{\omega}(\boldsymbol{y})$.
	%where $\mathcal{SC}_v^{\omega}$ is the SCC (in the original live-arc graph $\mathcal{G}^\omega$) corresponding to node $v$ (in the compact live-arc graph $\mathcal{\bar{G}}^\omega$).
	%s
	Based on the above notations, the reduced SMCLP formulation after applying the SCNA can be written as
	\begin{equation}\label{strongIMP}
		\begin{aligned}
			\max_{\boldsymbol{y},\,\boldsymbol{z}}\ & \sum_{\omega\in\Omega}p^{\omega}\sum_{u\in\bar{\mathcal{V}}^{\omega}}|\mathcal{SC}_u^{\omega}|z_{u}^{\omega} \\
			\hbox{s.t.}\ & 
			\eqref{budgetcons},~\eqref{ybincons},~\eqref{strongconnectioncons},\\
			& z_{u}^{\omega} \in \{0,1\} , && \forall ~\omega\in\Omega, ~\forall~ u\in \bar{\mathcal{V}}^{\omega}.
		\end{aligned}
	\end{equation}
	It is worthwhile remarking that the numbers of variables $z_u^\omega$ and the corresponding reachability constraints in the reduced formulation are equal to $\sum_{\omega \in \Omega} |\mathcal{\bar{V}}^\omega|$ (which is the number of the SCCs in the original live-arc graphs).
	This can be potentially much smaller than those in formulation \eqref{IMP}, especially for the case where the numbers of SCCs are much smaller than the numbers of nodes in the  live-arc graphs. 
	As a result, it can be expected that solving formulation \eqref{strongIMP}  is much more efficient than solving formulation \eqref{IMP}.
	In addition, the fact that formulation \eqref{strongIMP} is built on the (potentially) compact and directed acyclic live-arc graphs \revv{plays an important role} in improving the performance of the BD algorithm (see \revv{Section \ref{sect: bdi}} further ahead).

	\subsection{Isomorphic nodes aggregation}\label{sect: INA}
	The SCNA performs reductions on two nodes $i_1$ and $i_2$ in a given live-arc graph $\mathcal{G}^\omega$ where nodes $i_1$ and $i_2$ are strongly connected, or equivalently, the reachability sets of nodes $i_1$ and $i_2$ are identical, i.e.,  $\mathcal{R}(\mathcal{G}^{\omega},i_1)=\mathcal{R}(\mathcal{G}^{\omega},i_2)$.
	In this subsection, we concentrate on \revv{the extension of} the result to the {isomorphic} nodes among different live-arc graphs. 
	As it has been previously mentioned, two nodes $i_1$ and $i_2$ in two different live-arc graphs $\mathcal{G}^\omega$ and $\mathcal{G}^\eta$ are called isomorphic if their reachability sets are identical, i.e.,
	\begin{equation}\label{SCequalcondi_equation}
		\mathcal{R}(\mathcal{G}^{\omega},i_1)=\mathcal{R}(\mathcal{G}^{\eta},i_2).
	\end{equation}
	
	We begin with the following observation stating that the values of variables $\boldsymbol{z}$ are determined by the values of variables $\boldsymbol{y}$ in formulation \eqref{IMP}.
	\begin{observation}\label{projection_on_z}
		There must exist an optimal solution $(\bar{\boldsymbol{y}},\bar{\boldsymbol{z}})$ of formulation \eqref{IMP} such that
		\begin{equation}
			\bar{z}_i^{\omega}=\min\left\{1,\sum\limits_{j\in\mathcal{R}(\mathcal{G}^{\omega},i)}\bar{y}_j\right\},~ \forall~ \omega \in \Omega,~\forall~ i\in\mathcal{V}.~\label{projection_on_z_equation}
		\end{equation}
	\end{observation}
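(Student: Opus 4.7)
The plan is to take an arbitrary optimal solution $(\boldsymbol{y}^*, \boldsymbol{z}^*)$ of formulation \eqref{IMP} and modify only its $\boldsymbol{z}$-component so that the resulting pair $(\bar{\boldsymbol{y}}, \bar{\boldsymbol{z}})$ remains optimal and satisfies \eqref{projection_on_z_equation}. Concretely, I would set $\bar{\boldsymbol{y}} := \boldsymbol{y}^*$ and define $\bar{z}_i^{\omega} := \min\{1, \sum_{j \in \mathcal{R}(\mathcal{G}^{\omega}, i)} \bar{y}_j\}$ for every $\omega \in \Omega$ and every $i \in \mathcal{V}$, which is exactly the value required by \eqref{projection_on_z_equation}. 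The proof then splits into two routine verifications: feasibility and optimality.

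For feasibility, I would check the four constraints of \eqref{IMP} in turn. Constraints \eqref{budgetcons} and \eqref{ybincons} are inherited verbatim from $\boldsymbol{y}^*$. The reachability constraint \eqref{connectioncons} holds by construction, since $\bar{z}_i^{\omega} \leq \sum_{j \in \mathcal{R}(\mathcal{G}^{\omega}, i)} \bar{y}_j$ follows directly from the definition of the minimum. The binarity condition \eqref{zbincons} is the only point that uses slightly more than the definition: because $\bar{y}_j \in \{0,1\}$, the sum $\sum_{j \in \mathcal{R}(\mathcal{G}^{\omega}, i)} \bar{y}_j$ is a non-negative integer, and hence $\min\{1, \cdot\}$ of it lies in $\{0,1\}$.

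For optimality, I would argue that $\bar{z}_i^{\omega} \geq z_i^{*,\omega}$ holds pointwise. Indeed, because $(\boldsymbol{y}^*, \boldsymbol{z}^*)$ is feasible for \eqref{IMP}, both $z_i^{*,\omega} \leq 1$ and $z_i^{*,\omega} \leq \sum_{j \in \mathcal{R}(\mathcal{G}^{\omega}, i)} y_j^* = \sum_{j \in \mathcal{R}(\mathcal{G}^{\omega}, i)} \bar{y}_j$ hold, so $z_i^{*,\omega} \leq \min\{1, \sum_{j \in \mathcal{R}(\mathcal{G}^{\omega}, i)} \bar{y}_j\} = \bar{z}_i^{\omega}$. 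Since $p^{\omega} \geq 0$, summing gives $\sum_{\omega \in \Omega} p^{\omega} \sum_{i \in \mathcal{V}} \bar{z}_i^{\omega} \geq \sum_{\omega \in \Omega} p^{\omega} \sum_{i \in \mathcal{V}} z_i^{*,\omega}$, and combined with the optimality of $(\boldsymbol{y}^*, \boldsymbol{z}^*)$ this forces $(\bar{\boldsymbol{y}}, \bar{\boldsymbol{z}})$ to be optimal as well.

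There is no genuine obstacle here; the observation is essentially the remark that, in a pure maximization objective on $\boldsymbol{z}$, each $z_i^{\omega}$ can be pushed to its largest value compatible with \eqref{connectioncons} and \eqref{zbincons}, namely $\min\{1, \sum_{j \in \mathcal{R}(\mathcal{G}^{\omega}, i)} y_j\}$. The only subtlety to watch for is that this largest value happens to be integral, which is what allows us to stay inside the feasible region \eqref{zbincons}; this in turn relies crucially on $\boldsymbol{y}$ being binary and would fail if $\boldsymbol{y}$ were relaxed to $[0,1]$.
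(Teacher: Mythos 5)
Your proof is correct and complete; the paper itself states this as an unproved observation, and the argument it later gives for the generalization (Proposition~\ref{theorem: EPG}) is the same idea---push each $z_i^{\omega}$ up to its maximal feasible value using the monotonicity of the objective in $\boldsymbol{z}$---executed there by recursively flipping one coordinate at a time rather than by your one-shot construction. Your closing remark correctly identifies the only delicate point, namely that integrality of $\boldsymbol{\bar{y}}$ is what keeps $\min\bigl\{1,\sum_{j\in\mathcal{R}(\mathcal{G}^{\omega},i)}\bar{y}_j\bigr\}$ inside $\{0,1\}$ so that constraint \eqref{zbincons} is respected.
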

	\noindent In particular, if node $i\in\mathcal{V}$ does not have any incoming arc in $\mathcal{G}^{\omega}$ for some $\omega\in\Omega$, 
	i.e., $\mathcal{R}(\G^\omega , i)=\{i\}$ is a singleton,
	the associated reachability constraint in \eqref{connectioncons} reduces to $y_i\geq z_i^{\omega}$. By Observation \ref{projection_on_z}, we can get $z_i^{\omega}=\min\{1,y_i\}=y_i$.
	As a result, we can perform a reduction on formulation \eqref{IMP} by aggregating $z_i^\omega:=y_i$ and removing the associated constraint in \eqref{connectioncons}.
	We call this reduction the \emph{singleton node aggregation} (SNA).
	Indeed, this is exactly the ``P2'' presolving method proposed in G\"uney et al. \cite{Guney2020}. 
	
	We next use Observation \ref{projection_on_z} to derive the INA.
	Let $i_1$ and $i_2$ be two isomorphic nodes in two different live-arc graphs $\mathcal{G}^\omega$ and $\mathcal{G}^\eta$.
	By Observation \ref{projection_on_z}, 
	there must exist an optimal solution $(\boldsymbol{\bar{y}},\boldsymbol{\bar{z}})$ such that
	$$\bar{z}_{i_1}^{\omega} = \min\left \{1, \sum_{j\in \mathcal{R}(\mathcal{G}^{\omega},i_1)}\bar{y}_j\right \}~~\text {and}~~\bar{z}_{i_2}^{\eta} = \min\left\{1, 
	\sum_{j\in \mathcal{R}(\mathcal{G}^{\eta},i_2)}\bar{y}_j\right\}.$$
	By \eqref{SCequalcondi_equation}, we have $\bar{z}_{i_1}^{\omega} = \bar{z}_{i_2}^{\eta}$.
	This implies that setting $z_{i_1}^{\omega}:=z_{i_2}^{\eta}$ in formulation \eqref{IMP} does not change its optimal value.
	Consequently, we have the following presolving method.
	\begin{itemize}
		\item [] {\bf INA}. If nodes $i_1$ and $i_2$  in two different live-arc graphs $\mathcal{G}^\omega$ and $\mathcal{G}^\eta$ are isomorphic,  variable $z_{i_1}^{\omega}$ can be replaced by variable $z_{i_2}^{\eta}$  and  constraint $
		\sum_{j\in \mathcal{R}(\mathcal{G}^{\omega},i_1)}y_j\geq z_{i_1}^{\omega}
		$  can be removed
		from formulation \eqref{IMP}.
	\end{itemize}
	The SCNA can be regarded as a special case of the INA, which is restricted to aggregating the isomorphic (strongly connected) nodes inside each live-arc graph.
	However, as it has been discussed in \revv{Section} \ref{sect: SCNA}, implementing the SCNA can be done in $\mathcal{O}(\sum_{\omega\in \Omega}(|\mathcal{V}|+|\mathcal{A}^\omega|))$, 
	which is much faster than that of implementing the INA.
	The latter requires to check whether or not condition \eqref{SCequalcondi_equation} holds for all 4-tuples $(\omega,\eta,i_1,i_2 )$ with an overall complexity of $\mathcal{O}({|\Omega|}^2{|\mathcal{V}|^3})$.
	%\end{remark}
	This shows that, to implement the INA, it is better to first implement the SCNA, and then detect isomorphic nodes among different scenarios based on the compact formulation \eqref{strongIMP}  (in \revv{Section} \ref{subsect: im_pre}, we shall provide a fast heuristic algorithm for implementing the INA).
	After applying the INA on formulation \eqref{strongIMP}, the formulation of the IMP can be presented as follows:
	\begin{equation}\label{IMP_INA}
		\begin{aligned}
			\max_{\boldsymbol{y},\,\boldsymbol{z}}\ & \sum_{\omega\in\Omega}\sum_{u\in\tilde{\mathcal{V}}^{\omega}}f_u^{\omega}z_u^{\omega} \\
			\hbox{s.t.}\ & 	\eqref{budgetcons},~\eqref{ybincons},&&\\
			& \sum_{v\in\mathcal{R}(\bar{\mathcal{G}}^{\omega},u)}y(\mathcal{SC}_v^{\omega})\geq z_u^{\omega}, &&\forall~ \omega\in\Omega, ~\forall~ u\in\tilde{\mathcal{V}}^{\omega}, \\
			& z_{u}^{\omega} \in \{0,1\}, && \forall~ \omega\in\Omega, ~\forall~ u\in \tilde{\mathcal{V}}^{\omega},
		\end{aligned}
	\end{equation}
	where $\tilde{\mathcal{V}}^\omega$ denotes the set of nodes in $\bar{\mathcal{G}}^\omega$ that are not aggregated by \revv{some other nodes in other live-arc graphs} and $f_u^\omega$  denotes the objective coefficient of variable $z_{u}^\omega$ after applying the INA.
	For simplicity, for two isomorphic nodes $i_1$ and $i_2$ in two different live-arc graphs $\mathcal{G}^\omega$ and $\mathcal{G}^\eta$, we aggregated variable $z_{i_1}^\omega$ by  variable $z_{i_2}^\eta$ if $\omega > \eta$.
	It is worthwhile to highlight that, with the increasing number of scenarios $|\Omega|$, a node in a live-arc graph $\bar{\mathcal{G}}^\omega$ is more likely to be aggregated by other isomorphic nodes (in other live-arc graphs), and 
	consequently,  $|\tilde{\mathcal{V}}^\omega|$ in formulation \eqref{IMP_INA} tends to be smaller.  
	
	\begin{remark}
		The SNA can be used to further simplify formulation \eqref{IMP_INA}. In particular, if $\bigcup_{v\in\mathcal{R}(\bar{\mathcal{G}}^{\omega},u)}\mathcal{SC}^{\omega}_v=\{j\}$,  we can aggregate $z_u^\omega:=y_j$ and remove the corresponding reachability constraint in \eqref{IMP_INA}.
		%
		%For simplicity of presentation, we omit this simplification. 
	\end{remark}
	
	\section{Theoretical analysis}\label{sect:analysis}

	In this section, we demonstrate the strength of the proposed SCNA and INA in reducing the problem size of the SMCLP formulation \eqref{IMP} by analyzing two special cases of the IMP.
	In particular, for the first case where the IMP is built upon a one-way bipartite \revv{network} under the LTM, 
	we \revv{provide} upper bounds, 
	which are linear with the size of the \revv{network} but independent of the number of scenarios,
	for the numbers of variables and constraints in the reduced SMCLP formulation (obtained by applying the proposed presolving methods).
	%after applying the SNA and INA, the problem size in the reduced formulation \eqref{IMP_INA} is linear with the numbers of nodes and arcs, while independent of the number of scenarios.
	\rev{For the second case where the IMP is built upon a complete  \revv{network} under the ICM, we provide a lower bound for the probability that after applying the SCNA and INA, there are only $|\mathcal{V}|+1$ variables and two linear constraints in the reduced SMCLP formulation \eqref{IMP_INA}. Such a probability can tend to one under certain conditions.}
	%These analysis results shed useful insights on general graphs, especially when the graph is dense and large or contains lots of special subgraphs.
	%\todo[inline]{WC@SC: We do not show `` contains lots of special subgraphs.'' indeed.}

	\subsection{One-way bipartite \revv{network} under the LTM}\label{subsect:obg}
	{
		A one-way bipartite \revv{network} is a bipartite graph in which all arcs are from one side (the source nodes) to another side (the target nodes). 
		%	\rev{which plays an important role in many applications.}
		%	%The IMP on the one-way bipartite graph arises in many applications.
		One-way bipartite \revv{networks} also arise from several applications \cite{alon2012optimizing,berardo2014,ERGUN2002,hatano2016adaptive,soma2014optimal}.
		For example, the authors in \cite{alon2012optimizing,hatano2016adaptive,soma2014optimal} considered one of the major decisions in a marketing plan that deals with the allocation of a given budget among media channels (i.e., source nodes) in order to maximize the influence on a set of potential customers (i.e., target nodes), which can be characterized by a one-way bipartite \revv{network}.
		\rev{Other applications on the one-way bipartite \revv{networks}} include, e.g., the human sexual contact network in \cite{ERGUN2002} (where the nodes denote the groups of two different
		genders and the arcs denote the sexual connections between males and females) and the collaboration
		network in \cite{berardo2014} (the source and target nodes denote  the organizations and the projects, respectively, and an arc between organization $i$ and project $j$ denotes that organization $i$ participates in project $j$).
		\rev{Due to the simple structure, some theoretical properties on the influence propagation in one-way bipartite \revv{networks} have also been established.
			In particular, it is possible to compute the exact influence coverage $\sigma(S)$ by a dynamic programming procedure; see \cite{Wu2019,Zhang2014}.}
		%		the influence propagation in bipartite social network has been a subject of intensive studies. 
		%	Many of the recently works consider the one-way bipartite graph as a main part of their algorithmic and analytical results; 
		%	see, e.g., \cite{Wu2019,Zhang2014}.
		We remark that most existing works assume that the influence spreads from the source nodes to the target nodes.
		In this subsection, we consider the generalized case where the source nodes and target nodes can also exert influence to themselves. 
		In the following, we consider the IMP built upon such a one-way bipartite \revv{network} under the LTM.
		%We use this special case to show the strength of the proposed INA for the IMP under the LTM.
		
		Let $\mathcal{G}_{\rm{B}}=(\mathcal{M}\cup \mathcal{N},\mathcal{A})$ be a given one-way bipartite graph.
		All arcs in $\mathcal{A}$ are from the source nodes set $\mathcal{M}$ to the target nodes set $\mathcal{N}$.
		For each scenario $\omega\in\Omega$, we denote its live-arc graph as $\mathcal{G}_{\rm{B}}^{\omega}=(\mathcal{M}\cup \mathcal{N},\mathcal{A}^{\omega})$. 
		Notice that under the LTM, in each live-arc graph $\mathcal{G}_{\rm{B}}^\omega$, each node $i$, $i \in \mathcal{M}$, does not have any incoming arc and each node $j$, $j \in \mathcal{N}$, has at most one incoming arc.
		Therefore, under the LTM, the reachability set of a node $i\in \mathcal{M}$ is $\mathcal{R}(\mathcal{G}_{\rm{B}}^\omega, i) = \{i\}$, and the reachability set of a node $j\in \mathcal{N}$ is
		\begin{equation}\label{obg_property}
			\begin{aligned}
				\mathcal{R}(\mathcal{G}_{\rm{B}}^{\omega},j) =\left\{
				\begin{array}{ll}
					%\{i\},& {\text{if}}~\deg^{-}(\mathcal{G}^{\omega},i)=0;\\
					\{j\},& {\text{if node}}~j~\text{does not have any incoming arc in $\G_{\rm{B}}^{\omega}$};\\[3pt]
					\{i,j\},& {\text{if there exists some node}}~i\in \mathcal{M}~\text{such that}~(i,j)\in\mathcal{A}^{\omega}.\\
				\end{array}\right.
			\end{aligned}
		\end{equation}
		Next, for each node $i \in \mathcal{M} 
		\cup \mathcal{N}$, we define a set of scenarios
		\begin{equation}
			\begin{aligned}
				& \Omega(i):=
				\left\{\omega\in\Omega \,:\, \mathcal{R}(\mathcal{G}_{\rm{B}}^{\omega},i)=\{i\}\right\},\\
			\end{aligned}
		\end{equation}
		and for each arc $(i,j) \in \mathcal{A}$, we define another set of scenarios 
		\begin{equation}
			\Omega(i,j):=\left\{\omega\in\Omega\,:\,\mathcal{R}(\mathcal{G}_{\rm{B}}^{\omega},j)=\{i,j\}\right\}.
		\end{equation}
		By definition, it follows that
		\revv{
			\begin{equation}
				\label{omegarelation}
				\Omega=\left\{
				\begin{aligned}
					&\Omega (i),  && ~\text{for}~i \in \mathcal{M};\\
					&  \left (\bigcup_{i\in \mathcal{M}}\Omega (i,j)\right ) \cup \Omega (j), &&   ~\text{for}~j \in \mathcal{N}.
				\end{aligned}\right.
			\end{equation}
		}%
		Then, we have the followings:
		\begin{itemize}
			\item [(i)]
			by applying the SNA for each $i \in \mathcal{M}\cup \mathcal{N}$, we can aggregate  $z_i^\omega:=y_i $ for all $\omega \in \Omega(i)$ and remove the corresponding  constraints in \eqref{connectioncons};
			\item [(ii)]
			by applying the INA for each $(i,j) \in \mathcal{A}':=\left\{(i,j)\in\mathcal{A}\,:\,\Omega(i,j)\neq\varnothing\right\}$, we can aggregate all variables $z_j^{\omega}$, $\omega\in\Omega(i,j)$, into a single variable, denoted as $z_{ij}$, and remove the redundant constraints in \eqref{connectioncons}.
		\end{itemize}
		As a result, the reduced formulation is given by
		\begin{subequations}\label{IMP_obg_presolve}
			\begin{align}
				\max_{\boldsymbol{y},\,\boldsymbol{z}}\ & \sum_{i\in\mathcal{M}\cup \mathcal{N}}s_iy_i+\sum_{(i,j)\in\mathcal{A}'}c_{ij}z_{ij} \label{obg_obj}\\
				\hbox{s.t.}\ &y_i+y_j\geq z_{ij},&&\forall~ (i,j)\in\mathcal{A}', \label{connections_obg} \\
				& \sum_{i\in\mathcal{M}\cup \mathcal{N}}y_i  \leq K, \label{cardinality}\\
				& y_i \in\{0,1\}, && \forall~ i\in \mathcal{M}\cup \mathcal{N},\label{ybinary_obj}\\
				& z_{ij}\in \{0,1\} , && \forall ~(i,j)\in\mathcal{A}',\label{vbinary_obg}
			\end{align}
		\end{subequations}
		where $s_i := \sum_{\omega \in \Omega(i)} p^\omega$ for $i \in \mathcal{M}\cup \mathcal{N}$ 
		and $c_{ij} := \sum_{\omega \in \Omega(i,j)} p^\omega $ for $(i,j) \in \mathcal{A}'$, respectively.
		%
		%	The numbers of constraints and variables in \eqref{IMP_obg_presolve} are given by $|\mathcal{A}'|+1$ and $|\mathcal{M}|+|\mathcal{N}|+|\mathcal{A}'|$, respectively.
		%
		Since $|\mathcal{A}'| \leq |\mathcal{A}|$, we have the following theorem providing upper bounds for the numbers of variables and constraints in the reduced SMCLP formulation \eqref{IMP_obg_presolve}.
		
		%(independent of the number of scenarios $|{\Omega}|$).
		
		\begin{theorem}
			\label{bipartitle}
			%Suppose that the IMP is built upon the one-way bipartite graph $\mathcal{G}_{\rm {B}}$ under the LTM and a finite number of scenarios.
			Consider the IMP on the one-way bipartite \revv{network} $\mathcal{G}_{\rm {B}}$ with a finite set of scenarios $\Omega$ under the LTM.
			Applying the SNA and INA on formulation \eqref{IMP}, the numbers of variables and constraints in the reduced SMCLP formulation \eqref{IMP_obg_presolve} are at most $|\mathcal{M}|+|\mathcal{N}|+|\mathcal{A}|$ and $|\mathcal{A}|+1$, respectively.
		\end{theorem}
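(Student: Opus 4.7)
The plan is to verify directly that, starting from \eqref{IMP}, applying the SNA to all nodes whose reachability set is a singleton in the corresponding live-arc graph and then applying the INA to the remaining target variables yields precisely formulation \eqref{IMP_obg_presolve}, after which the two stated bounds are obtained by an elementary counting. The structural facts I would exploit are \eqref{obg_property} and \eqref{omegarelation}: the first says that the reachability set of any node in any live-arc graph is either a singleton or a pair $\{i,j\}$ with $i\in\mathcal{M}$, $j\in\mathcal{N}$, and the second says that for every $j\in\mathcal{N}$ the scenarios partition as $\Omega(j)\cup\bigcup_{i\in\mathcal{M}}\Omega(i,j)$, so that every variable $z_i^\omega$ in \eqref{IMP} will fall into exactly one of the two aggregation regimes.

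First I would dispose of the source side. Because every $i\in\mathcal{M}$ has no incoming arc in $\mathcal{G}_{\rm{B}}$, we have $\mathcal{R}(\mathcal{G}_{\rm{B}}^{\omega},i)=\{i\}$ and hence $\Omega(i)=\Omega$. The SNA therefore collapses every $z_i^\omega$ ($\omega\in\Omega$) into $y_i$, drops the $|\Omega|$ associated reachability constraints, and contributes the total coefficient $\sum_{\omega\in\Omega}p^\omega = 1 = s_i$ to $y_i$ in the objective, which matches \eqref{obg_obj}.

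Next I would process the target side, which is the only nontrivial step. For each $j\in\mathcal{N}$ I would use \eqref{omegarelation} to split $\Omega$ into $\Omega(j)$ and the sets $\Omega(i,j)$, $i\in\mathcal{M}$. On $\Omega(j)$ the reachability set is $\{j\}$, so the SNA again replaces each $z_j^\omega$ by $y_j$ and removes the trivialized constraint, adding $\sum_{\omega\in\Omega(j)}p^\omega$ to the coefficient of $y_j$ to give the final value $s_j$. On each $\Omega(i,j)$ with $(i,j)\in\mathcal{A}'$, the reachability set is identically $\{i,j\}$ for every scenario, so any two variables $z_j^{\omega_1}$ and $z_j^{\omega_2}$ with $\omega_1,\omega_2\in\Omega(i,j)$ satisfy the isomorphism condition \eqref{SCequalcondi_equation}. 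The INA therefore collapses all of them into a single variable $z_{ij}$ with objective coefficient $c_{ij}=\sum_{\omega\in\Omega(i,j)}p^\omega$, and all but one of the corresponding reachability constraints become duplicates and are removed, leaving the single inequality $y_i+y_j\geq z_{ij}$ in \eqref{connections_obg}.

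Finally, once the reduced model is recognized as \eqref{IMP_obg_presolve}, the bounds are immediate: the remaining variables are the $|\mathcal{M}|+|\mathcal{N}|$ indicators $y_i$ together with one $z_{ij}$ for each $(i,j)\in\mathcal{A}'$, and the remaining constraints are the $|\mathcal{A}'|$ reachability inequalities \eqref{connections_obg} together with the single cardinality constraint \eqref{cardinality}; the bounds then follow from $|\mathcal{A}'|\le|\mathcal{A}|$. I do not expect a substantive obstacle; the only point that deserves care is verifying that the scenario sets $\Omega(i)$ and $\Omega(i,j)$ actually exhaust $\Omega$ for every $i$ and every $j$, so that no original variable $z_i^\omega$ escapes aggregation, and this is exactly the content of \eqref{omegarelation}.
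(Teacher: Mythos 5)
Your proposal is correct and follows essentially the same route as the paper, which derives formulation \eqref{IMP_obg_presolve} by exactly this case split — SNA on all singleton reachability sets via \eqref{obg_property} and \eqref{omegarelation}, INA on the scenario classes $\Omega(i,j)$ to produce the aggregated variables $z_{ij}$ — and then reads off the bounds from $|\mathcal{A}'|\le|\mathcal{A}|$. No gaps.
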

		%\begin{remark}
		%	Notice that, when considering the IMP that is built based on the one-way bipartite graph $\mathcal{G}_{\rm B}$ and under the ICM, as each arc $(i,j)$ can either be live or not in a live-arc graph, there are at most $2^{|\mathcal{A}|}$ cases of reachability sets when detecting the isomorphic nodes among different scenarios, which correspond to at most $2^{|\mathcal{A}|}$ variables $z$.
		%	%
		%	Then after applying the SNA and INA, the upper bounds of numbers of constraints and variables in the reduced formulation can reach to $2^{|\mathcal{A}|}+1$ and $|\mathcal{M}|+|\mathcal{N}|+2^{|\mathcal{A}|}$.
		%	%
		%	The reduced problem can still be large-scale, especially with extensive arcs in $\mathcal{G}_{\rm B}$ and great number of scenarios. 
		%\end{remark}

		Finally, we provide more analysis results for formulation \eqref{IMP_obg_presolve}. To proceed, we note that using \eqref{omegarelation} and $\sum_{\omega\in\Omega}p^{\omega}=1$, we have the following properties on the objective coefficients of formulation \eqref{IMP_obg_presolve}.
		\begin{remark}
			\label{defsc}
			%Using the definitions of $s$ and $c$ and 
			(i) $s_i = 1$ for all $i \in \mathcal{M}$; and (ii) $s_j + \sum_{i\,:\,(i,j)\in\mathcal{A}'}c_{ij}=1$ for all $j \in \mathcal{N}$.
		\end{remark}
		
		\begin{proposition}
			\label{theorem:obj}
			The linear programming (LP) relaxation of formulation \eqref{IMP_obg_presolve} is tight.
			Moreover, formulation \eqref{IMP_obg_presolve} can be solved in strongly \revv{polynomial time}.
		\end{proposition}
		\begin{proof}
			The proof \revv{is given in} Appendix \ref{proof_bipartite}.
		\end{proof}
		
		%	\todo[inline]{SC: In our first submitted version, we use ``The proof {\color{blue}{is given in}} Appendix \ref{proof_bipartite}''. We may keep the color of ``is given in'' to be blue or just black.}
		
		%
		%The tightness result in Theorem \ref{theorem: obj} sheds a useful insight on general (nonbipartite) networks.
		%%
		%In particular, when the considered network contains lots of special structures (like one-way bipartite subgraphs), it is reasonable to expect that the LP relaxation of formulation \eqref{IMP} under the LTM is likely to be tight or nearly tight.
		%%
		%Indeed, this is consistent to the empirical findings in G\"{u}ney \cite{Guney2019} in which the author observed that the gap between the optimal objective value of formulation \eqref{IMP} and that of its LP relaxation is very small {\color{red}{(smaller than 0.2\%)}} for the IMP built on real-world social networks. 

		\subsection{Complete \revv{network} under the ICM}\label{subsect:cg}
		
		%As illustrated in Section \ref{}, after applying the SCNA, the number of variables $\boldsymbol{z}$ in the reduced formulation \eqref{strongIMP} equals to the number of SCCs in the live-arc graphs $\mathcal{G}^\omega$, $\omega \in \Omega$.
		In this subsection, we study another special case of the IMP where the considered \revv{network} is a complete graph (denoted as $\mathcal{G}_{\rm C}=(\mathcal{V}_{\rm C}, \mathcal{A}_{\rm C})$) and the influence propagation model is the ICM.
		{Let  $\G_{\rm{C}}^{\omega}=(\V_{\rm{C}},\A_{\rm{C}}^{\omega})$ be a live-arc graph and denote $n= |\V_{\rm{C}}|$.}
		Recall that for the live-arc graph constructed under the ICM, each arc is determined to be live independently with probability $\pi_{ij}$.
		Consequently, $\G_{\rm{C}}^{\omega}$  can be seen as a directed \emph{Erd\H{o}s-R\'{e}nyi} (ER) random graph \cite{cao2020connectivity,detering2019Boot}.
		% with inhomogeneous arc probabilities $\pi_{ij}$, $(i,j) \in \A_{\rm{C}}$.
		If the arc probabilities are homogeneous, i.e., $\pi_{ij} = p$ for all $(i,j) \in \A_{\rm{C}}$ and some $p \in(0,1]$, $\G_{\rm{C}}^{\omega}$ is a homogeneous directed ER random graph; otherwise it is an inhomogeneous directed ER random graph.
		Homogeneous directed ER random graph is shown to be strongly connected with a probability \rev{tending to} one (as $n\rightarrow\infty$) under certain conditions; see, e.g., \cite{graham2008note}.
		{The following lemma further provides a lower bound for the probability of the strong connectivity of $\G_{\rm{C}}^{\omega}$ with respect to the number of nodes $n$ and the arc probability $p$.}
		
		\begin{lemma}\label{theorem: strongLB}
			Suppose that $\pi_{ij} = p \in (0,1]$ for all $(i,j) \in \A_{\rm{C}}$ and 
			\begin{equation}\label{assumptionSCC}
				\max\left\{(n-1)(1-p^2)^{\frac{n}{2}+1}, 2(1-p^2)^{\frac{3n}{16}-1}\right\} \leq 1.
			\end{equation}
			Then 
			\begin{equation}\label{ERlimproperty_equation}
				\mathbb{P}(\text{Graph}~\mathcal{G}_{\rm{C}}^\omega\text{ is strongly connected}) \geq 1 - n(n-1)(1-p^2)^{n-1}.
			\end{equation}
		\end{lemma}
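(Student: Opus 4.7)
The plan is to bound $\mathbb{P}(\mathcal{G}_{\rm{C}}^\omega \text{ is not strongly connected})$ by combining a union bound over ordered pairs of distinct vertices with an independence argument that upper-bounds the probability of no directed path from $i$ to $j$ by considering only very short paths. First I will fix $(i,j)$ with $i \neq j$ and examine two families of short paths from $i$ to $j$ in $\mathcal{G}_{\rm{C}}^\omega$: the direct arc $i \to j$, and the $n-2$ length-two paths $i \to k \to j$ indexed by intermediaries $k \in \mathcal{V}_{\rm{C}} \setminus \{i,j\}$. The key observation is that all these paths use pairwise disjoint sets of arcs---the direct path uses only $(i,j)$, while each length-two path uses only $(i,k)$ and $(k,j)$---and under the homogeneous ICM every arc is independently live with probability $p$. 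Therefore the $n-1$ events ``this particular short path exists'' are mutually independent, and the probability that none of them occurs equals $(1-p)(1-p^2)^{n-2}$. Since $p\in(0,1]$ implies $1-p \leq 1-p^2$, this gives
\begin{equation*}
\mathbb{P}(\text{no directed path from } i \text{ to } j \text{ in } \mathcal{G}_{\rm{C}}^\omega) \leq (1-p)(1-p^2)^{n-2} \leq (1-p^2)^{n-1}.
\end{equation*}

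Next, I will use the elementary characterization that a directed graph is strongly connected if and only if for every ordered pair of distinct vertices there is a directed path from the first to the second. Applying the union bound over all $n(n-1)$ such pairs to the inequality above yields
\begin{equation*}
\mathbb{P}(\mathcal{G}_{\rm{C}}^\omega \text{ is not strongly connected}) \leq n(n-1)(1-p^2)^{n-1},
\end{equation*}
and taking complements will give precisely \eqref{ERlimproperty_equation}.

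The one nontrivial point to verify carefully is the mutual-independence claim, which hinges entirely on the disjointness of the arcs used by the $n-1$ selected short paths together with the independent-arcs property of the homogeneous ICM. It is worth noting that this simple argument does not appear to invoke the technical inequalities in \eqref{assumptionSCC}; my expectation is that those conditions are either carried along to ensure that the lower bound in \eqref{ERlimproperty_equation} is nontrivial in the asymptotic regime where Lemma~\ref{theorem: strongLB} is subsequently used (e.g.\ for claims of the form $\mathbb{P}(\mathcal{G}_{\rm{C}}^\omega \text{ is strongly connected}) \to 1$), or that a more refined proof route is preferred---for instance one combining the short-path bound above with an expansion estimate on the BFS-reachable set out of each vertex---that genuinely needs those inequalities at an intermediate step.
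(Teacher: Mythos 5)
Your argument is correct: the $n-1$ candidate paths (the direct arc $i\to j$ and the two-arc paths $i\to k\to j$) use pairwise disjoint arc sets, so under the independent-arcs model the events ``this path is fully live'' are mutually independent, giving $\mathbb{P}(\text{no }i\text{--}j\text{ path})\leq (1-p)(1-p^2)^{n-2}\leq (1-p^2)^{n-1}$, and the union bound over the $n(n-1)$ ordered pairs yields exactly \eqref{ERlimproperty_equation}. The paper itself does not print a proof of this lemma (it defers to section~1 of the cited supplement), so a line-by-line comparison is not possible here; however, the fact that the lemma carries hypothesis \eqref{assumptionSCC} --- which your derivation never touches --- strongly indicates that the authors' route is different, most plausibly the classical cut-based argument for strong connectivity of random digraphs: one bounds $\mathbb{P}(\text{not strongly connected})$ by $\sum_{k=1}^{n-1}\binom{n}{k}(\cdot)^{k(n-k)}$ over vertex subsets with no arcs crossing in one direction, and conditions with exponents like $\tfrac{n}{2}+1$ and $\tfrac{3n}{16}-1$ are exactly the kind of technical inequalities needed to show the middle terms of that sum are dominated by the $k=1$ and $k=n-1$ terms. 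What your approach buys is substantial: it is shorter, entirely elementary, and proves a strictly stronger statement, since \eqref{ERlimproperty_equation} holds for all $n\geq 2$ and $p\in(0,1]$ without assuming \eqref{assumptionSCC} (the hypothesis would then only be needed downstream, e.g.\ to make the bound nontrivial or to verify the asymptotic claims following Theorem~\ref{ERCorollary}). The one point you flag as delicate --- mutual independence --- is indeed the crux, and your verification via arc-disjointness is sound: for $k\neq k'$ the pairs $\{(i,k),(k,j)\}$ and $\{(i,k'),(k',j)\}$ share no arc because $k,k'\notin\{i,j\}$, and neither contains $(i,j)$.
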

		\begin{proof}
			The proof can be found in \revv{Section} 1 of \cite{Chen2022}.
		\end{proof}

		\begin{theorem}
			\label{ERCorollary}
			Consider the IMP on the complete \revv{network} $\mathcal{G}_{\rm {C}}$ with a finite set of scenarios $\Omega$ under the ICM.
			Suppose that $\pi_{ij}\geq  p~\revv{\in (0,1]}$ for all $(i,j) \in \mathcal{A}_{\rm{C}}$ and \eqref{assumptionSCC} holds.
			Then, by applying the SCNA and INA, there are only $n+1$ variables and two linear constraints in the reduced SMCLP formulation \eqref{IMP_INA} with a probability at least $p^*$ where
			\begin{equation}
				\label{defpstar}
				p^*=(1 - n(n-1)(1-p^2)^{n-1})^{|\Omega|}.
			\end{equation}
			%In particular, for a fixed $p$
		\end{theorem}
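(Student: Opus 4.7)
The plan is to combine Lemma \ref{theorem: strongLB} with a monotonicity/coupling argument and then with the independence of scenarios under the ICM, after which the conclusion reduces to tracing SCNA and INA through the best-case collapse of every live-arc graph into a single compact node.

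First, I would argue that for each scenario $\omega \in \Omega$ the probability that $\mathcal{G}_{\rm C}^\omega$ is strongly connected is at least $1 - n(n-1)(1-p^2)^{n-1}$, the bound guaranteed by Lemma \ref{theorem: strongLB} for the homogeneous case. Under the ICM, each arc $(i,j) \in \mathcal{A}_{\rm C}$ is sampled as live independently with probability $\pi_{ij}$, so a natural coupling is to draw i.i.d.\ uniforms $U_{ij} \in [0,1]$ and declare $(i,j)$ live iff $U_{ij} \leq \pi_{ij}$. Because $\pi_{ij} \geq p$, the homogeneous ER graph with threshold $p$ is a subgraph of $\mathcal{G}_{\rm C}^\omega$ under this coupling, and strong connectivity is a monotone increasing event in the live-arc set. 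Hence $\mathbb{P}(\mathcal{G}_{\rm C}^\omega \text{ is strongly connected})$ is at least the corresponding probability in the homogeneous case, which by \eqref{assumptionSCC} and Lemma \ref{theorem: strongLB} is at least $1 - n(n-1)(1-p^2)^{n-1}$.

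Second, since the live-arc graphs $\mathcal{G}_{\rm C}^\omega$, $\omega \in \Omega$, are generated by independent Bernoulli trials on the arc set under the ICM, the events that each is strongly connected are mutually independent. Thus the probability that \emph{every} $\mathcal{G}_{\rm C}^\omega$ is strongly connected is at least
\[
\left(1 - n(n-1)(1-p^2)^{n-1}\right)^{|\Omega|} = p^*.
\]

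Finally, I would trace the two presolving steps on the event that every $\mathcal{G}_{\rm C}^\omega$ is strongly connected. In that case each live-arc graph consists of a single SCC containing all $n$ nodes, so applying the SCNA collapses $\mathcal{G}_{\rm C}^\omega$ to a single node $u_\omega$ with $|\mathcal{SC}_{u_\omega}^\omega| = n$, $\mathcal{R}(\bar{\mathcal{G}}^\omega, u_\omega) = \{u_\omega\}$, and $y(\mathcal{SC}_{u_\omega}^\omega) = \sum_{j \in \mathcal{V}_{\rm C}} y_j$; formulation \eqref{strongIMP} therefore has one variable $z_{u_\omega}^\omega$ per scenario (with objective coefficient $p^\omega n$) and the identical reachability constraint $\sum_{j \in \mathcal{V}_{\rm C}} y_j \geq z_{u_\omega}^\omega$ in every scenario. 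Since all these aggregated nodes share the same reachability set $\mathcal{V}_{\rm C}$, condition \eqref{SCequalcondi_equation} holds pairwise across scenarios, and the INA aggregates them into one variable $z$ with coefficient $n \sum_{\omega \in \Omega} p^\omega = n$, leaving a single reachability constraint $\sum_{j \in \mathcal{V}_{\rm C}} y_j \geq z$. Together with the $n$ variables $y_j$ and the budget constraint \eqref{budgetcons}, this yields exactly $n+1$ variables and $2$ linear constraints in formulation \eqref{IMP_INA}, as claimed.

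The main obstacle is the first step: one must explicitly justify the coupling so that the strong-connectivity probability inherits monotonicity in the arc probabilities, allowing the homogeneous bound of Lemma \ref{theorem: strongLB} to be lifted to the inhomogeneous setting $\pi_{ij} \geq p$. The remaining steps are then a direct bookkeeping exercise on the definitions of SCNA and INA.
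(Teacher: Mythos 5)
Your proposal is correct and follows essentially the same route as the paper: lift the homogeneous strong-connectivity bound of Lemma \ref{theorem: strongLB} to the case $\pi_{ij}\geq p$ by monotonicity, use independence of the scenarios to obtain the exponent $|\Omega|$, and then observe that on the event that every live-arc graph is strongly connected the SCNA collapses each graph to one node and the INA merges all of them into a single variable and constraint. The paper simply asserts the monotonicity step that you justify via an explicit coupling, so your write-up is, if anything, slightly more detailed than the original.
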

		\begin{proof}
			Notice that the probability that $\mathcal{G}^\omega_{\rm{C}}$ is strongly connected with $\pi_{ij} \geq p$ for all $(i,j) \in \A_{\rm{C}}$ is larger than or equal to that with $\pi_{ij}=p$ for all $(i,j) \in \A_{\rm{C}}$.
			This, together with  Lemma \ref{theorem: strongLB} and the fact that each live-arc graph $\mathcal{G}^\omega_{\rm{C}}$ is constructed independently, 
			shows that the probability that all live-arc graphs $\mathcal{G}_{\rm{C}}^\omega$, $\omega \in \Omega$, are strongly connected is at least $p^*$ (defined in \eqref{defpstar}).
			%	Since the number of scenarios is fixed, the probability that, all live-arc graphs are strongly connected, also tends to one with $n\rightarrow\infty$.
			The strong connectivity of graph $\mathcal{G}_{\rm{C}}^\omega$ implies that the number of SCCs in $\mathcal{G}_{\rm{C}}^\omega$ is one and $\mathcal{R}(\G_{\rm{C}}^{\omega},i)=\V_{\rm{C}}$ for all $i \in \V_{\rm{C}}$.
			As a result, with a probability at least $p^*$, (i) the number of variables $\boldsymbol{z}$ in the reduced formulation \eqref{strongIMP} (after applying the SCNA on \eqref{IMP}) is equal to $|\Omega|$; and (ii) the number of variables $\boldsymbol{z}$ in the reduced formulation \eqref{IMP_INA} (after applying the INA on \eqref{strongIMP}) is equal to one.
			\revv{This, together with the fact that there is a cardinality constraint \eqref{budgetcons} in \eqref{IMP_INA}, completes the proof.}
			%The proof is completed.
		\end{proof}
		The value $p^*$ in Theorem \ref{ERCorollary} can tend to one.
		For instance,  if $p \in(0,1]$ is a constant, then $p^* \rightarrow  1$ as $n \rightarrow \infty$ (notice that in this case, condition \eqref{assumptionSCC} also holds).
		%This implies that after applying the SCNA and INA, 
		%with a probability tending to one, 
		%there are only $|\mathcal{V}|+1$ variables and two linear constraints in the reduced SMCLP formulation.
		%This shows that with the increasing number of nodes, all but one of variables $\boldsymbol{z}$ will be removed by the SCNA and INA with a probability tending to one.
		This shows that after applying the SCNA and INA,  there are only $n+1$ variables and two linear constraints in the reduced SMCLP formulation \eqref{IMP_INA} with a probability tending to one. 
		
		\rev{It is worthwhile remarking that Theorem \ref{ERCorollary} also sheds a useful insight that for the IMP  with a general large and \revv{well-connected} network $\G$ 
			(not necessary to be complete) with high arc probabilities $\pi_{ij}$, 
			the SCNA and INA can be expected to effectively reduce the sizes of the live-arc graphs and SMCLP formulation \eqref{IMP}.
			Indeed, a \revv{well-connected} network is likely to contain \revv{large} complete subgraphs. 
			By Theorem \ref{ERCorollary}, with high arc probabilities, the nodes in these complete subgraphs are likely to be strongly connected in all live-arc graphs $\G^{\omega},~\omega\in\Omega$, 
			and as a result, more reductions are likely to be detected.
			This is consistent with the computational results in \revv{Section} \ref{sect: EICM} where more reductions can be detected by the proposed SCNA and INA 
			for large and \revv{well-connected} networks with large arc probabilities 
			(see Table \ref{table: RC_ICM} further ahead).}
	
	\section{The algorithms}\label{sect: al}
	%In this section, we shall discuss the implementation of the SCNA and INA.
	%To proceed it, we first describe the algorithms for identifying the presolving reductions in subsection \ref{subsect: im_pre} and then present the integration of them with the BD algorithm in subsection \ref{sect: bdi}.
	In this section, we first discuss the implementation of the INA in \revv{Section} \ref{subsect: im_pre}.
	Then we present the BD algorithm with the proposed SCNA and INA for solving the IMP in \revv{Section} \ref{sect: bdi}.
	
	\subsection{An algorithm for identifying presolving reductions by the INA}\label{subsect: im_pre}
	
	A straightforward implementation of the INA requires to first precompute and store the reachability sets of all nodes in all live-arc graphs $\mathcal{G}^\omega,~\omega\in\Omega$, and then detect all 4-tuples $(\omega,\eta,i_1,i_2 )$ that satisfy condition \eqref{SCequalcondi_equation}.
	However, this leads to a high runtime complexity and a large memory consumption, which are  $\mathcal{O}({|\Omega|}^2{|\V|}^3)$ and $\mathcal{O}(|\Omega||\V|^2)$, respectively.
	In this subsection, we shall overcome this weakness by presenting a hashing-based heuristic algorithm.

	%
	%To identify the reductions of the INA and get the reduced formulation \eqref{IMP_INA}, we need to first precompute and store the reachability sets of all nodes in all live-arc graphs $\mathcal{G}^\omega$ and then detect the pairs that satisfy condition \eqref{SCequalcondi_equation}.
	%
	We first discuss the computation of the reachability sets.
	G\"{u}ney et al. \cite{Guney2020} computed the reachability set $\mathcal{R}(\mathcal{{G}}^\omega, i)$ of each node $i$ in each live-arc graph $\mathcal{G}^\omega$, $\omega\in\Omega$, by applying a reverse breadth-first search (BFS) starting from node $i$.
	The computational complexity is $\mathcal{O}(|\mathcal{V}|+|\mathcal{A}^\omega|)$.
	Here we notice that it can be (possibly) much faster to compute $\mathcal{R}(\mathcal{{G}}^\omega, i)$ based on the compact graph $\mathcal{\bar{G}}^\omega$.
	Indeed, as it has been mentioned in \revv{Section} \ref{sect: SCNA}, the reachability sets of nodes inside a given SCC $\mathcal{SC}_u^\omega$ of $\G^{\omega}$ are identical, where $u$ is the corresponding node in graph $\mathcal{\bar{G}}^\omega$.
	Hence, to compute the reachability sets of the nodes in SCC $\mathcal{SC}_u^\omega$, we only need to compute the reachability set $\mathcal{R}(\mathcal{G}^\omega, \mathcal{SC}_u^\omega)$.
	To compute the latter one, we can apply a reverse BFS in the compact graph $\mathcal{\bar{G}}^\omega$
	and use relation \eqref{relation_reachset}.
	The related complexity is $\mathcal{O}(|\mathcal{V}|+|\mathcal{\bar{A}}^\omega|)$, which  is potentially much smaller than $\mathcal{O}(|\mathcal{V}|+|\mathcal{A}^\omega|)$, especially when $|\mathcal{\bar{A}}^\omega|$ is much smaller than $|\mathcal{{A}}^\omega|$.
	Furthermore, as $\bar{\mathcal{G}}^{\omega}$ is a directed acyclic graph (as each node forms an SCC),
	%\todo[inline]{SC@WC: Here, we might remove the sentence ``(as it does not contain any SCC)'', since any graph must have SCCs.
	%When graph is acyclic, each single node corresponds with an SCC.
	%In other way, we could change the sentence to ``(as the nodes in each SCC of $\G^{\omega}$ have been aggregated into a single node in $\bar{\G}^{\omega}$)''\\
	%WC@SC: You are right. I have adapted the sentence.}
	we can perform a \emph{topological ordering} to further speed up the procedure of computing the reachability sets of all nodes in $\bar{\mathcal{G}}^{\omega}$.
	To be more specific, topological ordering for the directed acyclic graph $\bar{\mathcal{G}}^{\omega}$ is a linear ordering of nodes such that for each arc $(u_1,u_2) \in \bar{\mathcal{A}}^{\omega}$, node $u_1$ comes before node $u_2$ in the ordering.
	In our implementation, we traverse all nodes in $\bar{\mathcal{G}}^{\omega}$ to compute their reachability sets according to the {topological ordering}.
	In other words, when computing the reachability set of node $u\in\bar{\V}^{\omega}$, the reachability sets of nodes $u'$, $u'\in \mathcal{N}_{\omega}^-(u)$, have been computed, where $ \mathcal{N}_{\omega}^-(u):=\{ u' \,:\,(u',u) \in \mathcal{\bar{A}}^\omega \}$ is the set of node $u$'s \revv{incoming} neighbor nodes.
	Therefore, to compute node $u$'s reachability set,  we only need to traverse its
	\revv{incoming} neighbor nodes and use relation $ \mathcal{R}(\mathcal{\bar{G}^\omega}, u) = \bigcup_{u' \in \mathcal{N}_{\omega}^-(u)} \mathcal{R}(\mathcal{\bar{G}^\omega}, u')\cup \{u\}$.
	This avoids performing a whole reverse BFS and generally accelerates the computation of $\mathcal{R}(\mathcal{\bar{G}^\omega}, u)$.

	Next, we discuss the storage of the reachability sets.
	First, we can implement SCNA to alleviate the memory consumption of storing the reachability sets (as for each SCC, only a single node's reachability set needs to be stored).
	Second, to further avoid a large memory consumption,  we \revv{only store} those nodes' reachability sets whose sizes are smaller than or equal to a predefined parameter $\maxlength$.
	This also means that only nodes satisfying this criterion will be used for detecting the reductions by the INA.
	%As for the memory consumption, we note that it requires $\mathcal{O}(|\mathcal{V}|^2|\Omega|)$ memory to store the reachability sets of all nodes in all live-arc graphs $\mathcal{G}^\omega$ to implement the INA.
	%%
	%Although the SCNA can alleviate the memory consumption (as for each SCC, only a single node's reachability set needs to be stored), it is still unrealistic to store all nodes' reachability sets, especially for the IMP built upon a large network or a high number of scenarios.
	%
	%To overcome this drawback, we restrict to storing those nodes' reachability sets whose sizes are smaller than or equal to a given parameter $\maxlength$.
	%
	The rationale behind this strategy is that for the nodes with smaller reachability sets, it is more likely to detect nodes that are isomorphic to them, as illustrated in our computational results (see \revv{Section} \ref{subsec:parameter} further ahead).

	Finally, we apply the INA by detecting the pairs whose reachability sets are identical, i.e.,
	\begin{equation}\label{SCequalcondi_equationeq}
		%\mathcal{R}(\mathcal{G}^{\omega},i_1)=\mathcal{R}(\mathcal{G}^{\eta},i_2)
		\bigcup_{v\in\mathcal{R}(\bar{\mathcal{G}}^{\omega},u)}\mathcal{SC}^{\omega}_v=\bigcup_{v\in\mathcal{R}(\bar{\mathcal{G}}^{\revv{\eta}},u_0)}\mathcal{SC}^{\revv{\eta}}_v
	\end{equation}
	for some $u\in \mathcal{\bar{V}}^\omega$ and $ u_0 \in \mathcal{\bar{V}}^{\revv{\eta}}$.
	To do this, we follow \cite{Gurobipresolve} to use a hashing-based method.
	The basic idea of the hashing-based method is to simultaneously build a hashing table that remembers the information of the reachability sets\revv{,} and test whether there exists a reachability set in the hashing table that is identical to the one we are currently looking at.
	Specifically, let $\mathcal{H}$ be the hashing table and for each $\omega\in\Omega$ and $u\in\bar{\mathcal{V}}^{\omega}$,
	let
	$\bigcup_{v\in\mathcal{R}(\bar{\mathcal{G}}^{\omega},u)}\mathcal{SC}^{\omega}_v$
	be the key with 3-tuple $(\omega,u,\mathcal{R}(\bar{\mathcal{G}}^{\omega},u))$ being the stored value in table $\mathcal{H}$.
	At first, table $\mathcal{H}$ is initialized to be $\varnothing$.
	Then, in each iteration, for scenario $\omega\in\Omega$ and node $u\in\bar{\mathcal{V}}^{\omega}$ (with $|\bigcup_{v\in\mathcal{R}(\bar{\mathcal{G}}^{\omega},u)}\mathcal{SC}^{\omega}_v| \leq \maxlength$), table  $\mathcal{H}$ is queried for $\bigcup_{v\in\mathcal{R}(\bar{\mathcal{G}}^{\omega},u)}\mathcal{SC}^{\omega}_v$.
	If condition \eqref{SCequalcondi_equationeq} holds for some corresponding entry $(\revv{\eta}, u_0, \mathcal{R}(\bar{\mathcal{G}}^{\revv{\eta}},u_0))$ in table $\mathcal{H}$, we apply the INA by removing variable $z_u^\omega$, deleting the associated constraint in \eqref{strongconnectioncons}, and adding the objective coefficient of variable $z_u^\omega$ into that of variable $z_{u_0}^{\revv{\eta}}$; otherwise,  tuple $(\omega,u,\mathcal{R}(\bar{\mathcal{G}}^{\omega},u))$ will be added into table $\mathcal{H}$.
	The procedure is repeated until all considered reachability sets are tested.
	
	In summary, we present the implementation of the INA in Algorithm \ref{algorithm_INA} to obtain the reduced SMCLP formulation \eqref{IMP_INA}.
	Here, for simplicity of presentation, the improvement of topological ordering is omitted in Algorithm \ref{algorithm_INA}.
	%
	%We use ${\rm{IsRemoved}}~(\omega,v)$ to indicate whether or not node $v$ in scenario $\omega$ is aggregated by some node.
	%
	In step 4, we perform a reverse BFS on node $u$ in graph $\mathcal{\bar{G}}^{\omega}$ to compute $\mathcal{R}(\bar{\mathcal{G}}^{\omega},u)$.
	In steps 5-9, we use the hashing-based method to detect whether there exists some entry $(\revv{\eta},u_0,\mathcal{R}(\bar{\mathcal{G}}^{\revv{\eta}},u_0))$ in table $\mathcal{H}$ such that \eqref{SCequalcondi_equationeq} holds.
	If yes, we apply the INA reductions; otherwise, we add the new entry $(\omega,u,\mathcal{R}(\bar{\mathcal{G}}^{\omega},u))$   into table $\mathcal{H}$.

	\begin{algorithm}[h]
		\setstretch{1}
		\caption{Implementation of the INA}\label{algorithm_INA}
		\KwIn{ the compact live-arc graphs $\bar{\mathcal{G}}^{\omega}=(\bar{\mathcal{V}}^{\omega},\bar{\mathcal{A}}^{\omega})$ and the SCCs $\mathcal{SC}_u^\omega$, $u \in \bar{\mathcal{V}}^{\omega}$, of the (original) live-arc graphs  $\mathcal{G}^\omega=(\mathcal{V},{\mathcal{A}}^{\omega})$, $\omega\in\Omega$.}
		\KwOut{sets $\tilde{\V}^{\omega}$ and the objective coefficients $f_u^{\omega}$ of variables $z_{u}^\omega$,  $\omega\in\Omega$, $ u\in\tilde{\V}^{\omega}$, in the reduced formulation \eqref{IMP_INA}.}
		Initialize $\tilde{\V}^{\omega}:=\bar{\V}^{\omega}$, $f_u^{\omega}:=p^{\omega}|\mathcal{SC}_u^{\omega}|$ for all $\omega\in\Omega$ and $u\in\bar{\mathcal{V}}^{\omega}$, and $\mathcal{H}:=\varnothing$\;
		\For{$\omega\in\Omega$}
		{
			\For{$u\in\bar{\mathcal{V}}^{\omega}$}
			{
				Perform a reverse BFS on node $u$ in graph $\bar{\mathcal{G}}^{\omega}$ to compute $\mathcal{R}(\bar{\mathcal{G}}^{\omega},u)$\;
				\eIf{$|\bigcup_{v\in\mathcal{R}(\bar{\mathcal{G}}^{\omega},u)}\mathcal{SC}^{\omega}_v|\leq \maxlength$ and  \eqref{SCequalcondi_equationeq} holds for some $(\revv{\eta},u_0,\mathcal{R}(\bar{\mathcal{G}}^{\revv{\eta}},u_0))\in\mathcal{H}$}
				{
					Set $f_{u_0}^{\revv{\eta}}:=f_{u_0}^{\revv{\eta}}+f_u^{\omega}$ and $\tilde{\V}^{\omega}:=\tilde{\V}^{\omega}\backslash\{u\}$\;
				}
				{
					$\mathcal{H}:=\mathcal{H}\cup\left\{(\omega,u,\mathcal{R}(\bar{\mathcal{G}}^{\omega},u))\right\}$\;
				}
			}
		}
	\end{algorithm}

	\subsection{\revv{BD} algorithm for solving the IMP}\label{sect: bdi}
	
	G\"{u}ney et al. \cite{Guney2020} has proposed the BD algorithm to solve the IMP based on formulation \eqref{IMP}.
	The authors showed the effectiveness of integrating the presolving method SNA into the BD algorithm.
	In this subsection, to further enhance the capability of using the BD algorithm to solve the IMP, we attempt to integrate the proposed SCNA and INA into the BD algorithm, or equivalently, to design a BD algorithm that is based on the reduced formulation \eqref{IMP_INA} and the compact graphs $\bar{\mathcal{G}}^\omega$, $\omega \in \Omega$.
	
	%For notational convenience, for each $\omega\in\Omega$ and $v\in\bar{\mathcal{V}}^{\omega}$, we denote $\sum_{j\in\mathcal{SC}_v^{\omega}}y_j$ as $I_v^{\omega}(\boldsymbol{y})$, where $\mathcal{SC}_v^{\omega}$ is the SCC (in the original live-arc graph $\mathcal{G}^\omega$) corresponding to node $v$ (in the compact live-arc graph $\mathcal{\bar{G}}^\omega$).
	
	\subsubsection{Reformulation of \eqref{IMP_INA}}
	\label{BDref}
	We first briefly introduce the BD reformulation of \eqref{IMP_INA} (more details can be found in \cite{Guney2020}).
	To begin with, we note that replacing each binary variable $z_u^\omega$ by a continuous variable taking value in $[0,1]$ does not change the optimal value of formulation \eqref{IMP_INA}.
	For each $\omega\in\Omega$, let $\revv{\varphi^\omega}$ represent \revv{the variable} that captures the contribution of scenario $\omega$ to the objective function.
	Then, we can project out variables $\boldsymbol{z}$ and equivalently reformulate \eqref{IMP_INA} as
	\begin{equation}\label{IMP2}
		\max_{\boldsymbol{y},\,\boldsymbol{\varphi}} \left\{\sum_{\omega\in\Omega}\varphi^{\omega}\,:\, 	\eqref{budgetcons},~\eqref{ybincons},~\varphi^{\omega}\leq\Phi^{\omega}(\boldsymbol{y}),\forall~ \omega\in\Omega\right\}
	\end{equation}
	where function $\Phi^{\omega}(\boldsymbol{y})$ is defined as follows:
	\begin{equation}\label{subIMP}
		\Phi^{\omega}(\boldsymbol{y}):=\max_{\boldsymbol{z}}\left\{\sum_{u\in\tilde{\mathcal{V}}^{\omega}}f_u^{\omega}z_u^{\omega}\,:\,
		z_u^{\omega}\leq\sum_{v\in\mathcal{R}(\bar{\mathcal{G}}^{\omega},u)}y(\mathcal{SC}_v^{\omega}),~0\leq z_u^{\omega}\leq 1,~\forall~ u\in\tilde{\mathcal{V}}^{\omega}\right\}.
	\end{equation}
	For a fixed $\bar{\boldsymbol{y}} \in {[0,1]}^{|\mathcal{V}|}$, to model the inequalities $\varphi^{\omega}\leq\Phi^{\omega}(\boldsymbol{y})$ for all $\omega\in\Omega$, we use the Benders optimality cuts which are derived as follows.
	First, the dual of formulation \eqref{subIMP} when $\boldsymbol{y}=\bar{\boldsymbol{y}}$ is
	\begin{equation}\label{dualsubIMP_fix_y}
		\min_{\boldsymbol{\alpha}^\omega,\,\boldsymbol{\beta}^\omega  }\left\{\sum_{u\in\tilde{\mathcal{V}}^{\omega}}\left(\alpha_u^{\omega}\sum_{v\in\mathcal{R}(\bar{\mathcal{G}}^{\omega},u)}\bar{y}(\mathcal{SC}_v^{\omega})+\beta_u^{\omega}\right)\,:\,\alpha_u^{\omega}+\beta_u^{\omega}\geq f_u^{\omega},~\alpha_u^{\omega},~\beta_u^{\omega}\geq0,~\forall~ u\in\tilde{\mathcal{V}}^{\omega}\right\},
	\end{equation}
	where $\alpha_u^{\omega}$ and $\beta_u^{\omega}$ are the dual variables of constraints $z_u^{\omega}\leq\sum_{v\in\mathcal{R}(\bar{\mathcal{G}}^{\omega},u)}\bar{y}(\mathcal{SC}_v^{\omega})$ and $z_u^{\omega}\leq 1$, respectively.
	%\todo[inline]{SC@WC: The definition of ``$\bar{y}(\mathcal{SC}_v^{\omega})$'' have been deleted.
	%I am wondering whether removing it will lead to some misunderstanding.\\
	%WC@SC: If you keep it, you should also state $y(\mathcal{SC}_v^\omega)$ but which has already been stated after the formulation \eqref{IMP_INA}. SO it should be OK.}
	%
	Clearly, formulation \eqref{dualsubIMP_fix_y} has a closed form solution \revv{$(\tilde{\boldsymbol{{\alpha}}}^{\omega},\tilde{\boldsymbol{{\beta}}}^{\omega})$:
		\begin{equation}\label{ab}
			\begin{aligned}
				(\tilde{\alpha}_u^{\omega},\tilde{\beta}_u^{\omega})=\left\{
				\begin{array}{ll}
					(0,f_u^{\omega}),& {\text{if}}~\sum_{v\in\mathcal{R}(\bar{\mathcal{G}}^{\omega},u)}\bar{y}(\mathcal{SC}_v^{\omega})\geq 1;\\[3pt]
					(f_u^{\omega},0),& \text{otherwise},\\
				\end{array}\right. ~  \forall ~u \in \tilde{\mathcal{V}}^{\omega}.
			\end{aligned}
	\end{equation}}%
	Then the Benders optimality cuts for formulation \eqref{IMP2} are given by
	\revv{\begin{equation}\label{benderscut1}
			\varphi^{\omega}\leq\sum_{u\in\tilde{\mathcal{V}}^{\omega}}\left(\tilde{\alpha}_u^{\omega}\sum_{v\in\mathcal{R}(\bar{\mathcal{G}}^{\omega},u)}y(\mathcal{SC}_v^{\omega})+\tilde{\beta}_u^{\omega}\right),~\forall~ \omega\in\Omega.
		\end{equation}
		From \eqref{ab},  $(\tilde{\boldsymbol{{\alpha}}}^{\omega},\tilde{\boldsymbol{{\beta}}}^{\omega})$  depends on $\bar{\boldsymbol{y}}$.
		Thus, inequality \eqref{benderscut1} depends on $\bar{\boldsymbol{y}}$ as well, but we omit this dependence for notational convenience.}
	
	\revv{To solve formulation \eqref{IMP2}, we use a branch-and-Benders-cut approach in which a branch-and-cut search tree is created and the Benders optimality cuts \eqref{benderscut1} are separated at each branch-and-cut node.
		Following \cite{Guney2020,Wu2017}, we start with a relaxed master problem of  \eqref{IMP2} in which inequalities \eqref{benderscut1} with $\bar{\boldsymbol{y}}=\boldsymbol{0}$, i.e.,
		\begin{equation}\label{subineq}
			\varphi^{\omega}\leq\sum_{u\in\tilde{\mathcal{V}}^{\omega}}f_u^\omega\sum_{v\in\mathcal{R}(\bar{\mathcal{G}}^{\omega},u)}y(\mathcal{SC}_v^{\omega}),~\forall ~\omega\in\Omega,
		\end{equation}
		are added. Here $\boldsymbol{0}$ is the $|\mathcal{V}|$-dimensional zero vector.}

	For a given point $\bar{\boldsymbol{y}} \in {[0,1]}^{|\mathcal{V}|}$, it is interesting to ask whether or not applying the SCNA or INA changes the Benders optimality cuts \eqref{benderscut1}, or equivalently, whether or not the Benders optimality cuts \revv{\eqref{bc1}, \eqref{bc4}, and \eqref{benderscut1}} based on formulations \eqref{IMP}, \eqref{strongIMP}, and \eqref{IMP_INA} are equivalent\revv{, where}
	\revv{
		\begin{align}
			& \varphi^{\omega}\leq\sum_{i\in\mathcal{V}}\left(\hat{\alpha}_i^{\omega}\sum_{j\in\mathcal{R}(\mathcal{G}^{\omega},i)}y_j+\hat{\beta}_i^{\omega}\right),~\forall~\omega \in \Omega,\label{bc1}\\
			& \varphi^{\omega}\leq\sum_{u\in\bar{\mathcal{V}}^{\omega}}\left(\bar{\alpha}_u^{\omega}\sum_{v\in\mathcal{R}(\bar{\mathcal{G}}^{\omega},u)}y(\mathcal{SC}_v^{\omega})+\bar{\beta}_u^{\omega}\right),~\forall~\omega \in \Omega.	\label{bc4}
		\end{align}
	}%
	\revv{Here,
		\begin{align}\label{ab1}
			& (\hat{\alpha}_i^{\omega},\hat{\beta}_i^{\omega})=\left\{
			\begin{array}{ll}
				(0,p^\omega),& {\text{if}}~\sum_{j\in\mathcal{R}(\mathcal{G}^{\omega},i)}\bar{y}_j\geq 1;\\[3pt]
				(p^\omega,0),& \text{otherwise},\\
			\end{array}\right. ~  \forall ~i \in {\mathcal{V}},\\
			&(\bar{\alpha}_u^{\omega},\bar{\beta}_u^{\omega})=\left\{
			\begin{array}{ll}
				(0,p^\omega|\mathcal{SC}_u^{\omega}|),& {\text{if}}~\sum_{v\in\mathcal{R}(\bar{\mathcal{G}}^{\omega},u)}\bar{y}(\mathcal{SC}_v^{\omega})\geq 1;\\[3pt]
				(p^\omega|\mathcal{SC}_u^{\omega}|,0),& \text{otherwise},\\
			\end{array}\right. ~  \forall ~u \in \bar{\mathcal{V}}^{\omega}.\label{ab2}
		\end{align}
	}%
	This question is addressed by the following proposition.
	\begin{proposition}\label{bcnotchanged}
		\revv{Given a point $\bar{\boldsymbol{y}} \in {[0,1]}^{|\mathcal{V}|}$ and a scenario $\omega \in \Omega$, (i) \eqref{bc1} and \eqref{bc4} are equivalent; (ii) \eqref{benderscut1} and \eqref{bc4} may be different.}
		%Given a point $\bar{\boldsymbol{y}} \in {[0,1]}^{|\mathcal{V}|}$ and a scenario $\omega \in \Omega$, (i) only applying the SCNA will not change the Benders optimality cut; (ii) applying the INA may change the Benders optimality cut.
	\end{proposition}
	\begin{proof}
		The proof \revv{is given in} Appendix \ref{proof_scna_BD}.
	\end{proof}
	
	\revv{By Proposition \ref{bcnotchanged} (i), we know that Benders optimality cuts before and after applying the SCNA are identical (if INA is not applied).
		As a result, the SCNA will not change the path of the search tree created by the branch-and-Benders-cut algorithm.}
	\revv{However, the separation of the Benders optimality cuts \eqref{bc4} after applying the SCNA could be much more efficient than that of the Benders optimality cuts \eqref{bc1} before applying the SCNA.
		Indeed, given a scenario $\omega$, the separation of \eqref{bc4} can be done in $\mathcal{O}(|\mathcal{V}|+\sum_{u \in \bar{\mathcal{V}}^\omega} |\mathcal{R}(\bar{\mathcal{G}}^\omega, u)|)\approx \mathcal{O}(\sum_{u \in \bar{\mathcal{V}}^\omega} |\mathcal{R}(\bar{\mathcal{G}}^\omega, u)|)$ (by computing $\bar{y}(\mathcal{SC}_v^\omega)$ for all $v \in \bar{\mathcal{V}}^\omega$ in $\mathcal{O}(|\mathcal{V}|)$ and \eqref{ab} in $\mathcal{O}(\sum_{u \in \bar{\mathcal{V}}^\omega} |\mathcal{R}(\bar{\mathcal{G}}^\omega, u)|)$.}
	\revv{This could be much smaller than the complexity of a direct implementation of the separation of Benders optimality cuts \eqref{bc1}, which is  $\mathcal{O}(\sum_{i \in {\mathcal{V}}} |\mathcal{R}({\mathcal{G}}^\omega, i)|)$, especially when
		\begin{itemize}
			\item [(a)] $\mathcal{G}^\omega$'s number of SCCs is much smaller than its number of nodes, or
			\item [(b)] the number of elements in the reachability sets of the nodes in $\bar{\mathcal{G}}^\omega$ are much smaller than that in $\mathcal{G}^\omega$.
	\end{itemize}}%
	\revv{\noindent Different from that of applying the SCNA, applying the INA can generally change the Benders optimality cuts, as stated in Proposition \ref{bcnotchanged} (ii).
		This means that the path of the search tree created by the branch-and-Benders-cut algorithm could also be different.
		Nevertheless, as $|\tilde{\mathcal{V}}^\omega|$ may be smaller than  $|\bar{\mathcal{V}}^\omega|$, applying the INA can further improve the separation of the Benders optimality cuts.}
	\revv{We remark that as stated in \cite{Guney2020}, efficient separation of Benders optimality cuts is crucial for the BD algorithm to successfully solve large-scale IMPs.
		In the following, we shall present a more efficient separation algorithm for the Benders optimality cuts by integrating the proposed SCNA and INA into the one developed in \cite{Guney2020}.}

	\subsubsection{\revv{An efficient separation algorithm}}
	
	Having a solution \revv{$(\bar{\boldsymbol{y}},\bar{\boldsymbol{\varphi}}) \in [0,1]^{|\mathcal{V}|} \times \mathbb{R}_+^{|\Omega|}$} of the LP relaxation of this relaxed master problem \revv{of \eqref{IMP2}},
	\revv{we now describe an efficient separation algorithm for} Benders optimality cuts \eqref{benderscut1}.
	\revv{As mentioned in Section \ref{subsect: im_pre}}, it is unrealistic to compute and store all reachability sets of all nodes of all scenarios a priori due to the large memory consumption.
	Hence, similar to \cite{Guney2020}, we introduce parameter $\mathbf{MemLimPerScen}$ to denote the maximally allowed memory consumption per scenario.
	In particular, for each scenario $\omega\in\Omega$, we store the reachability sets of nodes according to their topological ordering in the compact live-arc graph $\bar{\G}^{\omega}$ until the memory consumption reaches $\mathbf{MemLimPerScen}$.
	As a result, when computing Benders optimality cut \eqref{benderscut1}, if $\mathcal{R}(\bar{\mathcal{G}}^{\omega},u)$ has been stored, we access it directly;
	otherwise, we perform a reverse BFS to compute $\mathcal{R}(\bar{\mathcal{G}}^{\omega},u)$ on the fly.
	Moreover, to further improve the efficiency of computing Benders optimality cut \eqref{benderscut1}, we can omit to compute $\mathcal{R}(\bar{\mathcal{G}}^{\omega},u)$ if $\revv{(\tilde{\alpha}_u^{\omega},\tilde{\beta}_u^{\omega})}= (0, f_u^\omega)$ is known a priori.
	More specifically, let $\mathcal{S}^{\omega}:=\{u\in\bar{\mathcal{V}}^{\omega}\,:\,\bar{y}(\mathcal{SC}_u^{\omega})
	\geq1\}$ and \revv{$\revv{\mathcal{I}(\bar{\mathcal{G}}^\omega, \mathcal{S}^{\omega})}$ denote the nodes in $ \tilde{\mathcal{V}}^\omega$ that can be influenced by some node in $\mathcal{S}^\omega$, i.e., $\revv{\mathcal{I}(\bar{\mathcal{G}}^\omega, \mathcal{S}^{\omega})}=\left \{u\in\tilde{\mathcal{V}}^{\omega}\,:\,  \text{there exists~a~directed~path~in~$\bar{\mathcal{G}}^\omega$~from~a~node~in}~\mathcal{S}^{\omega}~\text{to~node}~u\right\}$}.
	Then, for each $u\in \revv{\mathcal{I}(\bar{\mathcal{G}}^\omega, \mathcal{S}^{\omega})}$, we must have  $\sum_{v\in\mathcal{R}(\bar{\mathcal{G}}^{\omega},u)}\bar{y}(\mathcal{SC}_v^{\omega})
	\geq1$, and hence $\revv{(\tilde{\alpha}_u^{\omega},\tilde{\beta}_u^{\omega})}= (0, f_u^\omega)$.
	%Compared with setting $\mathcal{S}^\omega=\{i\in\mathcal{V}^{\omega}\st \bar{y}_i=1\}$ and computing $\mathcal{R}(\mathcal{S}^\omega)$ in the original graph, our method omits more computing on determining whether $\sum_{k\in\mathcal{R}(\bar{\mathcal{G}}^{\omega},v)}I_k^{\omega}(\bar{\boldsymbol{y}})\geq1$.
	In summary, we present the separation of Benders optimality cuts in Algorithm \ref{algorithm_benders}.
	
	\begin{algorithm}[h]
		\caption{Separation of Benders optimality cuts \eqref{benderscut1}}\label{algorithm_benders}
		\KwIn{the compact live-arc graphs $\bar{\mathcal{G}}^{\omega}=(\bar{\mathcal{V}}^{\omega},\bar{\mathcal{A}}^{\omega})$, $\omega \in \Omega$, formulation \eqref{IMP_INA}, and point \revv{$(\bar{\boldsymbol{y}},\bar{\boldsymbol{\varphi}}) \in [0,1]^{|\mathcal{V}|} \times \mathbb{R}_+^{|\Omega|}$}.
		}
		\KwOut{the set $\mathcal{C}$ of Benders optimality cuts which are violated by point $(\bar{\boldsymbol{y}},\bar{\boldsymbol{\varphi}})$.}
		Initialize $\mathcal{C}:=\varnothing$\;
		\For{$\omega\in\Omega$}
		{
			Compute $\mathcal{S}^{\omega}:=\left \{u\in\bar{\mathcal{V}}^{\omega}\,:\, \bar{y}(\mathcal{SC}_u^{\omega})
			\geq1\right \}$\;
			Perform a BFS in $\bar{\G}^{\omega}$ to compute $\revv{\mathcal{I}(\bar{\mathcal{G}}^\omega, \mathcal{S}^{\omega})}$\;
			Initialize $C^{\omega}:=\sum_{u\in\revv{\mathcal{I}(\bar{\mathcal{G}}^\omega, \mathcal{S}^{\omega})}}f_u^{\omega}$ and $c_j^{\omega}:=0$, $j\in\mathcal{V}$\;
			\For{$u\in\tilde{\mathcal{V}}^{\omega}\backslash\revv{\mathcal{I}(\bar{\mathcal{G}}^\omega, \mathcal{S}^{\omega})}$}
			{
				\If{$\mathcal{R}(\mathcal{\bar{G}}^\omega, u)$ is not stored in the memory}{
					Perform a reverse BFS in $\bar{\G}^{\omega}$ to compute $\mathcal{R}(\mathcal{\bar{G}}^\omega, u)$\;}
				\eIf{$\sum_{v\in\mathcal{R}(\bar{\mathcal{G}}^{\omega},u)}\bar{y}(\mathcal{SC}_v^{\omega})\geq1$}
				{
					Set $C^{\omega}:=C^{\omega}+f_u^{\omega}$\;
				}
				{
					Set $c_j^{\omega}:=c_j^{\omega}+f_u^{\omega}$ for all $ j \in \bigcup_{v\in\mathcal{R}(\bar{\mathcal{G}}^{\omega},u)}\mathcal{SC}_v^{\omega} $\;
				}
			}
			\If{$\bar{\varphi}^{\omega}>\sum_{j\in\mathcal{V}}c_j^{\omega}\bar{y}_j+C^{\omega}$}
			{
				Set $\mathcal{C}:=\mathcal{C}\cup\left \{\varphi^{\omega}\leq \sum_{j\in\mathcal{V}}c_j^{\omega}y_j+C^{\omega}\right \}$\;
			}
		}
	\end{algorithm}
	\revv{In Algorithm \ref{algorithm_benders},  the Benders optimality cuts \eqref{benderscut1} are rewritten as
		\begin{equation}
			\label{benderscut2}
			\varphi^{\omega}\leq\sum_{j\in\mathcal{V}}c_j^{\omega}y_j +C^{\omega},~\forall~ \omega\in\Omega,
		\end{equation}
		where $C^{\omega}:=\sum_{u\in\tilde{\mathcal{V}}^{\omega}}\tilde{\beta}_u^{\omega}$, $c_j^{\omega}:=\sum_{u\in\mathcal{J}_j^{\omega}}\tilde{\alpha}_u^{\omega}$, $j \in\mathcal{V}$, and $\mathcal{J}_j^{\omega}:=\left\{u\in\tilde{\mathcal{V}}^{\omega}\,:\, j\in\bigcup_{v\in\mathcal{R}(\bar{\mathcal{G}}^{\omega},u)}\mathcal{SC}_v^{\omega}\right\}$.
		In other words,} $C^\omega$ and $c_j^\omega$ for each $j \in \V$ are used to keep track of the constant term and the coefficient of variable $y_j$ in Benders optimality cut \eqref{benderscut1}, respectively.
	For each $\omega \in \Omega$, we initialize $C^\omega$ and $c_j^\omega$ in step 5 and then sequentially update them \revv{in steps 10-14} depending on whether or not $\sum_{v\in\mathcal{R}(\bar{\mathcal{G}}^{\omega},u)}\bar{y}(\mathcal{SC}_v^{\omega})
	\geq1$ \revv{holds}.
	Finally, in steps 16-18, if Benders optimality cut $\varphi^{\omega}\leq \sum_{j\in\mathcal{V}}c_j^{\omega}y_j+C^{\omega}$ is violated by point $(\bar{\boldsymbol{y}},\bar{\boldsymbol{\varphi}})$, we add it into the set of violated Benders optimality cuts $\mathcal{C}$.
	
	It is worth emphasizing the computational efficiency of our separation algorithm for the Benders optimality cuts over the one in \cite{Guney2020}.
	First, in Algorithm \ref{algorithm_benders},  to compute sets  $\revv{\mathcal{I}(\bar{\mathcal{G}}^\omega, \mathcal{S}^{\omega})}$ and  $\mathcal{R}(\mathcal{\bar{G}}^\omega, u)$, we perform (reverse) BFSes in the compact live-arc graph $\bar{\mathcal{G}}^{\omega}$,
	which is potentially much faster than that in \cite{Guney2020} where the (reverse) BFSes in the (original) live-arc graph ${\mathcal{G}}^{\omega}$ are performed.
	\revv{Second, compared with that in \cite{Guney2020}, fewer reverse BFSes will be performed in Algorithm \ref{algorithm_benders} due to the following two reasons. }
	\begin{itemize}
		\item [(i)] \revv{$|\tilde{\mathcal{V}}^\omega\backslash \mathcal{I}(\bar{\mathcal{G}}^\omega, \mathcal{S}^\omega)|$ in step 6 of Algorithm \ref{algorithm_benders} can be potentially much smaller than that in \cite{Guney2020}, which is $|{\mathcal{V}}\backslash \mathcal{I}({\mathcal{G}}^\omega, \hat{\mathcal{S}}^\omega)|$.
			Here $\hat{\mathcal{S}}^\omega=\{j \in \mathcal{V}\, : \, \bar{y}_j =1\}$, and $\revv{\mathcal{I}({\mathcal{G}}^\omega, \hat{\mathcal{S}}^{\omega})}$ denotes the set of nodes in $\mathcal{V}$ that can be influenced by some node in $\hat{\mathcal{S}}^\omega$ in graph $\mathcal{G}^\omega$.
			Indeed, a node $u\in \tilde{\mathcal{V}}^\omega\backslash \mathcal{I}(\bar{\mathcal{G}}^\omega, \mathcal{S}^\omega)$ corresponds to an SCC in $\mathcal{G}^\omega$ whose nodes cannot be influenced by the nodes in $\bigcup_{v \in \mathcal{S}^\omega}\mathcal{SC}_v^\omega$.
			As a result,
			\begin{equation}
				|\tilde{\mathcal{V}}^\omega\backslash \mathcal{I}(\bar{\mathcal{G}}^\omega, \mathcal{S}^\omega)|
				\leq \left|\mathcal{V}\backslash \mathcal{I}\left(\mathcal{G}^\omega,\bigcup_{v \in \mathcal{S}^\omega}\mathcal{SC}_v^\omega\right) \right|
				\leq |{\mathcal{V}}\backslash \mathcal{I}({\mathcal{G}}^\omega, \hat{\mathcal{S}}^\omega)|
			\end{equation}
			where the last inequality follows from $\hat{\mathcal{S}}^\omega \subseteq \bigcup_{v \in \mathcal{S}^\omega}\mathcal{SC}_v^\omega$ and the fact that $\mathcal{I}(\mathcal{G}^\omega, \mathcal{S}_1) \subseteq \mathcal{I}(\mathcal{G}^\omega, \mathcal{S}_2)$ for any $ \mathcal{S}_1 \subseteq  \mathcal{S}_2 \subseteq \mathcal{V}$.}
		\item [(ii)] \revv{As stated in Section \ref{BDref}, the sum of the sizes of the reachability sets $\mathcal{R}(\bar{\mathcal{G}}^\omega,u)$, $u \in \tilde{\mathcal{V}}^\omega$, could be much smaller than that of the sizes of the reachability sets $\mathcal{R}({\mathcal{G}}^\omega,i)$, $i \in \mathcal{V}$.
			This implies that for a fixed $\mathbf{MemLimPerScen}$, more reachability sets $\mathcal{R}(\bar{\mathcal{G}}^\omega,u)$ are likely to be stored a priori, and thus the condition in step 7 of Algorithm \ref{algorithm_benders} is less likely to occur.  }
	\end{itemize}

	\section{Computational results}\label{sect: CR}
	In this section, we present the computational results to show the effectiveness of the proposed SCNA and INA.
	We use the BD algorithm in \revv{Section} \ref{sect: bdi}, which \revv{was} implemented in C++ linked with IBM ILOG CPLEX optimizer 20.1.0 \cite{Cplex}.
	The Benders \revv{optimality} cuts \revv{were} added using CALLABLE LIBRARIES under the default settings of the branch-and-cut framework of CPLEX.
	The time limit \revv{was} set to 14400 seconds, and all the experiments were performed on a cluster of Intel(R) Xeon(R) Gold 6140 CPU @ 2.30GHz computers.
	Only a single core was used in our experiments.
	\rev{We note here that throughout this section,
		all averages are taken to be geometric means.
		Since the statistics can be zero, we use the shifted geometric mean with a
		shift of $1$ (the shifted geometric mean of values $x_1, x_2,\ldots,x_n$ with shift $s$ is defined as
		$\prod_{k=1}^n(x_k + s)^{1/n} - s$; see \cite{Achterberg2007}).}
	%
	%Our implementation code of the presolving methods and BD solving procedure as well as the benchmarking data set of instances is available at \url{https://github.com/ShengjieChen/PM_IMPCode}.
	%
	\subsection{Networks and settings}
	\label{settings}
	%\todo[inline]{SC@WC: To distinguish the IMP and its sampling version, I suggest to using ``the number of samplings'' not ``the number of scenarios'' after section 2.
	%Or they can be used abusively.}
	Our benchmark data set consists of eight real-world social networks. Four of them have been used in \cite{Guney2020,Wu2017} (MSG, GNU, HEP, and ENRON) and the other four networks are from the SNAP database \footnote{\url{https://snap.stanford.edu/data/}.}
	(FACEBOOK, DEEZER, TWITTER, and EPINIONS).
	The latter ones are large-scale networks  which are used to test the performance of the SCNA and INA in large-scale cases.
	For the undirected networks (GNU, HEP, FACEBOOK, and DEEZER), we convert them into directed networks by adding two directed arcs $(i,j)$ and $(j,i)$ for each edge $(i,j)$.
	\revv{Table \ref{table: instances} summarizes the basic information of these networks, where $|\mathcal{V}|$,  $|\mathcal{A}|$, and $|\mathcal{A}'|$ denote the number of nodes, the number of arcs (including parallel arcs), and  the number of unique arcs, respectively.
		In addition, we also report the arithmetic mean of nodes' degrees  $\rho:=\frac{|\mathcal{A}|}{|\mathcal{V}|}$ \footnote{\revv{$\frac{|\mathcal{A}|}{|\mathcal{V}|}$ is equal to the arithmetic mean of nodes' indegees or outdegrees of the directed network.}}, which reflects the connectivity of the networks \cite{Beineke2002,Cheung2021}.
		%In general, the larger the density $\rho$, the bigger the SCCs in a live-arc graph constructed using the ICM.
	}%
	
	\begin{table}[h]
		\footnotesize
		\centering
		\caption{Eight real-world social networks.}\label{table: instances}
		\setlength{\tabcolsep}{2pt}
		\renewcommand{\arraystretch}{1.5}
		\begin{tabular}{|lllll|l|}
			\hline
			{Network} & $|\mathcal{V}|$&{$|\mathcal{A}|$}&$|\mathcal{A}'|$&\makecell[l]{$\rho$}&{Description}\\
			\hline
			MSG&1899&59835& 20296& 31.5 &Messaging network of UC-Irvine \cite{MSG}\\
			GNU&10879&79988& 79988& 7.4 &Gnutella peer-to-peer file sharing network	 \cite{GNU}\\
			HEP&15233&117782& 64426& 7.7 &High energy physics paper citation network \cite{HEP}\\
			ENRON&36692&367662& 367662& 10.0 & Email communication network from Enron \cite{ENRON}\\
			FACEBOOK &50515&1638612&1638180 &32.4  &Facebook page network in the category of artist \cite{FACEBOOK}\\
			DEEZER & 54573 & 996404 & 996404& 18.3 & Deezer friendship network of users in Croatia \cite{FACEBOOK}\\
			TWITTER&81306&1768149& 1768135& 21.7 & Social network from Twitter \cite{TWITTER}\\
			EPINIONS &131828&841372& 840799& 6.4 & Who-trust-whom social network of Epinions \cite{EPI2}\\
			\hline
		\end{tabular}
	\end{table}

	The selections of parameters of the IMP are also similar to the one in \cite{Guney2020,Wu2017}.
	More specifically, for the IMP under both the ICM and LTM, the cardinality restriction $K$ in formulation \eqref{IMP} is selected in $\{5,10,15,25\}$.
	\revv{As stated at the end of Section \ref{sect:pd}, the number of scenarios $|\Omega|$ is a key parameter to achieve the trade-off between the approximation quality and solution time for solving the sampling version of the IMP.
		In Section 2 of \cite{Chen2022}, we follow \cite{Kahr2020} to conduct experiments to compare the  \emph{approximation gaps} (defined in equation (4) of \cite{Kahr2020}) of the sampling version of the IMP with different number of scenarios.
		The approximation gap estimates the difference of the objective value of the sampling version of the IMP and the true objective value of the IMP.
		The results showed  that $|\Omega|=1000$ is a reasonable choice to achieve a solution with a small approximation gap for instances constructed by the considered networks.
		Therefore, in our experiments,  to reflect different approximation levels of the sampling version
		of the IMP, the number of scenarios $|\Omega|$ is selected in $\{250,500,1000\}$.}
	We next discuss the selections of the activation probability $\pi_{ij}$ (under the ICM) and the weight $b_{ij}$ (under the LTM) for each arc $(i,j)$.
	Let $n_{ij}$ denote the number of parallel arcs from node $i$ to node $j$.
	For the IMP under the ICM, each single arc is assigned the same activation probability $p$ chosen in $\{0.01,0.05,0.10\}$.
	%Thus, $\pi_{ij}$ is set to $1-(1-p)^{n_{ij}}$ to represent the probability that at least one of these parallel arcs appears in a live-arc graph.
	%
	\revv{For the IMP under the LTM,
		we set influence weight on arc $(i,j)$ as $1/n_j$, where $n_j:=\sum_{i\,:\,(i,j)\in\A}n_{ij}$ (\revv{i.e.}, the number of incoming arcs of node $j$ in $\G$) is a normalization factor to ensure that the sum of weights of the incoming arcs to $j$ is \revv{at most} 1.}
	\revv{Each live-arc graph $\mathcal{G}^\omega$ of a scenario $\omega\in \Omega$ is randomly constructed as follows.
		For a live-arc graph $\mathcal{G}^{\omega}$ under the ICM, arc $(i,j)\in \mathcal{A}$ is included in $\mathcal{A}^\omega$ with probability $\pi_{ij}=1-(1-p)^{n_{ij}}$ (representing that at least one of the parallel arcs of $(i,j)$
		appears in $\mathcal{G}^{\omega}$).
		For a live-arc graph $\mathcal{G}^{\omega}$ under the LTM, at most one of node $j$'s incoming arcs $(i,j)$ is included in $\mathcal{A}^\omega$ with probability $b_{ij}=n_{ij}/n_j$ (also representing that at least one of the parallel arcs of $(i,j)$
		appears in $\mathcal{G}^{\omega}$).
		We remark that, with more parallel arcs from node $i$ to node $j$, the activation probability $\pi_{ij}$ (under the ICM) or weight $b_{ij}$ (under the LTM) is larger, and thus arc $(i,j)$ will be more likely to be included in graph $\mathcal{G}^\omega$.}
	For the IMP with each combination of the above parameters, 5 instances are randomly generated.
	Therefore, for each \revv{network} in Table \ref{table: instances}, we have 180 and 60 instances for the IMP under the ICM and LTM, respectively.

	In our experiments, we compare the performance of the following three settings:
	\begin{itemize}
		\item $\Default$: solving the IMP based on the Benders reformulation of \eqref{IMP} with the SNA applied \footnote{This setting can be seen as the implementation in \cite{Guney2020}. Unfortunately, we could not access the code of  \cite{Guney2020} online. Therefore, the results reported in this section are based on our implementation. Notice that, however, due to the differences in hardware and randomness in sampling, it cannot be expected that the  results of setting $\Default$ are the same as those in \cite{Guney2020}.};
		\item $\SCNA$: $\Default$ with the SCNA applied;
		\item $\INA$: $\Default$ with the SCNA and INA applied.
	\end{itemize}
	Following \cite{Guney2020}, the memory control parameter $\mathbf{MemLimPerScen}$ is set to $8/|\Omega|$ GB.
	Unless otherwise stated, in the implementation of the INA (i.e.,  Algorithm \ref{algorithm_INA}), parameter $\maxlength$ is set to $8$ and $4$ for the IMP under the ICM and LTM, respectively.
	Finally, to avoid generating too many Benders optimality cuts at fractional points, we follow \cite{Guney2020} to stop the separation procedure if the dual bound improves by less than $0.001$.
	%\todo[inline]{SC@WC: In our numerical results, we stop the separation process if the dual bound improves by less than $0.001$ but not a relative percentage $0.1\%$.
	%It is totally the same with paper \cite{Guney2020}, where they said \\
	%``we stop separation of fractional
	%cuts in branch-and-bound node as soon as the absolute decrease of the dual bound compared to the previous iteration falls
	%below $0.001$.''\\
	%WC@SC: OK. I have adjusted the sentences.
	%}

	\subsection{Results for the IMP under the ICM}\label{sect: EICM}

	In this subsection, we test the effectiveness of the proposed presolving methods SCNA and INA for the IMP under the ICM.
	%
	%We first compare the reductions by the SCNA and INA.
	Table \ref{table: RC_ICM} reports the reductions by applying the SNA, SCNA, and INA.
	For convenience, we only report the results for the case $|\Omega|=1000$ since the results for the other two cases ($|\Omega|=250, 500$) are similar.
	\revv{For each setting under which the corresponding presolving method(s) are applied, we use $\DeltaZ$ and $\DeltaR$ to represent the reductions in percentage of the numbers of variables $\boldsymbol{z} $ and elements in the  reachability sets of nodes in all live-arc graphs.}
	\revv{In addition, we use $\DeltaZSCNA$ and $\DeltaRSCNA$ to denote the reductions in percentage by applying the SCNA, and  $\DeltaZINA$ and $\DeltaRINA$ to denote the reductions in percentage by applying the INA.}
	For the SCNA, we additionally list the average percentages of nodes and arcs reductions ($\DeltaV=\sum_{\omega\in\Omega}(|\mathcal{V}|-|\mathcal{\bar{V}}^\omega|)/(|\Omega||\mathcal{V}|)$ and $\DeltaA=\sum_{\omega\in\Omega}(|\mathcal{A}^\omega|-|\mathcal{\bar{A}}^\omega|)/(|\Omega||\mathcal{A}^\omega|$))
	%\todo[inline]{SC@WC: (i) The mathematical definition of $\Delta\rm{V}$ and $\Delta\rm{A}$ presented here ($\Delta\rm{V}=\sum_{\omega\in\Omega}(|\mathcal{V}|-|\mathcal{\bar{V}}^\omega|)/(|\Omega||\mathcal{V}|)$ and $\Delta\rm{A}=\sum_{\omega\in\Omega}(|\mathcal{A}^\omega|-|\mathcal{\bar{A}}^\omega|)/(|\Omega||\mathcal{A}^\omega|$)), could lead misunderstanding that they are not the shifted geometric mean of the five random seed.\\
	%(ii)Besides, we did not give the explict mathematical definition of $(+)\Delta \rm{Z},~(+)\Delta \rm{NNZ}$ here.
	%Thus, I suggest to removing the mathematical definition of $\Delta\rm{V}$ and $\Delta\rm{A}$ for more clarity and consistency.}
	to compare the sizes of the compact live-arc graphs $\mathcal{\bar{G}}^{\omega}=(\mathcal{\bar{V}}^{\omega}, \mathcal{\bar{A}}^\omega)$ and the original live-arc graphs $\mathcal{G}^\omega=(\mathcal{V}, \mathcal{A}^\omega)$.
	For the SCNA, the reductions on the numbers of nodes and variables $\boldsymbol{z}$ are equal, and hence we have $\revv{\DeltaZSCNA}=\DeltaV$ under setting $\SCNA$.
	\revv{It is worthwhile remarking that $\DeltaZ$, $\DeltaR$, $\DeltaV$, and $\DeltaA$ can reflect the efficiency of the separation algorithm for the Benders optimality cuts over the one in which no presolving method is applied.
		The larger the $\DeltaZ, \DeltaR, \DeltaV,$ and $\DeltaA$, the more efficient the separation algorithm is.}
	\begin{table}[h]
		\footnotesize
		\centering
		\caption{\revv{The reductions on the sizes of live-arc graphs and numbers of variables $\boldsymbol{z}$ and elements in the reachability sets through applying the SNA, SCNA, and INA (for the IMP  under the ICM).}}
		\label{table: RC_ICM}
		\setlength{\tabcolsep}{3pt}
		\renewcommand{\arraystretch}{1.5}
		\setlength{\tabcolsep}{1.0mm}{
			\begin{tabular}{|ll|ll|lll|ll|}
				\hline
				& &
				\multicolumn{2}{c|}{$\Default$} &
				\multicolumn{3}{c|}{$\SCNA$} &
				\multicolumn{2}{c|}{$\INA$} \\
				\makecell[l]{Network} & \makecell[c]{$p$} & \makecell[l]{$\DeltaZ$} & \makecell[l]{$\DeltaR$}
				&
				$\DeltaZ$~($\DeltaZSCNA/\DeltaV$) & \makecell[l]{$\DeltaR$~($\DeltaRSCNA$)}&
				$\DeltaA$
				&
				\makecell[l]{$\DeltaZ$~($\DeltaZINA$)} & \makecell[l]{$\DeltaR$~($\DeltaRINA$)}
				\\
				\hline
				MSG      & 0.01& 80.9\% & 8.9\%& 83.0\% (2.1\%) & 49.8\% (40.9\%) & 12.9\%& 87.8\% (4.8\%) & 50.9\% (1.1\%)
				\\
				$(\rho=31.5)$& 0.05& 56.4\% & 0.2\%& 82.1\% (25.6\%) & 91.2\% (91.0\%) & 78.8\%& 84.3\% (2.2\%) & 91.3\% ($<$0.1\%)
				\\
				& 0.10& 44.1\% & 0.1\%& 81.3\% (37.2\%) & 95.2\% (95.1\%) & 88.6\%& 82.7\% (1.4\%) & 95.2\% ($<$0.1\%)
				\\
				GNU      & 0.01& 93.1\% & 85.9\%& 93.1\% ($<$0.1\%) & 86.0\% (0.1\%) & 1.0\%& 98.5\% (5.4\%) & 96.0\% (10.0\%)
				\\
				$(\rho=7.4)$& 0.05& 72.3\% & 33.9\%& 73.2\% (0.9\%) & 39.2\% (5.3\%) & 5.1\%& 86.8\% (13.6\%) & 53.0\% (13.8\%)
				\\
				& 0.10& 55.8\% & 0.2\%& 64.1\% (8.3\%) & 59.4\% (59.2\%) & 19.9\%& 74.8\% (10.7\%) & 59.4\% (0.1\%)
				\\
				HEP      & 0.01& 93.3\% & 84.0\%& 93.4\% (0.2\%) & 85.0\% (1.0\%) & 4.6\%& 98.6\% (5.1\%) & 94.6\% (9.6\%)
				\\
				$(\rho=7.7)$& 0.05& 75.8\% & 1.6\%& 80.6\% (4.9\%) & 76.2\% (74.6\%) & 29.7\%& 93.5\% (12.9\%) & 76.8\% (0.6\%)
				\\
				& 0.10& 62.1\% & 0.2\%& 75.5\% (13.4\%) & 88.2\% (88.0\%) & 52.1\%& 91.8\% (16.3\%) & 88.3\% (0.1\%)
				\\
				ENRON    & 0.01& 92.8\% & 21.1\%& 93.0\% (0.1\%) & 35.8\% (14.7\%) & 2.2\%& 95.9\% (2.9\%) & 37.1\% (1.3\%)
				\\
				$(\rho=10.0)$& 0.05& 76.8\% & 0.1\%& 84.4\% (7.6\%) & 73.9\% (73.8\%) & 50.0\%& 90.4\% (6.1\%) & 73.9\% ($<$0.1\%)
				\\
				& 0.10& 64.1\% & $<$0.1\%& 79.7\% (15.5\%) & 81.2\% (81.1\%) & 66.1\%& 86.9\% (7.3\%) & 81.2\% ($<$0.1\%)
				\\
				FACEBOOK & 0.01& 79.1\% & 0.3\%& 81.5\% (2.4\%) & 52.7\% (52.4\%) & 13.9\%& 87.6\% (6.1\%) & 52.7\% ($<$0.1\%)
				\\
				$(\rho=32.4)$& 0.05& 47.8\% & $<$0.1\%& 79.5\% (31.7\%) & 90.8\% (90.8\%) & 77.0\%& 84.1\% (4.6\%) & 90.8\% ($<$0.1\%)
				\\
				& 0.10& 32.5\% & $<$0.1\%& 83.9\% (51.4\%) & 96.2\% (96.2\%) & 90.7\%& 86.8\% (2.9\%) & 96.2\% ($<$0.1\%)
				\\
				DEEZER   & 0.01& 84.4\% & 65.4\%& 84.5\% (0.1\%) & 65.8\% (0.4\%) & 1.0\%& 93.6\% (9.1\%) & 80.0\% (14.2\%)
				\\
				$(\rho=18.3)$& 0.05& 50.6\% & $<$0.1\%& 67.2\% (16.6\%) & 78.8\% (78.8\%) & 39.1\%& 77.1\% (9.9\%) & 78.8\% ($<$0.1\%)
				\\
				& 0.10& 32.4\% & $<$0.1\%& 77.2\% (44.8\%) & 94.2\% (94.2\%) & 77.1\%& 83.0\% (5.8\%) & 94.2\% ($<$0.1\%)
				\\
				TWITTER  & 0.01& 83.8\% & 0.4\%& 86.0\% (2.1\%) & 63.6\% (63.1\%) & 24.4\%& 91.9\% (5.9\%) & 63.6\% (0.1\%)
				\\
				$(\rho=21.7)$& 0.05& 57.9\% & $<$0.1\%& 76.6\% (18.7\%) & 84.3\% (84.3\%) & 67.5\%& 84.1\% (7.5\%) & 84.3\% ($<$0.1\%)
				\\
				& 0.10& 43.1\% & $<$0.1\%& 77.3\% (34.2\%) & 91.3\% (91.3\%) & 83.3\%& 83.7\% (6.4\%) & 91.3\% ($<$0.1\%)
				\\
				EPINIONS & 0.01& 95.9\% & 10.8\%& 96.0\% (0.2\%) & 39.0\% (28.3\%) & 4.1\%& 97.6\% (1.5\%) & 39.4\% (0.3\%)
				\\
				$(\rho=6.4)$& 0.05& 87.7\% & 0.1\%& 91.3\% (3.6\%) & 67.7\% (67.6\%) & 52.5\%& 94.5\% (3.3\%) & 67.7\% ($<$0.1\%)
				\\
				& 0.10& 81.2\% & $<$0.1\%& 87.6\% (6.4\%) & 70.7\% (70.7\%) & 65.9\%& 91.9\% (4.3\%) & 70.8\% ($<$0.1\%)
				\\
				\hline
		\end{tabular}}
	\end{table}

	As it can be seen in Table \ref{table: RC_ICM}, when the network has a small \revv{connectivity} $\rho$ or activation probability $p$, the singleton nodes (nodes without any incoming arc) are more likely to appear in the live-arc graphs.
	As a result, the SNA can eliminate a considerably large numbers of variables and \revv{elements in the reachability sets}.
	In contrast, the SCNA is more effective in eliminating the variables and \revv{elements in the reachability sets} when the network has a relatively large value of $\rho$ or $p$.
	This is reasonable since as $\rho$ or $p$ increases, the original live-arc graphs contain more arcs, and as a result, \revv{more nodes are likely to be strongly connected with other nodes}.
	As for the numbers of nodes and arcs in the compact live-arc graphs, we can observe that they are much smaller than those in the original live-arc graphs, and \revv{in general}, the larger $\rho$ or $p$ is, the more reductions are detected by the SCNA.
	%For instance, the SCNA is more effective for networks MSG and FACEBOOK as they have a relatively large value $\rho$.
	\revv{For the INA,
		%since the isomorphic nodes inside each scenario have been identified by the SCNA, the additional reductions come from the detection of the isomorphic nodes among different scenarios.
		%
		%Nevertheless,
		we can observe a clear reduction on the number of variables (beyond the SCNA).}
	However, \revv{in most cases}, the reduction on the number of  \revv{elements in the reachability sets} is relatively small, which is due to the fact that most eliminated variables are associated with small reachability sets as we set $\maxlength=8$ in Algorithm \ref{algorithm_INA} (in \revv{Section} \ref{subsec:parameter}, we will perform numerical experiments to confirm that setting $\maxlength=8$ in Algorithm \ref{algorithm_INA} is enough to identify almost all pairs of isomorphic nodes).

	\begin{table}[h]
		\footnotesize
		\centering
		\caption{Comparison of effectiveness of the INA with different numbers of scenarios (for the IMP under the ICM).}\label{table: RC_ICM0.1}
		\setlength{\tabcolsep}{3pt}
		\renewcommand{\arraystretch}{1.5}
		\setlength{\tabcolsep}{1.2mm}{
			\begin{tabular}{|ll|ll|ll|ll|}
				\hline
				%			& &
				%			\multicolumn{2}{c|}{$\mathbf{INA}$} \\
				\makecell[l]{Network} & \makecell[l]{$|\Omega|$} & \makecell[l]{$\DeltaZINA$} & \makecell[l]{$\DeltaRINA$}
				&
				\makecell[l]{Network} & \makecell[l]{$|\Omega|$} & \makecell[l]{$\DeltaZINA$} & \makecell[l]{$\DeltaRINA$}
				\\
				\hline
				MSG      & 250 & 1.22\% & 0.01\%
				& FACEBOOK & 250 & 2.40\%&$<$0.01\%
				\\
				& 500 & 1.32\% & 0.01\%
				&  & 500 & 2.67\%&$<$0.01\%
				\\
				& 1000& 1.40\% & 0.01\%
				&  & 1000 & 2.88\%&0.01\%
				\\
				GNU      & 250 & 8.32\% & 0.05\%
				& DEEZER & 250 & 4.65\%&$<$0.01\%
				\\
				& 500 & 9.58\% & 0.06\%
				&  & 500 & 5.28\%&$<$0.01\%
				\\
				& 1000& 10.69\% & 0.07\%
				&  & 1000 & 5.81\%&$<$0.01\%
				\\
				HEP      & 250 & 14.73\% & 0.12\%
				& TWITTER & 250 & 5.21\%&$<$0.01\%
				\\
				& 500 & 15.62\% & 0.12\%
				&  & 500 & 5.85\%&$<$0.01\%
				\\
				& 1000& 16.28\% & 0.13\%
				&  & 1000 & 6.38\%&$<$0.01\%
				\\
				ENRON    & 250 & 6.44\% & 0.01\%
				& EPINIONS & 250 & 3.85\%&$<$0.01\%
				\\
				& 500 & 6.92\% & 0.01\%
				&  & 500 & 4.12\%&$<$0.01\%
				\\
				& 1000& 7.27\% & 0.01\%
				&  & 1000 & 4.33\%&0.01\%
				\\
				\hline
		\end{tabular}}
	\end{table}
	
	As discussed in the end of \revv{Section} \ref{sect: INA}, with the increasing number of \revv{scenarios} in the IMP,
	a node in a live-arc graph $\bar{\mathcal{G}}^\omega$ is more likely to be aggregated by other isomorphic nodes (in other live-arc graphs).
	Table \ref{table: RC_ICM0.1} further reports the reductions on the problem size of the SMCLP formulation \eqref{IMP} with different numbers of \revv{scenarios} under setting \INA.
	%We only report results for the case $p=0.01$ as in this case, the INA detect more reductions on the number of elements in the reachability sets; see column $\DeltaRINA$ under setting $\INA$ of Table \ref{table: RC_ICM}.
	For convenience, we only report results for the case $p=0.1$ as the results for the other two cases are similar.
	\revv{As observed from Table \ref{table: RC_ICM0.1}, $\DeltaZINA$ and $\DeltaRINA$ tend to slightly increase with the increasing of $|\Omega|$.
		This shows that for the IMP under the ICM, the reductions derived by the INA slightly increase with the increasing number of scenarios.}

	We now evaluate the performance improvement of  the integration of the SCNA and INA with the BD algorithm.
	In Table \ref{table: OP_ICM}, for each of the eight networks, we report the total number of instances that can be solved within the time limit (\sharpS), the average CPU time in seconds (\Time), the average number of  \revv{branch-and-cut nodes}  (\sharpN),
	the average number of added Benders optimality cuts (\sharpC), \revv{the average separation time in seconds (\SepaTime), and the average presolving time in seconds (\PTime)}.
	%
	%In this paper, we use a shift of $1$ in computing all geometric means.
	%
	Notice that the CPU time \Time} includes the presolving time \PTime spent on applying the SCNA/INA \revv{and the separation time \SepaTime}.
\revv{For instances that cannot be solved by any of the three settings, we report the average relative gap (\Gap) in percentage,
defined as $100 \times \frac{{\text{UB}}- {\text{LB}}}{{\text{UB}}}$ where $\text{LB}$ and $\text{UB}$ are the lower bounds and upper bounds returned by CPLEX. ``--'' in a row of Table \ref{table: OP_ICM} denotes that all instances of the corresponding network can be solved by at least one of the three settings.}
Detailed statistics of these results can be found in Tables 1a-8a of \cite{Chen2022}.
As it can be seen in Table \ref{table: OP_ICM}, the performance of setting $\SCNA$ is much better than that of setting $\Default$, especially for instances with large and  \revv{well-connected} networks.
In total, setting $\SCNA$ can solve $1320$ instances with a CPU time of $\revv{289.1}$ seconds,
while setting $\Default$ can only solve $1122$ instances with a CPU time of $\revv{675.1}$ seconds.
\revv{For unsolved instances, the average relative gap reduces from $7.0\%$ to $0.1\%$.}
\revv{The main improvement comes from efficiency of the separation for the Benders optimality cuts since by Proposition \ref{bcnotchanged}, we know that before and after applying the SCNA method, the Benders optimality cuts are identical, and hence the path of the search tree must  be identical.}
The latter is further confirmed by the results of networks MSG and EPINIONS where the numbers of added Benders \revv{optimality} cuts and \revv{branch-and-cut nodes} are identical under settings $\Default$ and $\SCNA$.
Notice that it is reasonable to observe that the numbers of added Benders \revv{optimality} cuts and \revv{branch-and-cut nodes} in other networks are different since some instances cannot be solved within the time limit.

As for setting $\INA$, we observe that \revv{it slightly outperforms setting $\SCNA$}.
In total, setting $\INA$ can solve $11$ more instances than setting $\SCNA$, with the CPU time decreasing from  $\revv{289.1}$  seconds to $\revv{249.5}$ seconds.
This is consistent with the former results in Table \ref{table: RC_ICM} in which only isomorphic nodes with small reachability sets can be detected by the INA and hence its contribution to speed up the solution procedure is not very large.

\revv{From the above results}, we can conclude that for the IMP under the ICM, (i) the SCNA can effectively reduce \revv{the numbers of variables $\boldsymbol{z}$ and elements in the reachability sets and} the sizes of networks and hence is beneficial to solving the IMP especially when the network is large and  \revv{well-connected}; and
(ii) \revv{the INA can further remove a fairly large fraction of variables $\boldsymbol{z}$ from the SMCLP formulation and slightly speed up the solution procedure}.

\begin{sidewaystable}[!htbp]
\centering
\footnotesize
\caption{Performance improvement through  applying the SCNA and INA (for the IMP under the ICM). The best results of {\texttt{\#S}} and {\texttt{T}} among different settings are printed in boldface.
}\label{table: OP_ICM}
\setlength{\tabcolsep}{3pt}
\renewcommand{\arraystretch}{1.5}
\begin{tabular}{|l|llllll|lllllll|lllllll|}
	\hline
	&\multicolumn{6}{c|}{$\Default$}
	&\multicolumn{7}{c|}{$\SCNA$}
	&\multicolumn{7}{c|}{$\INA$}\\
	Network &\makecell[l]{$\sharpS$} &
	\makecell[l]{$\Time$}& $\Gap$  &
	\makecell[l]{$\sharpN$}
	& \makecell[l]{$\sharpC$}&
	$\SepaTime$ &\makecell[l]{$\sharpS$} &
	\makecell[l]{$\Time$}&$\Gap$  & \makecell[l]{$\sharpN$}
	& \makecell[l]{$\sharpC$}&
	$\SepaTime$ &
	\makecell[l]{$\PTime$} &\makecell[l]{$\sharpS$} &
	\makecell[l]{$\Time$}& $\Gap$ & \makecell[l]{$\sharpN$}
	& \makecell[l]{$\sharpC$}&
	$\SepaTime$ &
	\makecell[l]{$\PTime$} \\
	\hline
	
	MSG     & \textbf{180 }& 8.0    & -- & 2   & 1699 & 5.4    & \textbf{180 }& 3.3    & --& 2   & 1699 & 1.2    & 0.3    & \textbf{180 }& \textbf{2.9   } & -- & 0   & 994
	& 1.1    & 0.4    \\
	GNU     & 171 & 98.1   & 0.1 & 18  & 1625 & 51.9   & 172 & 71.8   & $<$0.1 & 18  & 1645 & 27.5   & 1.7    & \textbf{173 }& \textbf{64.0  } & $<$0.1 & 16  & 1599
	& 25.5   & 3.6    \\
	HEP     & 171 & 227.5  & 0.1 & 31  & 4702 & 128.6  & 176 & 138.0  & $<$0.1 & 33  & 4716 & 41.9   & 2.3    & \textbf{177 }& \textbf{95.1  } & $<$0.1 & 33  & 4301
	& 35.1   & 4.0    \\
	ENRON   & 165 & 1138.3 & 0.6 & 14  & 4898 & 731.1  & \textbf{166 }& 484.4  & 0.6 & 14  & 4910 & 230.6  & 5.3    & \textbf{166 }& \textbf{366.8 } & 0.5 & 15  & 5325
	& 209.7  & 7.9    \\
	FACEBOOK& 56  & 5455.0 & 44.5 & 2   & 675  & 4959.4 & 137 & 1855.9 & $<$0.1 & 56  & 2836 & 1431.4 & 9.6    & \textbf{142 }& \textbf{1667.0} & $<$0.1 & 59  & 2524
	& 1343.5 & 14.2   \\
	DEEZER  & 92  & 2866.1 & 11.2 & 1   & 569  & 2367.7 & 144 & 1029.0 & $<$0.1 & 14  & 1624 & 724.2  & 11.0   & \textbf{148 }& \textbf{995.5 } & $<$0.1 & 13  & 1645
	& 711.2  & 21.1   \\
	TWITTER & 107 & 5825.9 & 0.2 & 2   & 550  & 4851.4 & \textbf{165 }& 1456.5 & 0.2 & 2   & 3863 & 973.3  & 17.2   & \textbf{165 }& \textbf{1414.6} & 0.1 & 2   & 3641
	& 962.9  & 29.3   \\
	EPINIONS& \textbf{180 }& 2061.5 & -- & 1   & 2200 & 1812.7 & \textbf{180 }& 851.8  & -- & 1   & 2200 & 664.5  & 20.8   & \textbf{180 }& \textbf{734.1 } & -- & 2   & 3060& 610.1 & 26.1\\
	TOTAL & 1122 & 675.1 & 7.0 & 5 & 1527 & 489.0 & 1320 & 289.1 & 0.1 & 10 & 2663 & 158.5 & 5.6 & \textbf{1331} & \textbf{249.5} & 0.1 & 9 & 2531 & 147.7 & 8.7\\
	\hline
\end{tabular}
\end{sidewaystable}

\subsection{Results for the IMP under the LTM}\label{sect: ELTM}

In this subsection, we present similar computational results for the IMP under the LTM in Tables \ref{table: RC_LTM}-\ref{table: OP_LTM}.
To begin with, we note that for node $j$ in graph $\mathcal{G}$, if it is a singleton node (i.e., it does not have any incoming arc),
then in each live-arc graph $\mathcal{G}^{\omega}$, it is also a singleton node;
otherwise, it has at most one incoming arc in each live-arc graph $\mathcal{G}^{\omega}$ (indeed, in the tested instances, it has exactly one incoming arc as $\sum_{i\,:\,(i,j)\in\A}b_{ij}=1$ holds; see \revv{Section} \ref{settings}).
As a result, for any pair of nodes $i_1$ and $i_2 $ in  live-arc graph $\mathcal{G}^{\omega}$,  there exists at most one directed path from node $i_1$ to node $i_2$.
This implies that in graph $\mathcal{G}^\omega$, the subgraph induced by the nodes in SCC $\mathcal{SC}_u^\omega$ with $|\mathcal{SC}_u^\omega| \geq 2$ must be a single circle
(which is in sharp contrast to the IMP under the ICM in which the subgraph induced by the nodes in SCC $\mathcal{SC}_u^\omega$ with $|\mathcal{SC}_u^\omega| \geq 2$ can be a union of multiple circles).
\revv{This property for the IMP under the LTM, however, implies that only a small portion of nodes are likely to be strongly connected with other nodes, and the reachability sets of the nodes are also likely to be small.}
Consequently, through applying the SCNA, we can only observe a mild reduction on the numbers of nodes and arcs and the numbers of variables and  \revv{elements in the reachability sets} in Table \ref{table: RC_LTM}.
However, due to the small sizes of the reachability sets, more nodes are likely to be isomorphic among different scenarios and hence the INA can detect more reductions, as compared to the IMP under the ICM.
This is shown in Table \ref{table: RC_LTM} in which we observe a fairly large reduction on the number of variables by applying the INA.
The reduction on the number of \revv{elements in the reachability sets} is relatively small which can be explained by the fact that most eliminated variables by the INA are associated with small reachability sets as $\maxlength$ is set to $4$ in our implementation (see \revv{Section} \ref{subsec:parameter} further ahead for the reason of setting $\maxlength=4$).
Notice that in Table \ref{table: RC_LTM}, the reduction detected by the SNA ($\Default$) is marginal since, as it has been mentioned, a node is a singleton node in live-arc graph $\mathcal{G}^{\omega}$ if and only if it is a singleton node in the original graph $\mathcal{G}$.
Therefore, the reductions by applying the SNA for the IMP under the LTM totally depend on the number of singleton nodes in network $\mathcal{G}$, which is very small in most cases.
Indeed, only network EPINIONS contains a relatively large percentage of singleton nodes $(35.9\%)$; see column $\DeltaZ$ \rev{under setting $\Default$} in Table \ref{table: RC_LTM}.
\revv{For the IMP under the LTM, with the increasing number of scenarios, a relatively large increase on the reductions by the INA can be observed, as shown in Table \ref{table: RC_LTM2}.}

\begin{table}[H]
\footnotesize
\centering
\caption{\revv{The reductions on the sizes of live-arc graphs and numbers of variables $\boldsymbol{z}$ and elements in the reachability sets through applying the SNA, SCNA, and INA (for the IMP  under the LTM).}}\label{table: RC_LTM}
\setlength{\tabcolsep}{3pt}
\renewcommand{\arraystretch}{1.5}
\setlength{\tabcolsep}{1.0mm}{
	\begin{tabular}{|l|ll|lll|ll|}
		\hline
		&
		\multicolumn{2}{c|}{$\Default$} &
		\multicolumn{3}{c|}{$\SCNA$} &
		\multicolumn{2}{c|}{$\INA$} \\
		\makecell[l]{Network} & \makecell[l]{$\DeltaZ$} & \makecell[l]{$\DeltaR$}
		&
		$\DeltaZ$~($\DeltaZSCNA/\DeltaV$) & \makecell[l]{$\DeltaR$~($\DeltaRSCNA$)}&
		$\DeltaA$
		&
		\makecell[l]{$\DeltaZ$~($\DeltaZINA$)} & \makecell[l]{$\DeltaR$~($\DeltaRINA$)}
		\\
		\hline
		MSG     & 1.9\% & 0.2\%& 4.3\% (2.4\%) & 20.2\% (20.0\%) & 4.6\%& 14.5\% (10.2\%) & 22.3\% (2.1\%)
		\\
		GNU     & $<$0.1\% & $<$0.1\%& 5.9\% (5.9\%) & 13.4\% (13.4\%) & 11.8\%& 26.2\% (20.3\%) & 18.1\% (4.7\%)
		\\
		HEP     & $<$0.1\% & $<$0.1\%& 22.7\% (22.7\%) & 38.2\% (38.2\%) & 42.6\%& 71.7\% (48.9\%) & 60.3\% (22.1\%)
		\\
		ENRON   & $<$0.1\% & $<$0.1\%& 8.7\% (8.7\%) & 18.4\% (18.4\%) & 16.0\%& 31.3\% (22.7\%) & 23.0\% (4.6\%)
		\\
		FACEBOOK& $<$0.1\% & $<$0.1\%& 2.9\% (2.9\%) & 10.7\% (10.7\%) & 5.5\%& 7.2\% (4.4\%) & 11.0\% (0.3\%)
		\\
		DEEZER  & $<$0.1\% & $<$0.1\%& 4.4\% (4.4\%) & 9.5\% (9.5\%) & 8.5\%& 11.6\% (7.2\%) & 10.2\% (0.7\%)
		\\
		TWITTER & $<$0.1\% & $<$0.1\%& 3.3\% (3.3\%) & 24.3\% (24.3\%) & 5.4\%& 8.4\% (5.2\%) & 24.9\% (0.6\%)
		\\
		EPINIONS& 35.9\% & 6.0\%& 37.7\% (1.8\%) & 13.1\% (7.1\%) & 5.5\%& 57.7\% (20.0\%) & 20.5\% (7.5\%)
		\\
		\hline
\end{tabular}}
\end{table}

\begin{table}[H]
\footnotesize
\centering
\caption{Comparison of effectiveness of the INA with different numbers of scenarios (for the IMP under the LTM).}\label{table: RC_LTM2}
\setlength{\tabcolsep}{3pt}
\renewcommand{\arraystretch}{1.5}
\setlength{\tabcolsep}{1.2mm}{
	\begin{tabular}{|ll|ll|ll|ll|}
		\hline
		%			& &
		%			\multicolumn{2}{c|}{$\mathbf{INA}$} \\
		\makecell[l]{Network} & \makecell[l]{$|\Omega|$} & \makecell[l]{$\DeltaZINA$} & \makecell[l]{$\DeltaRINA$}
		&
		\makecell[l]{Network} & \makecell[l]{$|\Omega|$} & \makecell[l]{$\DeltaZINA$} & \makecell[l]{$\DeltaRINA$}
		\\
		\hline
		MSG      & 250 & 7.11\% & 1.37\%
		& FACEBOOK & 250 &3.10\% &0.17\%
		\\
		& 500 & 8.61\% & 1.69\%
		&  & 500 & 3.73\%&0.22\%
		\\
		& 1000& 10.19\% & 2.05\%
		&  & 1000 &4.37\% &0.26\%
		\\
		GNU      & 250 & 14.82\% & 3.21\%
		& DEEZER & 250 & 4.82\%&0.45\%
		\\
		& 500 & 17.68\% & 3.95\%
		&  & 500 & 5.99\%&0.58\%
		\\
		& 1000& 20.31\% & 4.68\%
		&  & 1000 & 7.20\%&0.73\%
		\\
		HEP      & 250 & 45.08\% & 19.88\%
		& TWITTER & 250 & 3.52\%&0.40\%
		\\
		& 500 & 47.32\% & 21.16\%
		&  & 500 & 4.35\%&0.50\%
		\\
		& 1000& 48.93\% & 22.11\%
		&  & 1000 & 5.18\%&0.61\%
		\\
		ENRON    & 250 & 17.38\% & 3.38\%
		& EPINIONS & 250 & 18.10\%&6.59\%
		\\
		& 500 & 20.04\% & 4.00\%
		&  & 500 & 19.10\%&7.05\%
		\\
		& 1000& 22.66\% & 4.62\%
		&  & 1000 & 19.98\%&7.46\%
		\\
		\hline
\end{tabular}}
\end{table}

We now present the overall performance improvement of integrating the SCNA and INA into the BD algorithm in Table \ref{table: OP_LTM}.
Detailed statistics of these results can be found in Tables 1b-8b of \cite{Chen2022}.
From Table \ref{table: OP_LTM}, we can see that the performance of settings $\SCNA$ and $\INA$ is slightly better than setting $\Default$.
%
%Indeed, we only observe a minor improvement on the average CPU time ($\rm{T}$) and the number of solved instances ($\texttt{\#}\rm{S}$) through apTimeplying the SCNA and INA.
\revv{Indeed, we only observe a minor improvement on the average CPU time (\Time), the number of solved instances (\sharpS), and the average relative gap of the unsolved instances (\Gap) through applying the SCNA and INA.}
This can be explained by the reasons that (i) the reduction on the sizes of networks through applying the SCNA is small (as shown in Table \ref{table: RC_LTM});
(ii) the time spent in implementing the SCNA and INA is relative  large (as shown in column \PTime in Table \ref{table: OP_LTM}); and
(iii) only isomorphic nodes with small reachability sets can be detected by the INA (as shown in column \revv{$\DeltaRINA$ under} setting $\INA$ in Table \ref{table: RC_LTM}).
In addition, we note from Table \ref{table: OP_LTM} that the IMPs under the LTM are \rev{generally} much easier than those under the ICM.
In total, under the LTM, only 16 among 480 instances (3.3\%) cannot be solved by setting $\Default$ within the given time limit
while 318 among 1440 instances (22.1\%) cannot be solved by the same setting under the ICM.

\begin{sidewaystable}[!htbp]
\caption{Performance improvement through  applying the SCNA and INA (for the IMP under the LTM). The best results of {\texttt{\#S}} and {\texttt{T}} among different settings are printed in boldface.}\label{table: OP_LTM}
\centering
\footnotesize
\setlength{\tabcolsep}{3pt}
\renewcommand{\arraystretch}{1.5}
\begin{tabular}{|l|llllll|lllllll|lllllll|}
	\hline
	&\multicolumn{6}{c|}{$\Default$}
	&\multicolumn{7}{c|}{$\SCNA$}
	&\multicolumn{7}{c|}{$\INA$}\\
	Network &\makecell[l]{$\sharpS$} &
	\makecell[l]{$\Time$}& $\Gap$  &
	\makecell[l]{$\sharpN$}
	& \makecell[l]{$\sharpC$}&
	$\SepaTime$ &\makecell[l]{$\sharpS$} &
	\makecell[l]{$\Time$}&$\Gap$  & \makecell[l]{$\sharpN$}
	& \makecell[l]{$\sharpC$}&
	$\SepaTime$ &
	\makecell[l]{$\PTime$} &\makecell[l]{$\sharpS$} &
	\makecell[l]{$\Time$}& $\Gap$ & \makecell[l]{$\sharpN$}
	& \makecell[l]{$\sharpC$}&
	$\SepaTime$ &
	\makecell[l]{$\PTime$} \\
	\hline
	MSG     & \textbf{60  }& 18.5   & -- & 10  & 4240 & 5.9    & \textbf{60  }& \textbf{17.7}   & -- & 10  & 4240 & 5.0    & 0.4    & \textbf{60  }& \textbf{17.7  } & -- & 9   & 4167
	& 5.0    & 0.8    \\
	GNU     & \textbf{60  }& 53.2   & -- & 1   & 1818 & 17.9   & \textbf{60  }& \textbf{49.1  } & -- & 1   & 1818 & 12.4   & 2.4    & \textbf{60  }& 50.5   & -- & 1   & 1853
	& 12.6   & 6.7    \\
	HEP     & \textbf{60  }& 85.6   & -- & 0   & 2664 & 22.7   & \textbf{60  }& 77.0   &-- & 0   & 2664 & 13.3   & 3.0    & \textbf{60  }& \textbf{52.0  } & -- & 0   & 2557
	& 12.7   & 9.9    \\
	ENRON   & \textbf{60  }& 184.1  & -- & 0   & 2048 & 77.7   & \textbf{60  }& 165.0  & -- & 0   & 2048 & 54.6   & 8.0    & \textbf{60  }& \textbf{164.7 } & -- & 0   & 1985
	& 54.8   & 30.5   \\
	FACEBOOK& \textbf{49  }& 1482.5 & 1.3 & 27  & 6634 & 805.0  & \textbf{49  }& 1299.3 & 1.2 & 27  & 6693 & 602.9  & 15.7   & \textbf{49  }& \textbf{1293.8} & 1.1 & 28  & 6791
	& 599.2  & 22.9   \\
	DEEZER  & \textbf{60  }& 289.2  & -- & 0   & 1020 & 146.6  & \textbf{60  }& \textbf{258.1 } & -- & 0   & 1020 & 102.2  & 18.4   & \textbf{60  }& 269.4  & -- & 0   & 1020
	& 102.5  & 31.0   \\
	TWITTER & 55  & 1284.7 & 1.5 & 8   & 4994 & 523.3  & 56  & 1173.5 & 1.5 & 8   & 5036 & 393.6  & 23.2   & \textbf{58  }& \textbf{1155.3} & 0.6 & 8   & 5125
	& 392.4  & 33.4   \\
	EPINIONS& \textbf{60  }& 812.2  & -- & 0   & 2441 & 458.2  & \textbf{60  }& 733.7  & -- & 0   & 2441 & 357.6  & 33.5   & \textbf{60  }& \textbf{631.6 } & -- & 0   & 2411
	& 357.1  & 67.7   \\
	TOTAL & 464 & 228.2 & 1.3 & 2 & 2770 & 94.6 & 465 & 207.2 & 1.2 & 2 & 2776 & 68.8 & 8.1 & \textbf{467} & \textbf{194.9} & 1.0 & 2 & 2758 & 68.5 & 16.5\\
	\hline
\end{tabular}
\end{sidewaystable}

\subsection{Selection of parameter $\maxlength$}
\label{subsec:parameter}

As it has been mentioned in \revv{Section} \ref{subsect: im_pre}, $\maxlength$ is a parameter to achieve a trade-off between the effectiveness and efficiency of implementing the INA: the larger the parameter $\maxlength$, the more isomorphic nodes that might be identified and the higher the computational complexity.
Therefore, in this subsection, we compare the performance of different selections of parameter $\maxlength$.
Tables \ref{table: hashlengthICM} and \ref{table: hashlengthLTM} report the computational results for the IMP under the ICM and LTM, respectively.
For simplicity, we only report the results for the case $|\Omega|=1000$ (for the ICM, we only report the results for the case $p=0.1$).
In the two tables, we use $\Mem_0$ to represent the average memory consumption (in GB) of storing all reachability sets after removing those detected by the SCNA,
and $\Mem$ to denote the average memory consumption (in GB) of only storing the reachability sets with the size restriction.
Instead of storing the reachability sets to obtain the required memories $\Mem_0$ and $\Mem$ (which can be potentially very  large on large-scale networks),
we calculate the total number of elements of the stored reachability sets and convert it to the needed memory size.
In our experiments, we set the size restriction $\maxlength=2,4,8,1000,|\V|$, respectively.
Notice that when $\maxlength=|\V|$, we implement the INA without any size restriction on the reachability sets.
\revv{In Tables \ref{table: hashlengthICM} and \ref{table: hashlengthLTM}, $\TINA$ denotes the average runtime in seconds of implementing the INA.}
\revv{In Table \ref{table: hashlengthICM}, ``--'' (under column $\maxlength=|\V|$) indicates that due to the limited memory, we were not able to construct the whole hashing table to implement the INA.}

\begin{sidewaystable}[!htbp]
\caption{Comparison of effectiveness of the INA with different parameters $\maxlength$ (for the IMP under the ICM).}\label{table: hashlengthICM}
\centering
\small
\setlength{\tabcolsep}{3pt}
\renewcommand{\arraystretch}{1.3}
\setlength{\tabcolsep}{1.6mm}{
	\begin{tabular}{|ll|lll|lll|lll|lll|lll|}
		\hline
		\multicolumn{2}{|c|}{$\maxlength$} & \multicolumn{3}{c|}{$2$}
		& \multicolumn{3}{c|}{$4$}
		& \multicolumn{3}{c|}{$8$}
		& \multicolumn{3}{c|}{$1000$}
		& \multicolumn{3}{c|}{$|\mathcal{V}|$}
		\\
		\hline
		\makecell[l]{Network} &  $\Mem_0$ &
		\makecell[l]{$\Mem$} &
		\makecell[l]{$\TINA$} & \makecell[l]{$\DeltaZINA$} &
		\makecell[l]{$\Mem$} &
		\makecell[l]{$\TINA$} & \makecell[l]{$\DeltaZINA$}&
		\makecell[l]{$\Mem$} &
		\makecell[l]{$\TINA$} & \makecell[l]{$\DeltaZINA$} &
		\makecell[l]{$\Mem$} &
		\makecell[l]{$\TINA$} & \makecell[l]{$\DeltaZINA$} &
		\makecell[l]{$\Mem$} &
		\makecell[l]{$\TINA$} & \makecell[l]{$\DeltaZINA$} \\
		\hline
		MSG      & 1.0  &$<$0.1&$<$0.1  &   1.3\%&$<$0.1&$<$0.1  &   1.4\%&$<$0.1&$<$0.1  &   1.4\%&1.0&1.7  &   1.4\%&1.0&1.7  &   1.4\%\\
		GNU      & 8.9  &$<$0.1&1.6  &  8.1\%&$<$0.1&4.2  &  10.7\%&$<$0.1&6.3  &  10.7\%&0.1&7.3  &  10.7\%&8.9&20.1 &  10.7\%\\
		HEP      & 5.5  &$<$0.1&2.3  &  10.8\%&$<$0.1&4.0  &  15.5\%&$<$0.1&5.1  &  16.3\%&0.1&5.4  &  16.3\%&5.5&12.7 &  16.3\%\\
		ENRON    & 147.9&$<$0.1&3.3  &  5.5\%&$<$0.1&5.3  &  7.2\%&$<$0.1&5.8  &  7.3\%&0.1&6.1  &  7.3\%&147.9&170.6&  7.3\%\\
		FACEBOOK & 753.1&$<$0.1&2.9  &  2.5\%&$<$0.1&4.0  &  2.9\%&$<$0.1&4.1  &  2.9\%&$<$0.1&4.2  &  2.9\%&753.1 &--   &    --\\
		DEEZER   & 911.7&$<$0.1&6.7  &  4.9\%&0.1&11.5 &  5.8\%&0.1&12.1 &  5.8\%&0.1&12.9 &  5.8\%&911.7 &--   &    --\\
		TWITTER  & 1520.0&0.1&11.6 &  5.2\%&0.1&20.8 &  6.3\%&0.1&24.6 &  6.4\%&0.3&27.8 &  6.4\%&1520.0 &--   &    --\\
		EPINIONS & 662.5&0.1&11.2 &  3.8\%&0.1&15.5 &  4.3\%&0.1&16.4 &  4.3\%&0.1&17.2 &  4.3\%&662.5 &--   &    --\\
		\hline
\end{tabular}}
\vspace{2\baselineskip}
\caption{Comparison of effectiveness of the INA with different parameters $\maxlength$ (for the IMP under the LTM).}\label{table: hashlengthLTM}
\setlength{\tabcolsep}{3pt}
\renewcommand{\arraystretch}{1.3}
\setlength{\tabcolsep}{2mm}{
	\begin{tabular}{|ll|lll|lll|lll|lll|lll|}
		\hline
		\multicolumn{2}{|c|}{$\maxlength$} & \multicolumn{3}{c|}{$2$}
		& \multicolumn{3}{c|}{$4$}
		& \multicolumn{3}{c|}{$8$}
		& \multicolumn{3}{c|}{$1000$}
		& \multicolumn{3}{c|}{$|\mathcal{V}|$}
		\\
		\hline
		\makecell[l]{Network} & $\Mem_0$ &
		\makecell[l]{$\Mem$} &
		\makecell[l]{$\TINA$} & \makecell[l]{$\DeltaZINA$} &
		\makecell[l]{$\Mem$} &
		\makecell[l]{$\TINA$} & \makecell[l]{$\DeltaZINA$}&
		\makecell[l]{$\Mem$} &
		\makecell[l]{$\TINA$} & \makecell[l]{$\DeltaZINA$} &
		\makecell[l]{$\Mem$} &
		\makecell[l]{$\TINA$} & \makecell[l]{$\DeltaZINA$} &
		\makecell[l]{$\Mem$} &
		\makecell[l]{$\TINA$} & \makecell[l]{$\DeltaZINA$} \\
		\hline
		MSG      & 0.1  &$<$0.1&0.1  &   2.4\%&$<$0.1&1.0  &  10.2\%&$<$0.1&5.5  &  10.7\%&0.1&12.6 &  10.7\%&0.1&12.6 &  10.7\%\\
		GNU      & 0.4  &$<$0.1&1.1  &   5.5\%&0.1&10.3 &  20.3\%&0.1&40.3 &  21.2\%&0.4&71.8 &  21.2\%&0.4&72.0 &  21.2\%\\
		HEP      & 0.2  &$<$0.1&3.4  &  17.2\%&0.1&15.8 &  48.9\%&0.2&37.6 &  55.4\%&0.2&45.9 &  55.4\%&0.2&45.9 &  55.4\%\\
		ENRON    & 1.3  &$<$0.1&3.5  &   6.1\%&0.2&64.5 &  22.7\%&0.4&162.8&  24.4\%&1.3&359.0&  24.4\%&1.3&359.2&  24.4\%\\
		FACEBOOK & 5.3  &$<$0.1&2.4  &   2.0\%&0.1&17.3 &   4.4\%&0.3&62.3 &   4.5\%&5.3&348.4&   4.5\%&5.3&348.5&   4.5\%\\
		DEEZER   & 3.5  &$<$0.1&4.1  &   3.2\%&0.1&31.6 &   7.2\%&0.4&118.2&   7.4\%&3.5&359.7&   7.4\%&3.5&361.5&   7.4\%\\
		TWITTER  & 5.0  &$<$0.1&2.5  &   1.4\%&0.1&24.3 &   5.2\%&0.6&136.7&   5.9\%&5.0&602.1&   5.9\%&5.0&619.7&   5.9\%\\
		EPINIONS & 3.0  &0.2&23.5 &  12.2\%&0.5&82.0 &  20.0\%&1.0&312.2&  20.5\%&3.0&708.5&  20.5\%&3.0&713.8&  20.5\%\\
		\hline
\end{tabular}}
\\
\end{sidewaystable}

For the ICM, Table \ref{table: hashlengthICM} shows that it requires a prohibitively large memory $\Mem_0$ to store all the reachability sets.
However, when restricting the size of the considered reachability sets to a small value of $\maxlength$,
the memory overhead significantly reduces; see column $\Mem$ in Table \ref{table: hashlengthICM}.
In addition, with the increasing value of $\maxlength$, the improvement on the percentage of the eliminated variables $\DeltaZINA$ becomes smaller and smaller.
Indeed, for networks MSG, GNU, HEP, and ENRON, the proposed algorithm with $\maxlength = 8$ can identify almost all isomorphic nodes as those with $\maxlength = |\V|$.
For the other four networks, setting $\maxlength = 8$ enables to identify almost the same amount of isomorphic nodes as those obtained by setting $\maxlength=1000$.
%
%We also note that for some large-scale networks, most reachability sets have sizes that are larger than $10000$.

We now discuss the results for the IMP under the LTM in Table \ref{table: hashlengthLTM}.
On one hand, for the IMP under the LTM, since there exists at most one incoming arc for each node in each live-arc graph, the sizes of the reachability sets are likely to be smaller than those for the IMP under the ICM.
This leads to a smaller total memory consumption $\Mem_0$ and a larger memory consumption $\Mem$ when restricting $\maxlength$ to a small value, as compared to those for the IMP under the ICM.
As a result, the computational overhead of implementing the INA is very high, even for a relatively small $\maxlength$ (e.g., $\maxlength=8$).
In addition, in analogy to the IMP under the ICM, with a small value of parameter $\maxlength$, the proposed algorithm can identify almost the same amount of isomorphic nodes as the case $\maxlength=|\V|$.
Therefore, for the IMP under the LTM, we choose $\maxlength=4$ in the implementation of the INA to achieve a trade-off between the performance and the time complexity.

	\section{Extensions}\label{sect: EGP}
	
	In this section, we investigate a generalization of the IMP \eqref{IMP}, which arises from many existing applications including the IMP and its variants \cite{Long2011,nguyen2013budgeted,Wu2019,Zhang2014}, and discuss \revv{the extensions of the proposed SCNA and INA to this generalization}.
	
	The considered generalization is of the form 
	\begin{equation}\label{EGP}
		\begin{aligned}
			\max_{\boldsymbol{y},\,\boldsymbol{z}}\ &f(\boldsymbol{y},\boldsymbol{z}) \\
			\hbox{s.t.}\ &\eqref{connectioncons},\eqref{ybincons},\eqref{zbincons},\\
			& \boldsymbol{y}\in\mathcal{Y},~ \boldsymbol{z}\in\mathcal{Z},
		\end{aligned}
	\end{equation}
	where $f$: $\{0,1\}^{|\V|}\times \{0,1\}^{|\V||\Omega|} \rightarrow \mathbb{R}$, $\mathcal{Y} \subseteq \mathbb{R}^{|\V|}$, and $\mathcal{Z} \subseteq \mathbb{R}^{|\V||\Omega|}$.
	Similar to the IMP \eqref{IMP}, problem \eqref{EGP} is built upon a finite number of live-arc graphs $\G^{\omega}=(\V,\A^{\omega}),~\omega\in\Omega$. 
	However, in contrast to the IMP \eqref{IMP}, problem \eqref{EGP} can flexibly allow any objective function and any constraint in sets $\mathcal{Y}$ and $\mathcal{Z}$.
	Indeed, the IMP \eqref{IMP} can be seen as a special case of problem \eqref{EGP} where 
	$f(\boldsymbol{y},\boldsymbol{z})=\sum_{\omega\in\Omega}p^{\omega}\sum_{i\in\V}z_i^{\omega}$,  $\mathcal{Y}=\left\{\boldsymbol{y}\in\mathbb{R}^{|\V|}\,:\,\sum_{j\in\V}y_j\leq K\right\}$, and $\mathcal{Z}=\mathbb{R}^{|\V||\Omega|}$.
	%sets $\mathcal{Y}$ and $\mathcal{Z}$ are introduced to encapsulate additional requirements on variables $\boldsymbol{y}$ and $\boldsymbol{z}$.
	%Finally, the optimization goal of \eqref{EGP} is to maximize function $f(\boldsymbol{y},\boldsymbol{z})$ which is used to represent various benefit values in applications.
	%Due to this flexibility, we are able to use problem \eqref{} to  
	Due to the flexibility, various variants of the IMP can also be seen as special cases of \eqref{EGP}.
	%For instances, when we take $\mathcal{Y}:=\{\boldsymbol{y}\in\mathbb{R}^{|\V|}\,:\,\sum_{j\in\V}y_j\leq K\}$, $\mathcal{Z}:=\mathbb{R}^{|\V||\Omega|}$, and $f(\boldsymbol{y},\boldsymbol{z}):=\sum_{\omega\in\Omega}p^{\omega}\sum_{i\in\V}z_i^{\omega}$, 
	%problem \eqref{EGP} reduces to the formal IMP \eqref{IMP}.
	For instance, by choosing $\mathcal{Y}=\left\{\boldsymbol{y}\in\mathbb{R}^{|\V|}\,:\,\sum_{j\in\V}c_jy_j\leq B\right\}$ and the same $\mathcal{Z}$ and $f(y,z)$ as that of the IMP, problem \eqref{EGP} reduces to the \emph{budgeted influence maximization problem} (BIMP) studied in \cite{nguyen2013budgeted}.
	Here $c_j$, $j \in \V$, is the cost of choosing node $j$ as a seed node and $B$ is the total budget.
	We next present another two special cases of problem \eqref{EGP}.
	\begin{itemize}
		\item The \emph{seed minimization problem} (SMP) \cite{Long2011}. In this problem, $f(\boldsymbol{y},\boldsymbol{z})=-\sum_{j\in\V}y_j$, $\mathcal{Y}=\mathbb{R}^{|\V|}$, and  $\mathcal{Z}=\left\{\boldsymbol{z}\in\mathbb{R}^{|\V||\Omega|}\,:\,\sum_{\omega\in\Omega}p^{\omega}\sum_{i\in\V}z_i^{\omega}\geq D\right\}$ ($D\in\mathbb{R}_{++}$). 
		This problem can be seen as a dual form of the IMP \eqref{IMP}, which minimizes the number of seed nodes with an expected influence coverage $D$ in a \revv{network}.
		\item The \emph{seed minimization problem with probabilistic influence coverage guarantee} (SMPPICG) \cite{Wu2019,Zhang2014}.
		In this problem, $f(\boldsymbol{y},\boldsymbol{z})=-\sum_{j\in\V}y_j$, $\mathcal{Y}=\mathbb{R}^{|\V|}$, $\mathcal{Z}=\text{Proj}_{\bz}(\mathcal{W}) $ where $\mathcal{W}= \left\{(\boldsymbol{z},\boldsymbol{\xi}) \in \mathbb{R}^{|\V||\Omega|}\times \{0,1\}^{|\Omega|}\,:\, \sum\limits_{i\in\V}z_i^{\omega}\geq D\xi^{\omega},~\forall~ \omega\in\Omega,~\sum\limits_{\omega\in\Omega}p^{\omega}\xi^{\omega}\geq 1-\varepsilon\right\}$ ($\varepsilon\in(0,1)$ is the confidence level).
		{Instead of ensuring an expected influence coverage threshold $D$, the problem requires to influence at least $D$ nodes with a probability at least $1-\varepsilon$.}
		%	\item Given $\mathcal{L}$ as the seed set of a leader competitor, if we take $\mathcal{Y}:=\{\boldsymbol{y}\in\mathbb{R}^{|\V|}\,:\,\sum_{j\in\V}y_j\leq K,y_j=0,\forall~ j\in\mathcal{L}\}$, $\mathcal{Z}:=\{\boldsymbol{z}\in\mathbb{R}^{|\V|}\,:\,z^{\omega}_i=0,\forall~ \omega\in\Omega, i\in\mathcal{L}\}$, and $F(\boldsymbol{y},\boldsymbol{z}):=\max_{\boldsymbol{v}}\{\sum_{\omega\in\Omega}p^{\omega}\sum_{i\in\V\backslash\mathcal{L}}v_i^{\omega}\,:\,\sum_{j\in\mathcal{N}^{\omega}(i)}z_j^{\omega}\geq v_i^{\omega},v_i^{\omega}\leq |\mathcal{N}^{\omega}(i)|(1-y_i),v_i^{\omega}\in\mathbb{Z}_+,\forall~ \omega\in\Omega,i\in\V\backslash\mathcal{L},\}$ where $j\in\mathcal{N}^{\omega}(i)$ means that node $j$ can be impressed by content received from node $i$, then problem \eqref{EGP} is studied as the \emph{competitive influence maximization problem} in \cite{nguyen2013budgeted}.
		%	It aims to decide the seed set of a follower in $\V\backslash\mathcal{L}$ such that follower's total impression on nodes is maximized with cardinality of seed nodes up to $K$.
		%	Refer to \cite{nguyen2013budgeted} for more details on the problem.
	\end{itemize}
	%Refer to \cite{Kahr2020,nguyen2017social} for more applications on the generalization.
	
	We next discuss \revv{the extensions of the proposed SCNA and INA  to problem \eqref{EGP}}.
	To proceed, we need the following two realistic assumptions.
	\begin{itemize}
		\item [(i)] $f(\boldsymbol{y},\boldsymbol{z})$ is nondecreasing with respect to variables $\boldsymbol{z}$, i.e.,  if $\bz^1,\bz^2 \in \mathcal{Z}$ and $\bz^1 \leq \bz^2 $, then $f(\boldsymbol{y},\boldsymbol{z}^1) \leq f(\boldsymbol{y},\boldsymbol{z}^2)$.
		\item [(ii)] Set $\mathcal{Z}$ is up-monotone, i.e., if $\bz^1 \in \mathcal{Z}$ and $\bz^1 \leq \bz^2 $, then ${\bz}^2 \in \mathcal{Z}$ as well (such a set is also called a reverse normal set \cite{Tuy2000}).
	\end{itemize}
	The two assumptions imply that when node $i$ is reachable in scenario $\omega$ from some seed nodes
	(i.e., $\sum_{j\in\R(\G^{\omega},i)}y_j\geq 1$), 
	activating node $i$ in scenario $\omega$ (i.e., setting $z_i^{\omega}:=1$)
	provides a better solution for  the decision maker while does not violate his/her requirement.
	It can be easily verified that for the IMP, BIMP, SMP, and SMPPICG, the two assumptions are satisfied.
	\begin{proposition}\label{theorem: EPG}
		Suppose that problem \eqref{EGP}, with assumptions (i) and (ii), has an optimal solution.
		Then there must exist an optimal solution $(\bar{\boldsymbol{y}},\bar{\boldsymbol{z}})$ such that  \eqref{projection_on_z_equation} holds.
	\end{proposition}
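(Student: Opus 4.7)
The plan is to take any optimal solution $(\boldsymbol{\bar{y}}, \boldsymbol{\hat{z}})$ of \eqref{EGP} and modify only the $\boldsymbol{z}$-component by defining $\bar{z}_i^{\omega} := \min\{1, \sum_{j\in\mathcal{R}(\mathcal{G}^{\omega},i)}\bar{y}_j\}$ for all $\omega \in \Omega$ and $i \in \mathcal{V}$, and then show that $(\boldsymbol{\bar{y}}, \boldsymbol{\bar{z}})$ remains feasible and optimal. This mirrors the argument used to justify Observation \ref{projection_on_z}, except that here I need to invoke the abstract assumptions (i) and (ii) instead of exploiting the specific objective and constraints of \eqref{IMP}.

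First I would verify that $\boldsymbol{\bar{z}}$ satisfies the explicit constraints \eqref{connectioncons} and \eqref{zbincons}. Since $\boldsymbol{\bar{y}} \in \{0,1\}^{|\mathcal{V}|}$, the sum $\sum_{j\in\mathcal{R}(\mathcal{G}^{\omega},i)}\bar{y}_j$ is a nonnegative integer, so $\bar{z}_i^{\omega} \in \{0,1\}$, giving \eqref{zbincons}. The inequality $\bar{z}_i^{\omega} \leq \sum_{j\in\mathcal{R}(\mathcal{G}^{\omega},i)}\bar{y}_j$ is immediate from the definition of the minimum, giving \eqref{connectioncons}. Next I would compare $\boldsymbol{\bar{z}}$ with $\boldsymbol{\hat{z}}$ componentwise: feasibility of $\boldsymbol{\hat{z}}$ under \eqref{connectioncons} and \eqref{zbincons} yields $\hat{z}_i^{\omega} \leq \sum_{j\in\mathcal{R}(\mathcal{G}^{\omega},i)}\bar{y}_j$ and $\hat{z}_i^{\omega} \leq 1$, so $\hat{z}_i^{\omega} \leq \bar{z}_i^{\omega}$ for every $\omega$ and $i$.

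With the componentwise inequality $\boldsymbol{\hat{z}} \leq \boldsymbol{\bar{z}}$ in hand, assumption (ii) (up-monotonicity of $\mathcal{Z}$) gives $\boldsymbol{\bar{z}} \in \mathcal{Z}$, so $(\boldsymbol{\bar{y}}, \boldsymbol{\bar{z}})$ is feasible for \eqref{EGP}. Assumption (i) (nondecreasingness of $f$ in $\boldsymbol{z}$) then yields $f(\boldsymbol{\bar{y}}, \boldsymbol{\bar{z}}) \geq f(\boldsymbol{\bar{y}}, \boldsymbol{\hat{z}})$, and since $(\boldsymbol{\bar{y}}, \boldsymbol{\hat{z}})$ is optimal, the reverse inequality also holds, so $(\boldsymbol{\bar{y}}, \boldsymbol{\bar{z}})$ is optimal as well. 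By construction it satisfies \eqref{projection_on_z_equation}.

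There is no real obstacle here; the argument is essentially an abstraction of the proof of Observation \ref{projection_on_z}. The only subtlety worth emphasizing in the write-up is the careful use of both assumptions: (ii) is what preserves membership in $\mathcal{Z}$ after we raise the $z$-values to the pointwise maximum permitted by \eqref{connectioncons} and \eqref{zbincons}, while (i) is what guarantees that this raising does not hurt the objective. Without either, one could not conclude optimality of the modified solution.
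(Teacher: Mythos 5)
Your proposal is correct and follows essentially the same approach as the paper's proof: both rely on assumption (ii) to keep the raised $\boldsymbol{z}$ inside $\mathcal{Z}$ and on assumption (i) to conclude the objective does not decrease. The only difference is presentational — the paper raises one violating coordinate $\bar{z}^{\omega_0}_{i_0}$ at a time and recurses, whereas you define the entire target vector $\boldsymbol{\bar{z}}$ in one shot and compare componentwise; the underlying argument is identical.
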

	\begin{proof}
		Let $(\bar{\boldsymbol{y}},\bar{\boldsymbol{z}})$ be an optimal solution of problem \eqref{EGP}.
		Suppose that  \eqref{projection_on_z_equation} does not hold for some $\omega_0\in\Omega$ and $i_0\in\V$.
		Then we must have  $\bar{z}^{\omega_0}_{i_0}=0$ and $\sum_{j\in\R(\G^{\omega_0},i_0)}\bar{y}_j\geq 1$.
		Setting  $\bar{z}^{\omega_0}_{i_0}:=1$, we obtain a new point  $(\bar{\boldsymbol{y}}, {\boldsymbol{z}'})$.
		By assumption (ii), ${\boldsymbol{z}'} \in \mathcal{Z}$, and by $\sum_{j\in\R(\G^{\omega_0},i_0)}\bar{y}_j\geq 1$,  constraints \eqref{connectioncons} hold at point $(\boldsymbol{\bar{y}}, {\boldsymbol{z}'})$.
		This implies that  $(\bar{\boldsymbol{y}}, {\boldsymbol{z}'})$ is  a  feasible solution of problem \eqref{EGP}.
		Moreover, by assumption (i), $f(\bar{\boldsymbol{y}}, {\boldsymbol{z}'}) \geq f(\bar{\boldsymbol{y}},\bar{\boldsymbol{z}})$, indicating that $(\bar{\boldsymbol{y}}, {\boldsymbol{z}'})$ must also be an optimal solution of problem \eqref{EGP}.
		Recursively using the above argument, the statement follows.
	\end{proof}
	
	By Proposition \ref{theorem: EPG}, if $\mathcal{R}(\mathcal{G}^\omega,i_1) = \mathcal{R}(\mathcal{G}^\eta,i_2)$, we can set $z_{i_1}^\omega:=z_{i_2}^\eta$ in problem \eqref{EGP} (with the two realistic assumptions (i) and (ii)).
	As a result, the proposed SCNA and INA can also be applied to problem \eqref{EGP} to reduce the problem size and improve the solution efficiency.

	\section{Concluding remarks}
	\label{Sect:conclusions}
	In this paper, we proposed two new presolving methods, called the SCNA and INA, and integrated them into the BD algorithm to solve the IMP.
	The SCNA enables to build an SMCLP formulation for the considered problem based on the (potentially) much more compact live-arc graphs, which are obtained by aggregating strongly connected nodes in the original live-arc graphs.
	The INA further reduces the problem size of the SMCLP formulation by aggregating isomorphic nodes among different live-arc graphs.
	We provided a theoretical analysis on two special cases of the IMP to show the strength of the proposed SCNA and INA in reducing the problem size of the SMCLP formulation.
	%In particular, for the IMP built on the one-way bipartite graph $\mathcal{G}_{\rm B}$ under the LTM, through applying the SNA and INA, 
	%we are able to provide a smaller formulation whose problem size is linear with the size of network $\mathcal{G}_{\rm B}$, 
	%and show that this compact formulation is strongly polynomial-time solvable and its LP relaxation is tight.
	%\todo[inline]{SC@WC: I suggest to listing item (i) and (ii) here and change the sentence to\\
	%``we are able to 
	%(i) provide a compact formulation whose problem size is linear with the size of network $\mathcal{G}_{\rm B}$ but independent of the number of the Monte-Carlo sampling; and
	%(ii) show that this compact formulation is strongly polynomial-time solvable and its
	%LP relaxation is tight.''.}
	%For the IMP built on the complete graph $\mathcal{G}_{\rm{C}}$ under the ICM, 
	%we show that when the size of graph is sufficiently large, almost all live-arc graphs are strongly connected, 
	%and the number of constraints in the reduced formulation \eqref{IMP_INA} converges in probability to $2$.
	%\todo[inline]{SC@WC: For more clarity, I suggest to changing the sentence to\\
	%``we show that when the number of nodes in $\G_{\rm{C}}$ tends to infinity, the corresponding live-arc graphs are strongly connected with a probability tending to one, 
	%and the number of variables $\boldsymbol{z}$ in the reduced formulation \eqref{IMP_INA} converges in probability to $1$.''}
	%
	Furthermore, with the SCNA and INA, a (potentially) much faster separation procedure for the Benders optimality cuts is developed, which plays a crucial role in speeding up the BD algorithm. 
	We have performed extensive experiments to analyze the performance impact of the proposed SCNA and INA on solving the IMP with real-world \revv{networks}. 
	Computational results show that the proposed SCNA and INA can effectively reduce the problem size\revv{, speed up the separation of Benders optimality cuts, 
		and (hence) improve the \revv{overall} performance of using the BD algorithm to solve the IMP.}
	%, especially for problems with  {large and  \revv{well-connected} networks and large numbers of scenarios}.
	We also studied a generalization of the IMP and demonstrated that the proposed \revv{SCNA and INA} are applicable to this generalization under some realistic assumptions.

	There still exist some instances where the proposed SCNA and INA cannot effectively reduce the problem size of the SMCLP formulation.
	Indeed, in \revv{Section 4}  of \cite{Chen2022}, we have provided a worst-case example showing that the percentage of the eliminated variables in the SMCLP formulation of the IMP tends to zero with a probability tending to one.
	Consequently, it is interesting to develop more powerful presolving methods for solving the IMP. 
	In addition, it also deserves to investigate
	%the computational performance of the proposed presolving methods with different number of samplings, 
	%which could be proportional to the size of the considered network.
	whether the proposed presolving methods are computationally effective in solving other variants of the IMP \cite{Long2011,nguyen2013budgeted,Wu2019,Zhang2014}.\\[5pt]
	\section*{Acknowledgments} 
	The works of S.-J. Chen and Y.-H. Dai were supported in part by  the National Natural Science Foundation of China (Nos.  12021001, 11991021, 11991020, and 11971372), the National Key  R$\&$D Program of China (Nos. 2021YFA1000300 and 2021YFA1000301), and the Strategic Priority Research Program of Chinese Academy of Sciences (No. XDA27000000).
	The work of W.-K. Chen was supported in part by the National Natural Science Foundation of China (No. 12101048) and Beijing Institute of Technology Research Fund Program for Young Scholars.
	The work of J.-H. Yuan and H.-S. Zhang were supported in part by the National Natural Science Foundation of China (No. 12171052).

	\renewcommand{\refname}{\normalsize References}\small
	\bibliographystyle{abbrvnat}
	\bibliography{socialnetwork}

\begin{thebibliography}{61}
\providecommand{\natexlab}[1]{#1}
\providecommand{\url}[1]{\texttt{#1}}
\expandafter\ifx\csname urlstyle\endcsname\relax
  \providecommand{\doi}[1]{doi: #1}\else
  \providecommand{\doi}{doi: \begingroup \urlstyle{rm}\Url}\fi

\bibitem[Achterberg(2007)]{Achterberg2007}
T.~Achterberg.
\newblock \emph{{Constraint Integer Programming}}.
\newblock Ph.{D}. thesis, Technische Universit\"{a}t Berlin, 2007.

\bibitem[Achterberg et~al.(2020)Achterberg, Bixby, Gu, Rothberg, and
  Weninger]{Gurobipresolve}
T.~Achterberg, R.~E. Bixby, Z.~Gu, E.~Rothberg, and D.~Weninger.
\newblock Presolve reductions in mixed integer programming.
\newblock \emph{INFORMS Journal on Computing}, 32\penalty0 (2):\penalty0
  473--506, 2020.

\bibitem[Akrouf et~al.(2013)Akrouf, Meriem, Yahia, and
  Eddine]{akrouf2013social}
S.~Akrouf, L.~Meriem, B.~Yahia, and M.~N. Eddine.
\newblock {Social network analysis and information propagation: A case study
  using Flickr and YouTube networks}.
\newblock \emph{International Journal of Future Computer and Communication},
  2\penalty0 (3):\penalty0 246--252, 2013.

\bibitem[Alon et~al.(2012)Alon, Gamzu, and Tennenholtz]{alon2012optimizing}
N.~Alon, I.~Gamzu, and M.~Tennenholtz.
\newblock Optimizing budget allocation among channels and influencers.
\newblock In \emph{Proceedings of the 21st International Conference on World
  Wide Web}, pages 381--388, 2012.

\bibitem[Beineke et~al.(2002)Beineke, Oellermann, and Pippert]{Beineke2002}
L.~W. Beineke, O.~R. Oellermann, and R.~E. Pippert.
\newblock The average connectivity of a graph.
\newblock \emph{Discrete Mathematics}, 252\penalty0 (1):\penalty0 31--45, 2002.

\bibitem[Berardo(2014)]{berardo2014}
R.~Berardo.
\newblock Bridging and bonding capital in two-mode collaboration networks.
\newblock \emph{Policy Studies Journal}, 42\penalty0 (2):\penalty0 197--225,
  2014.

\bibitem[Bornd{\"o}rfer(1998)]{Borndorfer1998}
R.~Bornd{\"o}rfer.
\newblock \emph{Aspects of Set Packing, Partitioning and Covering}.
\newblock Ph.{D}. thesis, Technische Universit\"{a}t Berlin, 1998.

\bibitem[Borrero et~al.(2021)Borrero, Akhgar, and
  Krokhmal]{R3borrero2021scalable}
J.~S. Borrero, M.~Akhgar, and P.~A. Krokhmal.
\newblock A scalable {M}arkov chain framework for influence maximization in
  arbitrary networks.
\newblock \emph{IEEE Transactions on Network Science and Engineering},
  8\penalty0 (3):\penalty0 2372--2387, 2021.

\bibitem[Budak et~al.(2011)Budak, Agrawal, and El~Abbadi]{budak2011limiting}
C.~Budak, D.~Agrawal, and A.~El~Abbadi.
\newblock Limiting the spread of misinformation in social networks.
\newblock In \emph{Proceedings of the 20th International Conference on World
  Wide Web}, pages 665--674, 2011.

\bibitem[Cao and Olvera-Cravioto(2020)]{cao2020connectivity}
J.~Cao and M.~Olvera-Cravioto.
\newblock Connectivity of a general class of inhomogeneous random digraphs.
\newblock \emph{Random Structures \& Algorithms}, 56\penalty0 (3):\penalty0
  722--774, 2020.

\bibitem[Chen et~al.(2023)Chen, Chen, Dai, Yuan, and Zhang]{Chen2022}
S.-J. Chen, W.-K. Chen, Y.-H. Dai, J.-H. Yuan, and H.-S. Zhang.
\newblock {A companion technical report of ``Efficient presolving methods for
  the influence maximization problem''}.
\newblock Technical report, 2023.
\newblock URL
  \url{https://drive.google.com/file/d/1vmgRBBgwp-zs2rCysBQw-JPcDg3YIXa9/view?usp=sharing}.

\bibitem[Chen et~al.(2009)Chen, Wang, and Yang]{HEP}
W.~Chen, Y.~Wang, and S.~Yang.
\newblock Efficient influence maximization in social networks.
\newblock In \emph{Proceedings of the 15th ACM SIGKDD International Conference
  on Knowledge Discovery and Data Mining}, pages 199--208, 2009.

\bibitem[Chen et~al.(2010{\natexlab{a}})Chen, Wang, and Wang]{Chen2010}
W.~Chen, C.~Wang, and Y.~Wang.
\newblock Scalable influence maximization for prevalent viral marketing in
  large-scale social networks.
\newblock In \emph{Proceedings of the 16th ACM SIGKDD International Conference
  on Knowledge Discovery and Data Mining}, pages 1029--1038,
  2010{\natexlab{a}}.

\bibitem[Chen et~al.(2010{\natexlab{b}})Chen, Yuan, and
  Zhang]{Chen2010scalable}
W.~Chen, Y.~Yuan, and L.~Zhang.
\newblock Scalable influence maximization in social networks under the linear
  threshold model.
\newblock In \emph{Proceedings of the 10th IEEE International Conference on
  Data Mining}, pages 88--97, 2010{\natexlab{b}}.

\bibitem[Chen et~al.(2013)Chen, Lakshmanan, and Castillo]{R1Chen2013}
W.~Chen, L.~V. Lakshmanan, and C.~Castillo.
\newblock Information and influence propagation in social networks.
\newblock \emph{Synthesis Lectures on Data Management}, 5\penalty0
  (4):\penalty0 1--177, 2013.

\bibitem[Cheng et~al.(2013)Cheng, Shen, Huang, Zhang, and Cheng]{Cheng2013}
S.~Cheng, H.~Shen, J.~Huang, G.~Zhang, and X.~Cheng.
\newblock Static{G}reedy: Solving the scalability-accuracy dilemma in influence
  maximization.
\newblock In \emph{Proceedings of the 22nd ACM International Conference on
  Information and Knowledge Management}, pages 509--518, 2013.

\bibitem[Cheung and Bell(2021)]{Cheung2021}
K.-F. Cheung and M.~G. Bell.
\newblock Improving connectivity of compromised digital networks via algebraic
  connectivity maximisation.
\newblock \emph{European Journal of Operational Research}, 294\penalty0
  (1):\penalty0 353--364, 2021.

\bibitem[Church(2003)]{Church2003}
R.~L. Church.
\newblock {COBRA: A new formulation of the classic $p$-median location
  problem}.
\newblock \emph{Annals of Operations Research}, 122\penalty0 (1):\penalty0
  103--120, 2003.

\bibitem[{CPLEX}(2022)]{Cplex}
{CPLEX}.
\newblock \url{https://www.ibm.com/analytics/cplex-optimizer}.
\newblock 2022.

\bibitem[Detering et~al.(2019)Detering, Meyer-Brandis, and
  Panagiotou]{detering2019Boot}
N.~Detering, T.~Meyer-Brandis, and K.~Panagiotou.
\newblock Bootstrap percolation in directed and inhomogeneous random graphs.
\newblock \emph{The Electronic Journal of Combinatorics}, 26\penalty0
  (3):\penalty0 1--43, 2019.

\bibitem[Domingos and Richardson(2001)]{domingos2001mining}
P.~Domingos and M.~Richardson.
\newblock Mining the network value of customers.
\newblock In \emph{Proceedings of the 7th ACM SIGKDD International Conference
  on Knowledge Discovery and Data Mining}, pages 57--66, 2001.

\bibitem[Dreyer~Jr and Roberts(2009)]{R2dreyer2009irreversible}
P.~A. Dreyer~Jr and F.~S. Roberts.
\newblock Irreversible $k$-threshold processes: Graph-theoretical threshold
  models of the spread of disease and of opinion.
\newblock \emph{Discrete Applied Mathematics}, 157\penalty0 (7):\penalty0
  1615--1627, 2009.

\bibitem[Ergün(2002)]{ERGUN2002}
G.~Ergün.
\newblock Human sexual contact network as a bipartite graph.
\newblock \emph{Physica A: Statistical Mechanics and its Applications},
  308\penalty0 (1):\penalty0 483--488, 2002.

\bibitem[Fischetti et~al.(2018)Fischetti, Kahr, Leitner, Monaci, and
  Ruthmair]{Fischetti2018}
M.~Fischetti, M.~Kahr, M.~Leitner, M.~Monaci, and M.~Ruthmair.
\newblock Least cost influence propagation in (social) networks.
\newblock \emph{Mathematical Programming}, 170\penalty0 (1):\penalty0 293--325,
  2018.

\bibitem[Galhotra et~al.(2016)Galhotra, Arora, and Roy]{Galhotra2016}
S.~Galhotra, A.~Arora, and S.~Roy.
\newblock Holistic influence maximization: Combining scalability and efficiency
  with opinion-aware models.
\newblock In \emph{Proceedings of the 2016 ACM SIGMOD International Conference
  on Management of Data}, pages 743--758, 2016.

\bibitem[Graham and Pike(2008)]{graham2008note}
A.~J. Graham and D.~A. Pike.
\newblock A note on thresholds and connectivity in random directed graphs.
\newblock \emph{Atlantic Electronic Journal of Mathematics}, 3\penalty0
  (1):\penalty0 1--5, 2008.

\bibitem[G{\"u}ney(2019)]{Guney2019}
E.~G{\"u}ney.
\newblock An efficient linear programming based method for the influence
  maximization problem in social networks.
\newblock \emph{Information Sciences}, 503:\penalty0 589--605, 2019.

\bibitem[G{\"u}ney et~al.(2021)G{\"u}ney, Leitner, Ruthmair, and
  Sinnl]{Guney2020}
E.~G{\"u}ney, M.~Leitner, M.~Ruthmair, and M.~Sinnl.
\newblock Large-scale influence maximization via maximal covering location.
\newblock \emph{European Journal of Operational Research}, 289\penalty0
  (1):\penalty0 144--164, 2021.

\bibitem[G{\"u}nne{\c{c}} et~al.(2020)G{\"u}nne{\c{c}}, Raghavan, and
  Zhang]{Gunnec2020}
D.~G{\"u}nne{\c{c}}, S.~Raghavan, and R.~Zhang.
\newblock Least-cost influence maximization on social networks.
\newblock \emph{INFORMS Journal on Computing}, 32\penalty0 (2):\penalty0
  289--302, 2020.

\bibitem[Hatano et~al.(2016)Hatano, Fukunaga, and
  Kawarabayashi]{hatano2016adaptive}
D.~Hatano, T.~Fukunaga, and K.-i. Kawarabayashi.
\newblock Adaptive budget allocation for maximizing influence of
  advertisements.
\newblock In \emph{Proceedings of the 25th International Joint Conference on
  Artificial Intelligence}, pages 3600--3608, 2016.

\bibitem[He et~al.(2012)He, Song, Chen, and Jiang]{he2012influence}
X.~He, G.~Song, W.~Chen, and Q.~Jiang.
\newblock Influence blocking maximization in social networks under the
  competitive linear threshold model.
\newblock In \emph{Proceedings of the 2012 SIAM International Conference on
  Data Mining}, pages 463--474, 2012.

\bibitem[Heinz et~al.(2013)Heinz, Schulz, and Beck]{Heinz2013}
S.~Heinz, J.~Schulz, and J.~C. Beck.
\newblock Using dual presolving reductions to reformulate cumulative
  constraints.
\newblock \emph{Constraints}, 18\penalty0 (2):\penalty0 166--201, 2013.

\bibitem[Jo et~al.(2016)Jo, Jung, Moon, and Kim]{R5jo2016influence}
K.~Jo, I.~Jung, J.~H. Moon, and S.~Kim.
\newblock Influence maximization in time bounded network identifies
  transcription factors regulating perturbed pathways.
\newblock \emph{Bioinformatics}, 32\penalty0 (12):\penalty0 i128--i136, 2016.

\bibitem[Kahr et~al.(2021)Kahr, Leitner, Ruthmair, and Sinnl]{Kahr2020}
M.~Kahr, M.~Leitner, M.~Ruthmair, and M.~Sinnl.
\newblock Benders decomposition for competitive influence maximization in
  (social) networks.
\newblock \emph{Omega}, 100:\penalty0 102264, 2021.

\bibitem[Kahr et~al.(2022)Kahr, Leitner, and Ljubi{\'c}]{kahr2022impact}
M.~Kahr, M.~Leitner, and I.~Ljubi{\'c}.
\newblock The impact of passive social media users in (competitive) influence
  maximization.
\newblock 2022.
\newblock URL
  \url{http://www.optimization-online.org/DB_FILE/2022/01/8777.pdf}.

\bibitem[Kempe et~al.(2003)Kempe, Kleinberg, and Tardos]{Kempe2003}
D.~Kempe, J.~Kleinberg, and {\'E}.~Tardos.
\newblock Maximizing the spread of influence through a social network.
\newblock In \emph{Proceedings of the 9th ACM SIGKDD International Conference
  on Knowledge Discovery and Data Mining}, pages 137--146, 2003.

\bibitem[Kimura and Saito(2006)]{Kimura2006}
M.~Kimura and K.~Saito.
\newblock Tractable models for information diffusion in social networks.
\newblock In \emph{Proceedings of the 10th European Conference on Principles of
  Data Mining and Knowledge Discovery}, pages 259--271, 2006.

\bibitem[Kleywegt et~al.(2002)Kleywegt, Shapiro, and Homem-de
  Mello]{Kleywegt2002}
A.~J. Kleywegt, A.~Shapiro, and T.~Homem-de Mello.
\newblock The sample average approximation method for stochastic discrete
  optimization.
\newblock \emph{SIAM Journal on Optimization}, 12\penalty0 (2):\penalty0
  479--502, 2002.

\bibitem[Leskovec and Mcauley(2012)]{TWITTER}
J.~Leskovec and J.~Mcauley.
\newblock Learning to discover social circles in ego networks.
\newblock In \emph{Proceedings of the 26th the Annual Conference on Neural
  Information Processing}, pages 539--547, 2012.

\bibitem[Leskovec et~al.(2007)Leskovec, Krause, Guestrin, Faloutsos,
  VanBriesen, and Glance]{Leskovec2007}
J.~Leskovec, A.~Krause, C.~Guestrin, C.~Faloutsos, J.~VanBriesen, and
  N.~Glance.
\newblock Cost-effective outbreak detection in networks.
\newblock In \emph{Proceedings of the 13th ACM SIGKDD International Conference
  on Knowledge Discovery and Data Mining}, pages 420--429, 2007.

\bibitem[Leskovec et~al.(2009)Leskovec, Lang, Dasgupta, and Mahoney]{ENRON}
J.~Leskovec, K.~J. Lang, A.~Dasgupta, and M.~W. Mahoney.
\newblock Community structure in large networks: Natural cluster sizes and the
  absence of large well-defined clusters.
\newblock \emph{Internet Mathematics}, 6\penalty0 (1):\penalty0 29--123, 2009.

\bibitem[Leskovec et~al.(2010)Leskovec, Huttenlocher, and Kleinberg]{EPI2}
J.~Leskovec, D.~Huttenlocher, and J.~Kleinberg.
\newblock Signed networks in social media.
\newblock In \emph{Proceedings of the 28th SIGCHI Conference on Human Factors
  in Computing Systems}, pages 1361--1370, 2010.

\bibitem[Li et~al.(2017)Li, Wang, Gao, and Zhang]{li2017}
M.~Li, X.~Wang, K.~Gao, and S.~Zhang.
\newblock A survey on information diffusion in online social networks: Models
  and methods.
\newblock \emph{Information}, 8\penalty0 (4):\penalty0 118, 2017.

\bibitem[Li et~al.(2019)Li, Smith, Dinh, and Thai]{Li2019}
X.~Li, J.~D. Smith, T.~N. Dinh, and M.~T. Thai.
\newblock {TipTop: (Almost) exact solutions for influence maximization in
  billion-scale networks}.
\newblock \emph{IEEE/ACM Transactions on Networking}, 27\penalty0 (2):\penalty0
  649--661, 2019.

\bibitem[Li et~al.(2018)Li, Fan, Wang, and Tan]{Li2018}
Y.~Li, J.~Fan, Y.~Wang, and K.-L. Tan.
\newblock Influence maximization on social graphs: A survey.
\newblock \emph{IEEE Transactions on Knowledge and Data Engineering},
  30\penalty0 (10):\penalty0 1852--1872, 2018.

\bibitem[Ljubi{\'c} et~al.(2012)Ljubi{\'c}, Putz, and
  Salazar-Gonz{\'a}lez]{Ljubic2012presolve}
I.~Ljubi{\'c}, P.~Putz, and J.-J. Salazar-Gonz{\'a}lez.
\newblock Exact approaches to the single-source network loading problem.
\newblock \emph{Networks}, 59\penalty0 (1):\penalty0 89--106, 2012.

\bibitem[Long and Wong(2011)]{Long2011}
C.~Long and R.~C.-W. Wong.
\newblock Minimizing seed set for viral marketing.
\newblock In \emph{Proceedings of the 11th IEEE International Conference on
  Data Mining}, pages 427--436, 2011.

\bibitem[Nguyen and Zheng(2013)]{nguyen2013budgeted}
H.~Nguyen and R.~Zheng.
\newblock On budgeted influence maximization in social networks.
\newblock \emph{IEEE Journal on Selected Areas in Communications}, 31\penalty0
  (6):\penalty0 1084--1094, 2013.

\bibitem[Panzarasa et~al.(2009)Panzarasa, Opsahl, and Carley]{MSG}
P.~Panzarasa, T.~Opsahl, and K.~M. Carley.
\newblock Patterns and dynamics of users' behavior and interaction: Network
  analysis of an online community.
\newblock \emph{Journal of the American Society for Information Science and
  Technology}, 60\penalty0 (5):\penalty0 911--932, 2009.

\bibitem[Raghavan and Zhang(2019)]{Reghavan2019}
S.~Raghavan and R.~Zhang.
\newblock A branch-and-cut approach for the weighted target set selection
  problem on social networks.
\newblock \emph{INFORMS Journal on Optimization}, 1\penalty0 (4):\penalty0
  304--322, 2019.

\bibitem[Ripeanu and Foster(2002)]{GNU}
M.~Ripeanu and I.~Foster.
\newblock Mapping the {G}nutella network: Macroscopic properties of large-scale
  peer-to-peer systems.
\newblock In \emph{Proceedings of the 1st International Workshop on
  Peer-to-Peer Systems}, pages 85--93, 2002.

\bibitem[Rozemberczki et~al.(2019)Rozemberczki, Davies, Sarkar, and
  Sutton]{FACEBOOK}
B.~Rozemberczki, R.~Davies, R.~Sarkar, and C.~Sutton.
\newblock {GEMSEC: Graph embedding with self clustering}.
\newblock In \emph{Proceedings of the 2019 IEEE/ACM International Conference on
  Advances in Social Networks Analysis and Mining}, pages 65--72, 2019.

\bibitem[Sharir(1981)]{Sharir}
M.~Sharir.
\newblock A strong-connectivity algorithm and its applications in data flow
  analysis.
\newblock \emph{Computers \& Mathematics with Applications}, 7\penalty0
  (1):\penalty0 67--72, 1981.

\bibitem[Sheldon et~al.(2010)Sheldon, Dilkina, Elmachtoub, Finseth, Sabharwal,
  Conrad, Gomes, Shmoys, Allen, Amundsen, and Vaughan]{R4sheldon2012maximizing}
D.~Sheldon, B.~Dilkina, A.~Elmachtoub, R.~Finseth, A.~Sabharwal, J.~Conrad,
  C.~P. Gomes, D.~Shmoys, W.~Allen, O.~Amundsen, and B.~Vaughan.
\newblock Maximizing the spread of cascades using network design.
\newblock In \emph{Proceedings of the 26th Conference on Uncertainty in
  Artificial Intelligence}, pages 517--526, 2010.

\bibitem[Soma et~al.(2014)Soma, Kakimura, Inaba, and
  Kawarabayashi]{soma2014optimal}
T.~Soma, N.~Kakimura, K.~Inaba, and K.-i. Kawarabayashi.
\newblock Optimal budget allocation: Theoretical guarantee and efficient
  algorithm.
\newblock In \emph{Proceedings of the 31st International Conference on Machine
  Learning}, pages 351--359, 2014.

\bibitem[Tang et~al.(2014)Tang, Xiao, and Shi]{Tang2014}
Y.~Tang, X.~Xiao, and Y.~Shi.
\newblock Influence maximization: Near-optimal time complexity meets practical
  efficiency.
\newblock In \emph{Proceedings of the 2014 ACM SIGMOD International Conference
  on Management of Data}, pages 75--86, 2014.

\bibitem[Tang et~al.(2015)Tang, Shi, and Xiao]{Tang2015}
Y.~Tang, Y.~Shi, and X.~Xiao.
\newblock Influence maximization in near-linear time: A martingale approach.
\newblock In \emph{Proceedings of the 2015 ACM SIGMOD International Conference
  on Management of Data}, pages 1539--1554, 2015.

\bibitem[Tuy(2000)]{Tuy2000}
H.~Tuy.
\newblock Monotonic optimization: Problems and solution approaches.
\newblock \emph{SIAM Journal on Optimization}, 11\penalty0 (2):\penalty0
  464--494, 2000.

\bibitem[Wu and K{\"u}{\c{c}}{\"u}kyavuz(2018)]{Wu2017}
H.-H. Wu and S.~K{\"u}{\c{c}}{\"u}kyavuz.
\newblock A two-stage stochastic programming approach for influence
  maximization in social networks.
\newblock \emph{Computational Optimization and Applications}, 69\penalty0
  (3):\penalty0 563--595, 2018.

\bibitem[Wu and K{\"u}{\c{c}}{\"u}kyavuz(2019)]{Wu2019}
H.-H. Wu and S.~K{\"u}{\c{c}}{\"u}kyavuz.
\newblock Probabilistic partial set covering with an oracle for chance
  constraints.
\newblock \emph{SIAM Journal on Optimization}, 29\penalty0 (1):\penalty0
  690--718, 2019.

\bibitem[Zhang et~al.(2014)Zhang, Chen, Sun, Wang, and Zhang]{Zhang2014}
P.~Zhang, W.~Chen, X.~Sun, Y.~Wang, and J.~Zhang.
\newblock Minimizing seed set selection with probabilistic coverage guarantee
  in a social network.
\newblock In \emph{Proceedings of the 20th ACM SIGKDD International Conference
  on Knowledge Discovery and Data Mining}, pages 1306--1315, 2014.

\end{thebibliography}
	
	\appendix
	\titleformat{\section}{\normalfont\large\bfseries}{Appendix \thesection}{1em}{}

	\section{}
	\label{proof_bipartite}
	\section*{Proof of Proposition \ref{theorem:obj}}
	\begin{proof}
		Clearly, if $K \geq |\mathcal{M}|+|\mathcal{N}|$, point $(\boldsymbol{y},\boldsymbol{z})= (\boldsymbol{e},\boldsymbol{e})$ is optimal for formulation \eqref{IMP_obg_presolve} and its LP relaxation, where $\boldsymbol{e}$ is an all-ones vector with appropriate dimension. 
		As a result, the statement follows.
		Therefore, in the following, we consider the case $K < |\mathcal{M}|+|\mathcal{N}|$.
		
		Let $(\bar{\boldsymbol{y}}, \bar{\boldsymbol{z}})$ be an optimal solution of the LP relaxation of formulation \eqref{IMP_obg_presolve}.
		If there exists some $ i_0 \in \mathcal{M}$ and $j_0 \in \mathcal{N}$ such that $\bar{y}_{i_0}< 1$ and $\bar{y}_{j_0} > 0$, then we can construct a new point $(\hat{\boldsymbol{y}}, \hat{\boldsymbol{z}})$ as follows: 
		\begin{itemize}
			\item []
			$\hat{y}_{i_0} := \bar{y}_{i_0} + \varepsilon , ~ 	
			\hat{y}_{j_0} := \bar{y}_{j_0} - \varepsilon, ~\text{and}~ 
			\hat{y}_i := \bar{y}_i~\text{for}~i \in \mathcal{M}\cup \mathcal{N}\backslash\{i_0, j_0\}$;
			\item []
			$\hat{z}_{ij_0} := \max \{\bar{z}_{ij_0} - \varepsilon, 0\}$ for $(i,j_0) \in \mathcal{A}'$ and $\hat{z}_{ij} :=\bar{z}_{ij}$ for $(i,j)\in \mathcal{A}'$ with $j \neq j_0$,
		\end{itemize}
		where $\varepsilon > 0$ is a sufficiently small value.  
		It is easy to see that point $(\hat{\boldsymbol{y}}, \hat{\boldsymbol{z}})$ is also feasible for the LP relaxation of formulation \eqref{IMP_obg_presolve}.
		Moreover, point $(\hat{\boldsymbol{y}}, \hat{\boldsymbol{z}})$ must be optimal since
		\begin{equation*}
			\begin{aligned}
				&  \sum_{i\in\mathcal{M}\cup\mathcal{N}}s_i\hat{y}_i+\sum_{(i,j)\in\mathcal{A}'}c_{ij}\hat{z}_{ij} - 
				\left (\sum_{i\in\mathcal{M}\cup\mathcal{N}}s_i\bar{y}_i+\sum_{(i,j)\in\mathcal{A}'}c_{ij}\bar{z}_{ij}\right )\\
				=&   s_{i_0}(\bar{y}_{i_0}+\varepsilon) +s_{j_0} (\bar{y}_{j_0}-\varepsilon) +  \sum_{i\,:\,(i,j_0)\in \mathcal{A}'}c_{ij_0}\max\{ \bar{z}_{ij_0}-\varepsilon , 0\} \\ 
				&\qquad \qquad \qquad\qquad\qquad~~~~ -s_{i_0}\bar{y}_{i_0}  - s_{j_0} \bar{y}_{j_0}- \sum_{i\,:\,(i,j_0)\in \mathcal{A}'} c_{ij_0} \bar{z}_{ij_0}\\
				=&  s_{i_0}\varepsilon  - s_{j_0} \varepsilon + \sum_{i\,:\,(i,j_0)\in \mathcal{A}'}c_{ij_0}\max\{ -\varepsilon , - \bar{z}_{ij_0}\}  \\
				\geq&  s_{i_0} \varepsilon -s_{j_0}  \varepsilon  - \sum_{i\,:\,(i,j_0)\in \mathcal{A}'} c_{ij_0}\varepsilon=  \varepsilon \left (s_{i_0} -  s_{j_0} - \sum_{i\,:\,(i,j_0)\in \mathcal{A}'} c_{ij_0}\right) =0,
			\end{aligned}
		\end{equation*}
		where the last equality follows from Remark \ref{defsc}.
		Recursively applying the above argument, we will obtain an optimal solution $(\tilde{\boldsymbol{y}},\tilde{\boldsymbol{z}})$ of the LP relaxation of formulation \eqref{IMP_obg_presolve} fulfilling:
		\begin{quote} 
			($\star$) ~~if $\tilde{y}_{j_0}> 0$ for some $ j_0 \in \mathcal{N}$, then $\tilde{y}_i = 1$ for all $i \in \mathcal{M}$ must hold.
		\end{quote}
		Furthermore, we can, without loss of generality, assume the followings on point $(\tilde{\boldsymbol{y}},\tilde{\boldsymbol{z}})$.
		\begin{itemize}
			\item [1)] $\sum_{i \in \mathcal{M}\cup \mathcal{N}} \tilde{y}_i = K $. Otherwise,  we can increase \revv{some} $\tilde{y}_i$, with $\tilde{y}_i < 1$, without decreasing the objective value (as $\sum_{i \in \mathcal{M}\cup \mathcal{N}} \tilde{y}_i  < K < |\mathcal{M}|+|\mathcal{N}|$);
			\item [2)] $\tilde{z}_{ij} =  \min\{ \tilde{y}_i + \tilde{y}_j, 1 \} $ for all $(i,j)\in \mathcal{A}'$. Otherwise, we can increase $\tilde{z}_{ij}$, with $\tilde{z}_{ij} < \min\{ \tilde{y}_i + \tilde{y}_j, 1 \} $, without decreasing the objective value.  
		\end{itemize}
		Together with ($\star$) and 1),  point $(\tilde{\boldsymbol{y}}, \tilde{\boldsymbol{z}})$ must satisfy the followings
		\begin{itemize}
			\item [(i)] if $K \geq  |\mathcal{M}|$, then $\tilde{y}_i = 1$ for all $i \in \mathcal{M}$; and 
			\item [(ii)] if $K < |\mathcal{M}|$, then $\tilde{y}_j = 0$ for all $j \in \mathcal{N}$.
		\end{itemize}
		We next prove the statement in the proposition  by treating cases (i) and (ii) separately.
		
		(i) In this case, $\tilde{y}_i=1$ for all $i\in\mathcal{M}$ and by 2), $ \tilde{z}_{ij}=1 $ for all $(i,j)\in \mathcal{A}'$. Setting $y_i := 1$  for all $i\in\mathcal{M}$ and $ {z}_{ij}:=1 $ for all $(i,j)\in \mathcal{A}'$, the LP relaxation of formulation \eqref{IMP_obg_presolve} reduces to
		\begin{equation}\label{IMP_obj_case1}
			\begin{aligned}
				\max_{\boldsymbol{y}} \left \{\sum_{j\in\mathcal{N}}s_jy_j+|\mathcal{M}|+\sum_{(i,j)\in\mathcal{A}'}c_{ij}\,:\, \sum_{j\in \mathcal{N}}y_j = K-|\mathcal{M}|, ~ y_j \in[0,1] ,~ \forall ~j\in \mathcal{N}\right\}.
			\end{aligned}
		\end{equation}
		Then point $\{ \tilde{y}_j \}_{j\in \mathcal{N}}$ must \revv{be an optimal solution of} formulation \eqref{IMP_obj_case1}.
		On the other hand, suppose that $s_{j_1} \geq \cdots \geq s_{j_{|\mathcal{N}|}} $ where $\{ j_1, \ldots, j_{|\mathcal{N}|} \} = \mathcal{N}$.
		It is easy to show that point $\{ y'_j \}_{j \in \mathcal{N}}$ is also \revv{an optimal solution of} formulation \eqref{IMP_obj_case1} where $y'_{j_\tau}=1$ for $\tau = 1, \ldots, K-|\mathcal{M}| $ and $y'_{j_\tau} = 0$ otherwise.
		We next extend point $\{ y'_j \}_{j \in \mathcal{N}}$ to a higher dimensional point $(\boldsymbol{y}',\boldsymbol{z}')\in \{0,1\}^{|\mathcal{M}|+|\mathcal{N}|}\times \{0,1\}^{|\mathcal{A}'|}$ by additionally setting $y'_i:=1$ for all $i \in \mathcal{M}$ and $z'_{ij}:=1$ for all $(i,j) \in \mathcal{A}'$.
		The 0-1 point $(\boldsymbol{y}',\boldsymbol{z}') $ must be optimal to formulation \eqref{IMP_obg_presolve} and its LP relaxation.
		This implies that the LP relaxation of formulation \eqref{IMP_obg_presolve} is tight and formulation \eqref{IMP_obg_presolve} is strongly \revv{polynomial time} solvable for this case.
		
		(ii) In this case, $\tilde{y}_j=0$ for all $j\in\mathcal{N}$ and by 2), $ \tilde{z}_{ij}=\tilde{y}_i $ for all $(i,j)\in \mathcal{A}'$. 
		Setting $y_i := 0$  for all $i\in\mathcal{N}$ and $ z_{ij}:=y_i $ for all $(i,j)\in \mathcal{A}'$, the LP relaxation of formulation \eqref{IMP_obg_presolve} reduces to
		\begin{equation}\label{IMP_obj_case2}
			\begin{aligned}
				\max_{\boldsymbol{y}} \left \{\sum_{i\in\mathcal{M}} \left (1 + \sum_{j\,:\,(i,j)\in \mathcal{A}'} c_{ij}\right )y_i\,:\,\sum_{i\in \mathcal{M}}y_i = K, ~ y_i \in[0,1] ,~ \forall ~i\in \mathcal{M}\right\}.
			\end{aligned}
		\end{equation}
		Then point $\{ \tilde{y}_i\}_{i\in \mathcal{M}}$ must be \revv{an optimal solution of} formulation \eqref{IMP_obj_case2}.
		Using a similar argument in case (i), we can also prove the statement in this case.
	\end{proof}
	
	\section{}
	\label{proof_scna_BD}
	\section*{Proof of Proposition \ref{bcnotchanged}}
	
	\begin{proof}
		\revv{We first prove case (i).
			Note that for all nodes in a given SCC $\mathcal{SC}_u^{\omega}$ of $\G^{\omega}$ ($u \in \bar{\mathcal{V}}^\omega$), their reachability sets are identical and equal to $\mathcal{R}(\mathcal{G}^\omega,\mathcal{SC}_u^{\omega})$.
			As a result,  $(\hat{\alpha}_{i}^{\omega},
			\hat{\beta}_{i}^{\omega})$ for all $i \in \mathcal{SC}_u^{\omega}$ must also be identical, denoted by $(\hat{\alpha}_{u}^{\omega},\hat{\beta}_{u}^{\omega})$. 
			Hence, inequality \eqref{bc1} can be rewritten as
			\begin{equation}\label{bc3}
				\begin{aligned}
					\varphi^{\omega} \leq& \sum_{u\in\bar{\mathcal{V}}^{\omega}} \left(|\mathcal{SC}_u^{\omega}|\hat{\alpha}_{u}^{\omega}\sum_{j\in\mathcal{R}(\mathcal{G}^\omega,\mathcal{SC}_u^{\omega})}y_j+|\mathcal{SC}_u^{\omega}|\hat{\beta}_{u}^{\omega}\right)\\ 
					=&\sum_{u\in\bar{\mathcal{V}}^{\omega}}\left(|\mathcal{SC}_u^{\omega}|\hat{\alpha}_{u}^{\omega}\sum_{v\in\mathcal{R}(\bar{\mathcal{G}}^\omega,u)} y(\mathcal{SC}_v^\omega)+|\mathcal{SC}_u^{\omega}|\hat{\beta}_{u}^{\omega}\right),
				\end{aligned}
			\end{equation}
			where the equality follows from \eqref{relation_reachset}.
			By \eqref{relation_reachset} and the definitions of  $(\hat{\boldsymbol{\alpha}}^{\omega},\hat{\boldsymbol{\beta}}^{\omega})$ and $(\bar{\boldsymbol{\alpha}}^{\omega},\bar{\boldsymbol{\beta}}^{\omega})$ in \eqref{ab1} and \eqref{ab2}, we must have  $(\bar{\alpha}_u^{\omega},\bar{\beta}_u^{\omega})=(|\mathcal{SC}_u^{\omega}|\hat{\alpha}_{u}^{\omega},|\mathcal{SC}_u^{\omega}|\hat{\beta}_{u}^{\omega})$ for all $u \in \bar{\mathcal{V}}^\omega$.
			Thus, the two Benders optimality cuts \eqref{bc4} and \eqref{bc3} are equivalent.}

		We next prove case (ii). 
		Indeed, for two different scenarios $\omega$ and $\eta$, the corresponding Benders optimality cuts based on formulation \eqref{strongIMP} are given by
		\begin{equation}\label{INAChangebefore1}
			\varphi^{\omega}\leq\sum_{u\in\bar{\mathcal{V}}^{\omega}}\left(\bar{\alpha}_u^{\omega}\sum_{v\in\mathcal{R}(\bar{\mathcal{G}}^{\omega},u)}y(\mathcal{SC}_v^{\omega})+\bar{\beta}_u^{\omega}\right)~\text{and}
		\end{equation}
		\begin{equation}\label{INAChangebefore2}
			\varphi^{\eta}\leq\sum_{u\in\bar{\mathcal{V}}^{\eta}}\left(\bar{\alpha}_u^{\eta}\sum_{v\in\mathcal{R}(\bar{\mathcal{G}}^{\eta},u)}y(\mathcal{SC}_v^{\eta})+\bar{\beta}_u^{\eta}\right).
		\end{equation}
		Suppose that node $u_1$ in graph $\mathcal{\bar{G}}^\omega$ is isomorphic to node $u_2$ in graph $\mathcal{\bar{G}}^{\eta}$. 
		By applying the INA, we can remove, \rev{for example}, variable $z_{u_1}^{\omega}$ and the corresponding reachability constraint in \eqref{strongconnectioncons} from formulation \eqref{strongIMP}.
		The new objective coefficient $f_{u_2}^{\eta}$ is set to the sum of the old objective coefficients of variables $z_{u_1}^{\omega}$ and $z_{u_2}^{\eta}$.
		%, while the new objective coefficient $f_{u_1}^{\omega}$ is set to zero.
		%
		As a result, the term $\bar{\alpha}_{u_1}^{\omega}\sum_{v\in\mathcal{R}(\bar{\mathcal{G}}^{\omega},u_1)}y(\mathcal{SC}_v^{\omega})+\bar{\beta}_{u_1}^{\omega}$ in Benders optimality cut \eqref{INAChangebefore1}
		will be incorporated into Benders optimality cut \eqref{INAChangebefore2}, leading to two new Benders optimality cuts
		\begin{equation}\label{INAChangeafter1}
			\varphi^{\omega}\leq\sum_{u\in\bar{\mathcal{V}}^{\omega}\setminus \{u_1\}}\left(\bar{\alpha}_u^{\omega}\sum_{v\in\mathcal{R}(\bar{\mathcal{G}}^{\omega},u)}y(\mathcal{SC}_v^{\omega})+\bar{\beta}_u^{\omega}\right)~\text{and}
		\end{equation}
		\begin{equation}\label{INAChangeafter2}
			\varphi^{\eta}\leq\sum_{u\in\bar{\mathcal{V}}^{\eta}}\left(\bar{\alpha}_u^{\eta}\sum_{v\in\mathcal{R}(\bar{\mathcal{G}}^{\eta},u)}y(\mathcal{SC}_v^{\eta})+\bar{\beta}_u^{\eta}\right)+\left(\bar{\alpha}_{u_1}^{\omega}\sum_{v\in\mathcal{R}(\bar{\mathcal{G}}^{\omega},u_1)}y(\mathcal{SC}_v^{\omega})+\bar{\beta}_{u_1}^{\omega}\right).
		\end{equation}
		Obviously, the new Benders optimality cuts \eqref{INAChangeafter1} and \eqref{INAChangeafter2} may be different from the old ones \eqref{INAChangebefore1} and \eqref{INAChangebefore2}, respectively.
	\end{proof}
	
\end{document}